\tikzset{cross/.style={cross out, draw=red, minimum size=2*(#1-\pgflinewidth), inner sep=0pt, outer sep=0pt},
cross/.default={1pt}}
\newtheorem{theorem}{Theorem}[section]
\newtheorem{lemma}[theorem]{Lemma}
\newtheorem{proposition}[theorem]{Proposition}
\newtheorem{corollary}[theorem]{Corollary}
\theoremstyle{definition}
\theoremstyle{remark}
\numberwithin{equation}{section}
\renewcommand{\assumption}[2]{\textit{Assumption}\hspace{1pt}#1: #2}
\newcommand{\citelink}[2]{\hyperlink{cite.\therefsection @#1}{\textcolor{blue}{#2}}}
\newcommand{\dd}{\mathop{}\mathopen{}\mathrm{d}}
\newcommand{\cercle}{\mathbb{S}^1}
\newcommand{\Inte}{\int_{\cercle}}
\newcommand{\argmin}[1]{\underset{#1}{\mathrm{argmin}}}
\newcommand{\indices}{\lambda\in \Lambda_m}
\newcommand{\philambda}{\varphi_{\lambda}}
\newcommand{\Proba}{\mathbb P}
\newcommand{\Esp}{\mathbb E}
\DeclareMathOperator{\Var}{Var}
\renewcommand{\tocsection}[3]{%
  \indentlabel{\@ifnotempty{#2}{\bfseries\ignorespaces#1 #2\quad}}\bfseries#3}
\renewcommand{\tocsubsection}[3]{%
  \indentlabel{%
    \hbox to 3em{#2\hfil}%
  }#3%
}
\newcommand\@dotsep{4.5}
\def\@tocline#1#2#3#4#5#6#7{\relax
  \ifnum #1>\c@tocdepth 
  \else
    \par \addpenalty\@secpenalty\addvspace{#2}%
    \begingroup \hyphenpenalty\@M
    \@ifempty{#4}{%
      \@tempdima\csname r@tocindent\number#1\endcsname\relax
    }{%
      \@tempdima#4\relax
    }%
    \parindent\z@ \leftskip#3\relax \advance\leftskip\@tempdima\relax
    \rightskip\@pnumwidth plus1em \parfillskip-\@pnumwidth
    #5\leavevmode\hskip-\@tempdima{#6}\nobreak
    \leaders\hbox{$\m@th\mkern \@dotsep mu\hbox{.}\mkern \@dotsep mu$}\hfill
    \nobreak
    \hbox to\@pnumwidth{\@tocpagenum{\ifnum#1=1\bfseries\fi#7}}\par
    \nobreak
    \endgroup
  \fi}
\renewcommand\csname r@tocindent0\endcsname{0pt}
\def\l@subsection{\@tocline{2}{0pt}{2.5pc}{5pc}{}}
\def\l@subsubsection{\@tocline{2}{0pt}{3.5pc}{5pc}{}}
\begin{document}



\title[Density estimation for censored circular data]{Nonparametric optimal density estimation for censored circular data}

\author[N. Conanec, C. Lacour, T. M. Pham Ngoc]{Nicolas CONANEC$^{1*}$, Claire LACOUR$^2$ and Thanh Mai PHAM NGOC$^1$}

\address{$^{1}$  LAGA (UMR 7539), Universit\'e Sorbonne Paris Nord, Institut Galil\'ee, Villetaneuse, France.}
\email{\textcolor[rgb]{0.00,0.00,0.84}{\{conanec, phamngoc\}@math.univ-paris13.fr}}

\address{$^{2}$ Univ Gustave Eiffel, Univ Paris Est Creteil, CNRS, LAMA UMR8050 F-77447 Marne-la-Vallée, France.}
\email{\textcolor[rgb]{0.00,0.00,0.84}{claire.lacour@univ-eiffel.fr}}

\begin{abstract}
We consider the problem of estimating the probability density function of a circular random variable observed under censoring. To this end, we introduce a projection estimator constructed via a regression  approach on linear sieves. We first establish a lower bound for the mean integrated squared error in the case of Sobolev densities, thereby identifying the minimax rate of convergence for this estimation problem. We then derive a matching upper bound for the same risk, showing that the proposed estimator attains the minimax rate when the underlying density belongs to a Sobolev class. Finally, we develop a data-driven version of the procedure that preserves this optimal rate, thus yielding an adaptive estimator. The practical performance of the method is demonstrated through simulation studies.
\newline
\newline
\noindent \textit{Keywords.} Adaptive estimation, Censoring model, Directional data,  Nonparametric estimator, Optimal estimation, Penalized contrast
\newline
\noindent \textit{2020 Mathematics Subject Classification.} 62N01; 62H11; 62G07; 62R30
\end{abstract} 

\maketitle

\tableofcontents


\section{Introduction}\setcounter{equation}{0}

\vphantom{\cite{MardiaJupp} \cite{Jammalamadaka-Book} \cite{Kaplan-Meier} \cite{Turnbull-1974} \cite{Jammalamadaka-2009} \cite{Conanec-2025} \cite{Comte-Genon-2020} \cite{Baudry-2012} \cite{Baraud-2002} \cite{Talagrand} \cite{Klein-Rio} \cite{Tsybakov} \cite{Ley-Verdebout} \cite{Diamond-McDonald-1992} \cite{Wellner-1995} \cite{vanDerLaan-Jewell-1995} \cite{Bhattacharyya-1985} \cite{Efromovich-2021} \cite{Alotaibi-2025} \cite{Pewsey-GarciaPortuge-2021}  }

Directional statistics concerns the analysis of random variables taking values as directions rather than on the real line.  In this paper we focus on circular data which are encountered in various scientific fields, such as biology (directions of animal migration), bioinformatics (protein conformational angles), geology (rock fracture orientations), medicine (circadian rhythms), forensics (crime timing), and the social sciences (time-of-day or calendar effects). Comprehensive surveys of statistical methods for circular data can be found in \citelink{MardiaJupp}{Mardia \& Jupp (2000)}, \citelink{Jammalamadaka-Book}{Jammalamadaka \& Sengupta (2001)}, \citelink{Ley-Verdebout}{Ley \& Verdebout (2017)}, and recent advances are compiled in \citelink{Pewsey-GarciaPortuge-2021}{Pewsey and Garc\'ia-Portugu\'es  (2021)}.

In many statistical applications, full observations are unavailable due to censoring, where only partial information about the variable of interest is observed. Censoring problems have been extensively studied on the real line, and several censoring schemes have been proposed, each providing a distinct level of information. 
Classical contributions include the Kaplan-Meier estimator (\citelink{Kaplan-Meier}{Kaplan \& Meier ('58)}), which provides a nonparametric maximum likelihood estimator of the survival function under right censoring. Subsequent developments have addressed more complex censoring structures, including doubly censored  data (\citelink{Turnbull-1974}{Turnbull ('74)}), the current status model (\citelink{Diamond-McDonald-1992}{Diamond \& McDonald ('92)}) and interval censoring case 2  (\citelink{Wellner-1995}{Wellner ('95)}). Further extensions considered Type II censoring (\citelink{Bhattacharyya-1985}{Bhattacharyya ('85)} and its modern variants \citelink{Alotaibi-2025}{Alotaibi et al. (2025)}). Recent advances include sharp adaptive minimax results for the current status setting (\citelink{Efromovich-2021}{Efromovich (2021)}) and extensions to doubly censored models (\citelink{vanDerLaan-Jewell-1995}{van der Laan \& Jewell ('95)}).

When considering censored estimation problems on the circle, most censoring mechanisms defined on the real line cannot be transposed in a mathematically consistent way. To the best of our knowledge, the only well-posed circular censoring model available in the literature is that introduced by \citelink{Jammalamadaka-2009}{Jammalamadaka \& Mangalam (2009)} in the context of cumulative distribution function estimation. The present work aims to bridge this challenging gap by developing a framework for nonparametric density estimation of circular data subject to censoring.

This paper substantially extends and refines the contribution of \citelink{Conanec-2025}{Conanec (2025)}  who introduced an estimator of the probability density function of a circular random variable observed under censoring. We first establish the minimax lower bound for the mean integrated squared error (MISE) in this circular censoring framework. We then propose a new estimation method that directly targets the density $f$ and achieves both minimax optimality and adaptivity. In contrast to \citelink{Conanec-2025}{Conanec (2025)}, where $f$ was constructed as a quotient estimator entailing theoretical and practical drawbacks, our approach is based on a direct projection method that overcomes these limitations. To this end, we introduce a least-squares type contrast function $\zeta$ defined via a new scalar product, namely the Euclidean inner product on $\cercle$, weighted by a function $\sigma$ that reflects the censoring mechanism. The projection estimator on the trigonometric space $S_m$ with dimension $D_m=2m+1$ is then given by
 $\mathring{f}_m =  \argmin{t\in S_m} \hspace{3pt} \zeta(t)$ and admits a fully explicit expression.  We then show that the MISE of our estimator attains the minimax rate when $f$ belongs to a Sobolev class, proving its optimality. To handle the realistic case where the smoothness of $f$ is unknown, we design a data-driven selection procedure based on a penalized criterion on $m$. The resulting adaptive estimator $\hat{f}_{\hat{m}}$ satisfies an oracle inequality and preserves the optimal convergence rate. Thus, our method provides an estimator that is both theoretically optimal and fully adaptive. Finally, we complement our theoretical results with numerical experiments that demonstrate the effectiveness and robustness of this new  approach.

The article is organized as follows. In Section~\ref{SectionModele}, we describe the specific features of the circular framework and discuss how the censoring mechanism affects the available information. Section~\ref{SectionBorneInf} establishes the minimax lower bound for the considered estimation problem. In Section~\ref{SectionResultats}, we construct the proposed estimator, derive its rate of convergence in a classical setting, and present a data-driven selection procedure that yields an adaptive version without deteriorating the convergence rate. Section~\ref{SectionSimulations} reports numerical experiments illustrating the performance of our estimator under various scenarios, and all proofs are collected in Section~\ref{sec:proofs}.
\\

\section{Definition of the model}\setcounter{equation}{0}\label{SectionModele}

\subsection{Circular context}

We begin by addressing the specific challenges arising from the circular nature of the data. In particular, since we work under an interval censoring model, a natural question is how to define whether a point belongs to an interval on the circle. More generally, can the usual partial order on $\mathbb{R}$ be meaningfully extended to the unit circle $\mathbb{S}^1$? Without additional structure, such an extension is not possible: for instance, it is meaningless to decide whether $\frac{\pi}{2}$ is greater than $\frac{3\pi}{2}$. To resolve this ambiguity, we adopt the following convention. We fix an initial direction, denoted by $0$, and choose an orientation on the circle.  We choose for the rest of the article the anticlockwise orientation.
Each angle is then represented by its unique equivalent in $[0, 2\pi)$ modulo $2\pi$. Under these conventions, the circle $\cercle$ can be represented by the interval $[0, 2\pi)$, and each angle corresponds to a point within that interval. We then use the standard order on $[0, 2\pi)$ as our circular order.

A further question arises when defining intervals between two points on $\mathbb{S}^1$. On the real line, two points determine a unique interval; on the circle, however, they define two complementary arcs whose union covers the entire circle.  Accordingly, throughout the paper, when we write an interval $[x, y]$ on the circle, we refer to the set of points obtained by moving from $x$ to $y$ along the chosen orientation. In other words, for any 
$(x, y)\in\left(\mathbb{S}^1\right)^2, \mathbb{S}^1 = [x, y)\sqcup [y, x)$.
Thus if we have $x > y$ two points of the interval, $[x, y]$ is the set $[0, 2\pi) \backslash (y, x)$, i.e $[x, y] = [x, 2\pi)\sqcup [0, y]$, and $(y, x)$ is the classic interval as we know it. 
In Figure~\ref{fig:intervalle_circulaire_cercle_et_r} we can see two circles with intervals $\left[ \frac{\pi}{3}, \frac{\pi}{6} \right]$ and $\left[\frac{\pi}{6}, \frac{\pi}{3} \right]$ highlighted and the same intervals on the real line.

\begin{figure}[ht]

\begin{center}
\begin{tikzpicture}[scale=1.5]

  \begin{scope}[xshift=-2cm, yshift=1.6cm]
    \draw[red, thick] (0,0) circle(1);
    
    \foreach \angle/\label in {
        0/{$0$},
        90/{$\frac{\pi}{2}$},
        180/{$\pi$},
        270/{$\frac{3\pi}{2}$}
    } {
      \draw (\angle:1.2) node {\label};
      \draw (-1.05,0) -- (-0.95,0);
      \draw (1.05,0) -- (0.95,0);
      \draw (0,-1.05) -- (0,-0.95);
      \draw (0,1.05) -- (0,0.95);
    }

    \node at (0,0) {+};

    \draw[black, thick] (20:1) arc (20:60:1);
    \fill[red] (20:1) circle(0.025);
    \fill[red] (60:1) circle(0.025);
    
    \foreach \a in {70, 80,...,350} {
      \draw[red, thick] (\a:1) node[cross=2.1pt, rotate=\a] {};
    }
    \foreach \a in {0, 10, 20} {
      \draw[red, thick] (\a:1) node[cross=2.1pt, rotate=\a] {};
    }
  \end{scope}

  \begin{scope}[xshift=-2cm, yshift=-1.6cm]
    \draw[black, thick] (0,0) circle(1);
    
    \foreach \angle/\label in {
        0/{$0$},
        90/{$\frac{\pi}{2}$},
        180/{$\pi$},
        270/{$\frac{3\pi}{2}$}
    } {
      \draw (\angle:1.2) node {\label};
      \draw (-1.05,0) -- (-0.95,0);
      \draw (1.05,0) -- (0.95,0);
      \draw (0,-1.05) -- (0,-0.95);
      \draw (0,1.05) -- (0,0.95);
    }

    \node at (0,0) {+};

    \draw[red, thick] (20:1) arc (20:60:1);
    \fill[red] (20:1) circle(0.025);
    \fill[red] (60:1) circle(0.025);
    
    \foreach \a in {30, 40, 50} {
      \draw[red, thick] (\a:1) node[cross=2.1pt, rotate=\a] {};
    }
  \end{scope}
  
  \begin{scope}[xshift=2cm, yshift=1.6cm]
  
  \draw[red, thick] (0,0) -- (3.14,0);
  
   \foreach \angle/\label in {
        {0*3.14/360}/{$0$},
        {90*3.14/360}/{$\frac{\pi}{2}$},
        {180*3.14/360}/{$\pi$},
        {270*3.14/360}/{$\frac{3\pi}{2}$},
        {360*3.14/360}/{$2\pi$}
    } {
      \draw (\angle,0.5) node {\label};
      \draw (\angle,0.05) -- (\angle,-0.05);
    }
    
    
    \draw[black, thick] (20*3.14/360,0) -- (60*3.14/360,0);
    \fill[red] (20*3.14/360,0) circle(0.025);
    \fill[red] (60*3.14/360,0) circle(0.025);
    
    \foreach \a in {70,80,...,360} {
    \pgfmathsetmacro{\x}{\a*3.14/360}
      \draw[red, thick] (\x,0) node[cross=1.7pt] {};
    }
    \foreach \a in {0, 10} {
    \pgfmathsetmacro{\x}{\a*3.14/360}
      \draw[red, thick] (\x,0) node[cross=1.7pt] {};
    }
    
  \end{scope}
  
   \begin{scope}[xshift=2cm, yshift=-1.6cm]
  
  \draw[black, thick] (0,0) -- (3.14,0);
  
   \foreach \angle/\label in {
        {0*3.14/360}/{$0$},
        {90*3.14/360}/{$\frac{\pi}{2}$},
        {180*3.14/360}/{$\pi$},
        {270*3.14/360}/{$\frac{3\pi}{2}$},
        {360*3.14/360}/{$2\pi$}
    } {
      \draw (\angle,0.5) node {\label};
      \draw (\angle,0.05) -- (\angle,-0.05);
    }
    
    
    \draw[red, thick] (20*3.14/360,0) -- (60*3.14/360,0);
    \fill[red] (30*3.14/360,0) circle(0.025);
    \fill[red] (60*3.14/360,0) circle(0.025);
    
    \foreach \a in {30, 40, 50} {
    \pgfmathsetmacro{\x}{\a*3.14/360}
      \draw[red, thick] (\x,0) node[cross=1.7pt] {};
    }
    
  \end{scope}

\end{tikzpicture}
\end{center}
\caption{On the top left is the circular interval $\left[ \frac{\pi}{3}, \frac{\pi}{6} \right]$ on $\cercle$ and on the top right on  $\mathbb{R}$. On the bottom left is the circular interval $\left[ \frac{\pi}{6}, \frac{\pi}{3} \right]$ on $\cercle$ and on the bottom right on $\mathbb{R}$.}
  \label{fig:intervalle_circulaire_cercle_et_r}
\end{figure}
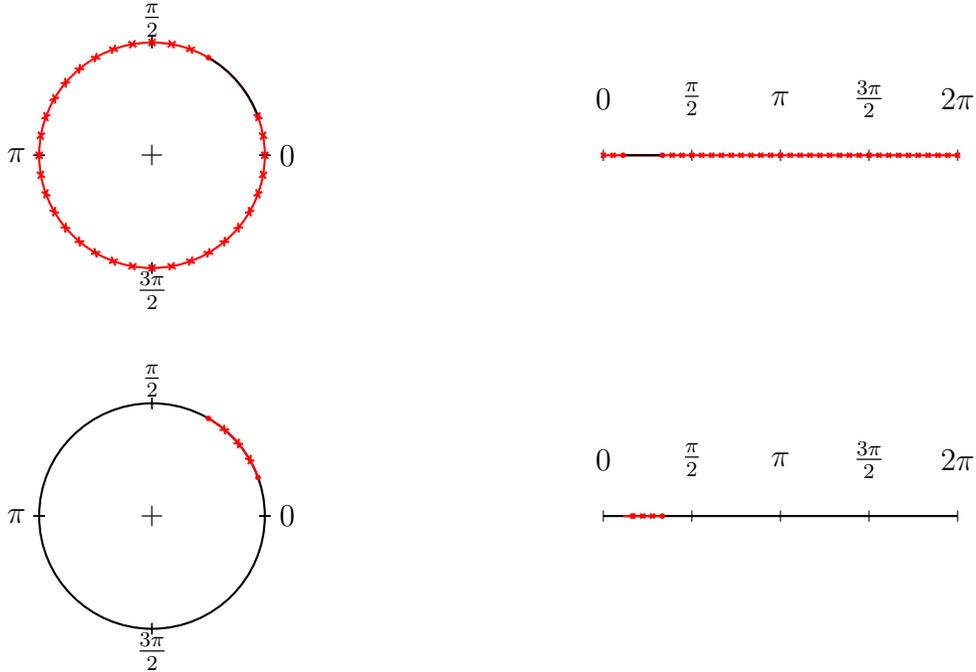

\subsection{The circular censor model}

Set $(\Omega, \mathcal{F},\Proba)$ a probability space. We consider functions in $\mathbb{L}^2(\cercle)$, endowed with the usual scalar product $<g,h>_2 = \Inte g(x) h(x) \dd x$ and the associated norm will be written $\|\cdot\|_2$.
Let $(X_1, L_1, U_1),$ $ \dots , (X_n, L_n, U_n)$ be a sample of the triplet $(X, L, U)$ where $(L, U)$ are the censor elements and $X$ is the variable of interest, and the sequences $(L_i, U_i)_{1\leq i \leq n}$ and $(X_i)_{1\leq i \leq n}$ are independent. 
The censoring mechanism operates as follows:  we observe $X_i$ exactly when $X_i \in [L_i, U_i]$, and otherwise $X_i$ is unobserved, in which case we only retain the information contained in the couple $(L_i, U_i)$. Because of this structure, we refer to $[L_i, U_i]$ as the window of observation, and to $(U_i, L_i)$ as the censoring arc.
In the case of a censored observation, we assign the observation to an arbitrary point outside the interval $[0, 2\pi]$; specifically, we set it to $-\pi$.  We suppose that 
\[L \ne U \hspace{2pt} \textit{a.s.}, \]
which guarantees that, almost surely, the variable $X$ may fall within its observation window.
Remark that we only need $L$ and $U$ to be different and not ordered. This is important as $[L,U]$ and $[U,L]$ do not define the same intervals.
We then define
\begin{equation}\nonumber
\left\{
\begin{array}{ll}
X_i'=\left\{
\begin{array}{ll}
        X_i \text{ , if } X_i \in [L_i, U_i] ,\\
        -\pi \text{ , otherwise.}
    \end{array}
\right.\\
\Delta_i= \mathds{1}_{\{X_i\in[L_i, U_i]\}}.
\end{array}
\right.
\end{equation}
So our observations are the sample of triplets $(X_1', L_1, U_1),\dots, (X_n', L_n, U_n)$ and the aim is to build an estimator of the density function $f$ of the circular variable $X$.

\section{Lower bound}\setcounter{equation}{0}\label{SectionBorneInf}

We first establish the minimax risk lower bound associated with this  censored circular density estimation problem. To that end, we consider smoothness classes characterized by Sobolev regularity.
We recall the definition of the Sobolev class $W(\beta, R)$. We define $\alpha_j$ as the following
\begin{equation}\nonumber
\alpha_j=\left\{\begin{array}{ll}
        j^{\beta}, \hspace{25pt}\text{for even }j,\\
        (j+1)^{\beta}, \hspace{0pt}\text{for odd }j.
    \end{array}
\right.
\end{equation}
We can define the Sobolev class $W(\beta, R)$ for $\beta>0$ and $R>0$ as the following set of functions
\begin{equation}\label{Sobolev_Def}
W(\beta,R) = \left\{ g \in \mathbb{L}^2(\cercle), \sum_{j=0}^{+\infty} (\alpha_j^2 |<g,\varphi_j>_2|^2) \hspace{3pt}< \frac{R^2}{\pi^{2\beta}} \right\},
\end{equation}
where $\{ \varphi_j\}_{j\in\mathbb{N}}$ is the trigonometric basis of $\mathbb{L}^2(\cercle)$, i.e $ \varphi_0=\frac{1}{\sqrt{2\pi}}$, and for $j\in\mathbb{N}^*, \varphi_{2j-1} = \frac{1}{\sqrt{\pi}}\cos(j\cdot), \varphi_{2j}= \frac{1}{\sqrt{\pi}}\sin(j\cdot)$ .\\

We now present the lower bound on the minimax risk.
\begin{theorem}\label{BorneInf}
Assume $\beta \geq \frac{1}{2}$. The estimation problem we introduced has the following lower bound 

\[ \liminf_{n \rightarrow +\infty} \inf_{\hat{f}}\sup_{f\in W(\beta,R)} \Esp_f\left( n^{\frac{2\beta}{2\beta +1}}\| \hat{f} - f\|_2^2\right) \geq c ,\]
where $\inf_{\hat{f}}$ denotes the infimum over all estimators defined using the sample $(X_1', L_1, U_1$, $\dots,$ $X_n', L_n, U_n)$ and where the constant $c>0$ depends only on $\beta, R$.
\end{theorem}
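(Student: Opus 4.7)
The plan is to derive the lower bound via the classical Fano/Tsybakov reduction scheme, adapted to the censored circular observation model. Concretely, the idea is to exhibit a finite family $\{f_0,(f_\omega)_{\omega\in\Omega}\}\subset W(\beta,R)$ of densities, well separated in $\mathbb L^2(\cercle)$ but inducing observation laws that are close in Kullback--Leibler divergence, and then invoke Theorem~2.5 of \citelink{Tsybakov}{Tsybakov (2009)} with calibrated parameters.

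\textbf{Construction of the hypotheses.} I set $m=\lceil\kappa n^{1/(2\beta+1)}\rceil$ with $\kappa>0$ to be tuned, and $f_0\equiv (2\pi)^{-1}$. For $\omega\in\{0,1\}^m$ I define
\[
f_\omega(x) \;=\; \frac{1}{2\pi} + c_m \sum_{k=m+1}^{2m} \omega_{k-m}\,\varphi_{2k}(x), \qquad c_m\asymp m^{-\beta-1/2},
\]
where $\varphi_{2k}(x)=\sin(kx)/\sqrt\pi$. The calibration of $c_m$ places $f_\omega$ in $W(\beta,R)$ directly through \eqref{Sobolev_Def}, while pointwise positivity is ensured by the bound $c_m\sum_k |\varphi_{2k}|\leq c_m m/\sqrt\pi\asymp m^{1/2-\beta}$: this is exactly where the hypothesis $\beta\geq 1/2$ is used. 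The Varshamov--Gilbert lemma then yields $\Omega\subset\{0,1\}^m$ with $\log|\Omega|\gtrsim m$ and pairwise Hamming distance at least $m/8$, so that orthonormality of the $\varphi_{2k}$ produces the $\mathbb L^2$ separation $\|f_\omega-f_{\omega'}\|_2^2\gtrsim c_m^2 m\asymp m^{-2\beta}\asymp n^{-2\beta/(2\beta+1)}$.

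\textbf{Kullback--Leibler control for the censored model.} Since $(L,U)$ and $X$ are independent, the joint law of a single triplet $(X',L,U)$ under $f$ factors through the fixed density $g$ of $(L,U)$ and a conditional law that mixes a continuous part of density $f$ on $[L,U]$ and an atom at $-\pi$ of mass $1-F([L,U])$. A direct evaluation of the chi-square distance then gives
\[
\chi^2(P_\omega,P_0) \;=\; 2\pi\Inte (f_\omega-f_0)^2(x)\,\sigma(x)\,dx \;+\; \int \frac{\bigl(\int_{[\ell,u]}(f_\omega-f_0)\bigr)^2}{1-F_0([\ell,u])}\,g(\ell,u)\,d(\ell,u),
\]
where $\sigma(x)=\Proba(x\in[L,U])$. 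The first term is bounded by $2\pi\|f_\omega-f_0\|_2^2$. For the second, Cauchy--Schwarz gives $\bigl(\int_{[\ell,u]}(f_\omega-f_0)\bigr)^2\leq 2\pi\int_{[\ell,u]}(f_\omega-f_0)^2$ and, under the natural identifiability assumption that $1-F_0([L,U])$ is bounded below by a positive constant, this contribution is also $\lesssim\|f_\omega-f_0\|_2^2$. Combining $\mathrm{KL}\leq\chi^2$ with tensorization yields $\mathrm{KL}(P_\omega^{\otimes n},P_0^{\otimes n})\lesssim n\,c_m^2\,m\asymp n\,m^{-2\beta}$. With $m\asymp n^{1/(2\beta+1)}$, this is of order $m\asymp\log|\Omega|$, and $\kappa$ may be tuned large enough to bound it by $(1/8)\log|\Omega|$, satisfying Tsybakov's assumption and delivering $n^{2\beta/(2\beta+1)}\inf_{\hat f}\sup_f\Esp_f\|\hat f-f\|_2^2\gtrsim 1$.

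The main technical obstacle lies in the chi-square computation above: the censoring mechanism mixes a continuous likelihood with an atom at $-\pi$, and one must verify that the atomic contribution does not inflate the divergence beyond the $\|f_\omega-f_0\|_2^2$ scale; once this is established, the remainder of the argument follows the standard Fano template, and the constraint $\beta\geq 1/2$ enters only through the positivity check in the hypothesis construction.
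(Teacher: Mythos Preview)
Your approach is essentially the paper's: same Varshamov--Gilbert construction around the uniform density, same chi-square control of the censored observation law, same role for the hypothesis $\beta\geq 1/2$ in the positivity check. The one substantive gap is in your treatment of the atomic part. You write that Cauchy--Schwarz gives $\bigl(\int_{[\ell,u]^c}(f_\omega-f_0)\bigr)^2\leq 2\pi\int(f_\omega-f_0)^2$ and then invoke ``the natural identifiability assumption that $1-F_0([L,U])$ is bounded below by a positive constant'' to control the ratio. But no such assumption is made in the model: only $L\neq U$ a.s.\ is required, so the censoring arc $(U,L)$ may have arbitrarily small length and $1-F_0([L,U])=|[L,U]^c|/(2\pi)$ may be arbitrarily close to zero. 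As written, the bound on the atomic contribution fails.

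The fix is to use Cauchy--Schwarz more sharply, keeping the arc length as a factor rather than replacing it by $2\pi$:
\[
\Bigl(\int_{[\ell,u]^c}(f_\omega-f_0)\Bigr)^2 \;\leq\; |[\ell,u]^c|\int_{[\ell,u]^c}(f_\omega-f_0)^2 \;\leq\; |[\ell,u]^c|\,\|f_\omega-f_0\|_2^2.
\]
Since $f_0$ is uniform, $1-F_0([\ell,u])=|[\ell,u]^c|/(2\pi)$, and the $|[\ell,u]^c|$ cancels exactly in the ratio, giving the clean bound $2\pi\|f_\omega-f_0\|_2^2$ for the atomic term with no extra hypothesis. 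This is precisely what the paper does; once you make this correction the rest of your argument goes through.
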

We can find the proof of this theorem in Section~\ref{LowerBound}.\\

The rate of convergence obtained in Theorem \ref{BorneInf}  corresponds to the optimal convergence rate for estimating a univariate probability density belonging to a Sobolev class of smoothness $\beta$. Interestingly, the censoring mechanism does not affect the rate of convergence, which is a satisfactory feature. A noteworthy observation is that the lower bound itself depends on the regularity of $f$. Consequently, the estimator proposed in \citelink{Conanec-2025}{Conanec (2025)} is not minimax optimal unless $f$ and $f\sigma$ share the same smoothness, where $\sigma$ is defined in \eqref{eq:def_sigma}. Designing an estimation procedure that achieves minimax optimality remains a challenging problem and constitutes the focus of the following section.

%

\section{Estimator and results}\setcounter{equation}{0}\label{SectionResultats}

\subsection{Estimator procedure}

To define the estimation procedure, we first define the following function
\begin{equation}\label{eq:def_sigma}
\sigma : x\in\mathbb{S}^1\mapsto \Proba (x\in [L, U]) =\Inte\Inte \mathds{1}_{\{x\in[l, u]\}} \Proba_{(L, U)}(\dd l, \dd u).
\end{equation}
We make the following assumption on our model.\\
\assumption{(A)}{
There exists a real $\sigma_0>0$ such that, for all $x$ in $\cercle$,\[ \sigma(x)\geq \sigma_0 >0.\]}
This assumption ensures that, with nonzero probability, any point on the circle can fall within an observation window. It provides a theoretical guarantee that every point $x \in \cercle$ is observable and, given a sufficiently large sample, can be estimated by our procedure. Although $\sigma_0$ is an unknown quantity and cannot be used in practice for estimation, it remains useful for theoretical analysis.

We observe that, for any integrable function $t$, 
\begin{align}
\nonumber
\Esp(\Delta_i t(X_i'))&= \Esp(\Delta_i t(X_i))\\ 
\nonumber
&= \Inte\Inte\Inte \mathds{1}_{\{x\in[l, u]\}} t(x) f(x) \Proba_{(L, U)}(\dd l, \dd u) \dd x \\
\nonumber
&= \Inte t(x) f(x) \underbrace{ \Inte\Inte \mathds{1}_{\{x\in[l, u]\}} \Proba_{(L, U)}(\dd l, \dd u)}_{=\sigma(x)} \dd x \\
\nonumber
&= \Inte t(x) f(x)\sigma(x) \dd x\\
\label{eq:Esperance_et_psi}
&= <t, f\sigma>_2 ,
\end{align}
This equation provides a way to construct a new scalar product on $\mathbb{L}^2(\cercle)$, weighted by the function $\sigma$. We write $$<g, h>_{2,\sigma} = \Inte g(x)h(x)\sigma(x) \dd x$$ for any $g, h\in\mathbb{L}^2(\cercle)$. It defines a scalar product on $\mathbb{L}^2(\cercle)$ and we write  $\|\cdot\|_{2,\sigma}$ the norm associated. 
Since $\sigma$ is defined as a probability we consider the following empirical estimator of $\sigma$. We set
\begin{equation}\label{eq:def_hat_sigma}
\hat{\sigma}: x\in\cercle \mapsto \frac{1}{n} \sum_{i=1}^n \mathds{1}_{\{x\in[L_i, U_i]\}}. 
\end{equation}
From this we can define the empirical counterpart of $\|\cdot\|_{2,\sigma}$ and $<\cdot,\cdot>_{2,\sigma}$
We define $$<g,h>_{n,\sigma} = \frac{1}{n} \sum_{i=1}^n \int_{[L_i,U_i]} g(x)h(x) \dd x = \Inte g(x)h(x)\hat{\sigma}(x) \dd x ,$$ and $$\|g\|_{n,\sigma}^2= \Inte g^2(x) \hat{\sigma}(x) \dd x$$ for any $g,h\in\mathbb{L}^2(\cercle)$. The bilinear symmetric positive semi-definite form $<\cdot,\cdot>_{n,\sigma}$ is not a scalar product as $<f,f>_{n,\sigma}$ only guarantees that $f(x)=0$ for $x\in \bigcup_{i=1}^n [L_i,U_i]$, not on all $\cercle$. The same argument tells us that $\|\cdot\|_{n,\sigma}$ is not a norm but a semi-norm.
The bilinear form $<\cdot,\cdot>_{n,\sigma}$ can be seen as an empirical counterpart of the scalar product $<\cdot,\cdot>_{2,\sigma}$ because, for any $g, h\in\mathbb{L}^2(\cercle)$,
\[ \Esp(<g, h>_{n, \sigma}) = <g, h>_{2, \sigma}.\]
To estimate $f$ we define the following empirical contrast function
\[ \zeta(t) := \frac{1}{n}\sum_{i=1}^n \left(\Inte t^2(x)\mathds{1}_{x\in[L_i, U_i]} \dd x - 2\Delta_i t(X_i') \right),\]
for any function $t$ on $\cercle$.
This choice is motivated by the following computation. If we take the expectation of $\zeta(t)$ for a function $t$ we have 
\begin{align}
\nonumber\Esp(\zeta(t)) &= \Esp\left( \Inte t^2(x)\mathds{1}_{x\in[L, U]} \dd x \right) -2\Esp(\Delta t(X'))\\
\nonumber&=\Inte t^2(x)\sigma(x) \dd x -2 \Inte t(x)f(x)\sigma(x) \dd x \\
\label{eq:justif_contraste}&= \|t-f\|_{2, \sigma}^2 - \|f\|_{2, \sigma}^2.
\end{align}
This contrast can be used to define a new estimator for $f$ taking its minimum over a subspace $S_m$ of $\mathbb{L}^2(\cercle)$. 

The subspaces we consider are the trigonometric spaces that we recall now. 
Set $(S_m)_{m\in\mathcal{M}_n}$ to be a collection of linear spaces where $\mathcal{M}_n = \{ 1, \dots,  \lfloor n/2 \rfloor-1\}$ is the set of possible values of $m$.  Each $S_m$ is the subspace generated by $\{\varphi_0=\frac{1}{\sqrt{2\pi}}, \varphi_{2j-1}=\frac{1}{\sqrt{\pi}}\cos(j\cdot), \varphi_{2j}=\frac{1}{\sqrt{\pi}}\sin(j\cdot) | \text{ for } j \in \{ 1,\dots , m \}\}$. Its dimension is $D_m=2m+1$.
Taking $\Phi_0 =\frac{1}{\sqrt{2\pi}}$  the following inequality is easily obtained:
\begin{equation}\label{eq:Linear_Sieves}
\forall m \in \mathcal{M}_n, \forall t\in S_m, \|t\|_{\infty} \leq \Phi_0 \sqrt{D_m} \|t\|_2.
\end{equation}
Moreover, for $m\in\mathcal{M}_n$, if $(\philambda)_{\indices}$, where $|\Lambda_m|=D_m$, is an orthonormal basis of $S_m$, an equivalent version of \eqref{eq:Linear_Sieves} is
\begin{equation}\label{eq:Linear_Sieves2}
\forall m \in \mathcal{M}_n, \|\sum_{\indices} \philambda^2\|_{\infty} \leq \Phi_0^2 D_m.
\end{equation}
We now define a function of interest. We denote by ${}^t M $ the transpose of a vector or  a matrix $M$. For $m\in\mathcal{M}_n$
\begin{equation}\label{eq:estimateur_argmin}
 \mathring{f}_m = \argmin{t\in S_m} \hspace{3pt} \zeta(t) = {}^t \hat{A}_m \overrightarrow{\varphi_m},
 \end{equation}
where $\hat{A}_m := (\hat{a}_{\lambda})_{\indices}$ are the Fourier coefficients of  $\mathring{f}_m$, that is the coefficients of $\mathring{f}_m$ in the trigonometric basis $\overrightarrow{\varphi_m} = (\philambda)_{\indices}$. Thus, thanks to \eqref{eq:justif_contraste} the function $\mathring{f}_m$ that minimizes the contrast $\zeta$ on $S_m$ is likely to minimize on $S_m$ the norm $\|t-f\|_{2, \sigma}^2$ and thus to be a relevant estimator of $f$.\\
Since $S_m$ is a finite dimension subspace of $\mathbb{L}^2(\cercle)$, we can represent a scalar product on this subspace with a matrix. For $(\philambda)_{\indices}$ the trigonometric basis of $S_m$ for the norm $\|\cdot\|_2$ we write $G_m$ the Gram matrix for the basis $(\philambda)_{\indices}$ and the scalar product $<\cdot, \cdot>_{2, \sigma}$ and $\hat{G}_m$ the empirical counterpart of $G_m$. In fact, $G_m$ and $\hat{G}_m$ are matrices of dimension $D_m \times D_m$ and for $(\lambda, \lambda')\in \Lambda_m^2$
\begin{equation}\label{eq:Def_G}
(G_m)_{\lambda, \lambda' \in \Lambda_m} = <\philambda, \varphi_{\lambda'}>_{2, \sigma} = \Inte \philambda(x)\varphi_{\lambda'}(x) \sigma(x) \dd x.
\end{equation}
\begin{equation}\label{eq:Def_G_chapeau}
 (\hat{G}_m)_{\lambda, \lambda' \in \Lambda_m} = <\philambda, \varphi_{\lambda'}>_{n, \sigma} = \frac{1}{n}\sum_{i=1}^n\ \int_{[L_i, U_i]} \philambda(x)\varphi_{\lambda'}(x) \dd x.
 \end{equation}
The following lemma shows that we can compute the value of the coefficients of $\mathring{f}_m$ using matrix $\hat G_m$. Note the closeness of our problem to a regression issue, with the design empirical measure replaced here by $\hat \sigma(x)\dd x$.
\begin{lemma}\label{def_coefs}
Assume $\hat{G}^{-1}_m$ is invertible. Set $\hat{U}_m = \left(\frac{1}{n}\sum_{i=1}^n \Delta_i \philambda(X_i)\right)_{\indices}$. Then the coefficients of $\mathring{f}_m$ are defined as 
\begin{equation}\label{eq:def_des_coefs}
\hat{A}_m= \hat{G}_m^{-1}\hat{U}_m.
\end{equation}
\end{lemma}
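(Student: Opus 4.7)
The plan is to write any candidate $t\in S_m$ in the trigonometric basis as $t=\sum_{\lambda\in\Lambda_m} a_\lambda\varphi_\lambda={}^tA\,\overrightarrow{\varphi_m}$ with $A=(a_\lambda)_{\lambda\in\Lambda_m}$, and then to recognize $\zeta(t)$ as a convex quadratic form in $A$ whose minimizer is obtained by a first-order condition.

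First I would rewrite the quadratic part of the contrast. Using definition~\eqref{eq:def_hat_sigma},
\[
\frac{1}{n}\sum_{i=1}^n\Inte t^2(x)\mathds{1}_{\{x\in[L_i,U_i]\}}\dd x
=\Inte t^2(x)\hat\sigma(x)\dd x
=\|t\|_{n,\sigma}^2,
\]
and expanding $t^2=\sum_{\lambda,\lambda'}a_\lambda a_{\lambda'}\varphi_\lambda\varphi_{\lambda'}$ together with the definition~\eqref{eq:Def_G_chapeau} of $\hat G_m$ gives $\|t\|_{n,\sigma}^2={}^tA\,\hat G_m A$. Next I would handle the linear part: since $\Delta_i=1$ forces $X_i'=X_i$ and $\Delta_i=0$ forces $\Delta_i t(X_i')=0$, one has $\Delta_i t(X_i')=\Delta_i t(X_i)$ for every $i$, hence
\[
\frac{1}{n}\sum_{i=1}^n\Delta_i t(X_i')
=\sum_{\lambda\in\Lambda_m}a_\lambda\Bigl(\frac{1}{n}\sum_{i=1}^n\Delta_i\varphi_\lambda(X_i)\Bigr)
={}^tA\,\hat U_m.
\]
Putting the two pieces together yields the closed-form expression
\[
\zeta(t)={}^tA\,\hat G_m A-2\,{}^tA\,\hat U_m.
\]

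Finally I would minimize this quadratic form. By hypothesis $\hat G_m$ is invertible, and it is moreover symmetric positive semi-definite since ${}^tA\,\hat G_m A=\|{}^tA\,\overrightarrow{\varphi_m}\|_{n,\sigma}^2\ge 0$; hence it is positive definite, so $A\mapsto\zeta(t)$ is strictly convex and coercive and possesses a unique minimizer. Writing the gradient with respect to $A$,
\[
\nabla_A\zeta(t)=2\,\hat G_m A-2\,\hat U_m,
\]
the first-order optimality condition yields $\hat G_m A=\hat U_m$, whence $\hat A_m=\hat G_m^{-1}\hat U_m$, which is precisely~\eqref{eq:def_des_coefs}.

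There is no real obstacle in this argument: the only point that needs a line of justification is the identity $\Delta_i t(X_i')=\Delta_i t(X_i)$, which bridges the observed $X_i'$ (set to the convention value $-\pi$ when censoring occurs) and the unobserved $X_i$, and the positive definiteness of $\hat G_m$ that turns the stationary point into the global minimum.
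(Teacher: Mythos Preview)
Your proof is correct and follows essentially the same approach as the paper: both express $\zeta(t)$ as the quadratic form ${}^tA\,\hat G_m A-2\,{}^tA\,\hat U_m$ and obtain $\hat A_m=\hat G_m^{-1}\hat U_m$ from the first-order condition. Your version is in fact slightly tidier, since you justify the identity $\Delta_i t(X_i')=\Delta_i t(X_i)$ and explicitly invoke positive definiteness to conclude that the stationary point is the global minimizer, whereas the paper computes the partial derivatives coordinatewise and simply appeals to the definition of $\mathring f_m$ as a minimum.
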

 The proof can be found in Section~\ref{preuve_def_coefs}.\\
Hence, under invertibility conditions,  the function $\mathring{f}_m$ can be computed. The next result shows which conditions ensure the invertibility of $G_m$ and $\hat{G}_m$. 
\begin{lemma}\label{inversible}
For all $m$ in $\mathcal{M}_n $, $\hat{G}_m$ is invertible a.s. and $G_m$ is invertible if Assumption (A) is true.
\end{lemma}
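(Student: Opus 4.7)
The plan is to reduce both invertibility claims to a single identity. Writing $a=(a_\lambda)_{\indices}\in\mathbb{R}^{D_m}$ and $t_a:=\sum_{\indices} a_\lambda \philambda \in S_m$, bilinearity together with the definitions \eqref{eq:Def_G}--\eqref{eq:Def_G_chapeau} give
\[ {}^t a\, G_m\, a = \|t_a\|_{2,\sigma}^2, \qquad {}^t a\, \hat G_m\, a = \|t_a\|_{n,\sigma}^2.\]
Since both matrices are symmetric, invertibility is equivalent to positive definiteness, and I only need to show that each quadratic form vanishes solely at $a=0$. Because $(\philambda)_{\indices}$ is a basis of $S_m$, $t_a\equiv 0$ implies $a=0$, so the task reduces to showing that the two weighted integrals vanish only for the zero function in $S_m$.

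For $G_m$, Assumption~(A) directly yields
\[ \|t_a\|_{2,\sigma}^2 = \Inte t_a^2(x)\sigma(x)\,\dd x \geq \sigma_0 \|t_a\|_2^2.\]
Hence ${}^t a\, G_m\, a = 0$ forces $t_a\equiv 0$, and $G_m$ is positive definite, hence invertible.

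For $\hat G_m$, the argument is slightly more delicate because $\|\cdot\|_{n,\sigma}$ is only a seminorm. I would write $\|t_a\|_{n,\sigma}^2 = \Inte t_a^2(x)\hat\sigma(x)\,\dd x$ and note that this vanishes if and only if $t_a=0$ almost everywhere on the random set $V_n := \bigcup_{i=1}^n [L_i, U_i]$. The standing hypothesis $L\neq U$ a.s.\ implies that each arc $[L_i, U_i]$ has strictly positive Lebesgue measure almost surely, and therefore so does $V_n$. Now the key observation is that $t_a$ is a trigonometric polynomial of degree at most $m$, hence a real analytic function on $\cercle$: unless identically zero, it has only finitely many zeros. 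Consequently, vanishing on a set $V_n$ of positive Lebesgue measure forces $t_a\equiv 0$, and $a=0$ as before. This gives the almost sure invertibility of $\hat G_m$.

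The main subtlety lies in this last step: one cannot hope for a pointwise lower bound $\hat\sigma\geq c>0$ uniformly on $\cercle$ (since the random set $V_n$ need not cover the whole circle), and the quantitative bound obtained in the $G_m$ case is unavailable. The argument instead exploits the rigidity of trigonometric polynomials via their analyticity, which is enough because $V_n$ has positive measure. Everything else is standard Gram-matrix reasoning.
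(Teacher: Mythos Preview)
Your proof is correct and follows essentially the same approach as the paper: reduce invertibility to positive definiteness via the identities ${}^t a\, G_m\, a=\|t_a\|_{2,\sigma}^2$ and ${}^t a\, \hat G_m\, a=\|t_a\|_{n,\sigma}^2$, use Assumption~(A) for $G_m$, and for $\hat G_m$ exploit that a nonzero trigonometric polynomial cannot vanish on a set of positive measure. The only cosmetic difference is that the paper justifies this last fact by the explicit factorisation $t(x)=e^{-imx}Q(e^{ix})$ with $Q$ a polynomial of degree at most $2m$, whereas you invoke real analyticity; both arguments yield the same conclusion.
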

The proof can be found in Section~\ref{inversion}.\\
The function $\mathring{f}_m$ is now well defined. Finally, in the same way as \citelink{Baraud-2002}{Baraud (2002)}, the estimator we consider is the following
\begin{equation} \label{eq:Definition_f_chapeau}
\tilde{f}_m : x\in\cercle\mapsto \left\{
\begin{array}{ll}
        \mathring{f}_m (x) \text{, if } \|\mathring{f}_m\|_2^2 \leq k_n, \\
        0 \text{, otherwise},
    \end{array}
\right.
\end{equation}
where we will take $k_n = n^2$ which turns to be a  convenient value in the proofs.

\subsection{MISE upper bound}

In this section we study the theoretical properties of  the estimator    $\tilde{f}_m$ defined in \eqref{eq:Definition_f_chapeau}.  
We denote by $\|f\|_{\infty}$ the essential supremum norm of $f$ and by $\mathbb{L}^{\infty}(\cercle)$  the vector space of essentially bounded measurable functions on the circle.
The next result explicits an upper bound for the MISE.

\begin{theorem}\label{MISE du deuxieme estimateur}
Suppose that $f\in\mathbb{L}^{\infty}(\cercle)$ and Assumption (A).
Then an upper bound of $\tilde{f}_m$ MISE is

\[\Esp(\|\tilde{f}_m - f\|^2_2) \leq C_1\|f_m-f\|_2^2 + C_2\frac{D_m}{n} +C_3\frac{1}{n} ,\]
where $f_m$ is the projection of $f$ on $S_m$ and $C_1$ and $C_2$ are constants that depend only on $\sigma_0$ and $C_3$ depend only on $\sigma_0$ and $\|f\|_\infty$.\\
In particular if $f$ is an element of $W(\beta,R)$ and if we choose $m=m_n$ such that $D_{m_n} = \lfloor n^{\frac{1}{2\beta +1}}\rfloor$ we have the following rate

\[\Esp(\|\tilde{f}_m -f\|_2^2) =  \mathcal{O}\left( n^{\frac{-2\beta}{2\beta +1}} \right).\]
\end{theorem}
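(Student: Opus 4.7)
The plan is to follow the classical projection/contrast analysis in the spirit of \citelink{Baraud-2002}{Baraud (2002)}, systematically replacing $\mathbb L^2(\cercle)$ by the $\sigma$-weighted space $(\mathbb L^2(\cercle),\langle\cdot,\cdot\rangle_{2,\sigma})$ and transferring back to $\|\cdot\|_2$ via Assumption~(A). Introduce the centered empirical process
\[\nu_n(t) := \frac{1}{n}\sum_{i=1}^n \Delta_i t(X_i') - \langle t,f\rangle_{2,\sigma},\]
which is zero-mean by \eqref{eq:Esperance_et_psi}, and the fluctuation $W_n(t) := \|t\|_{n,\sigma}^2 - \|t\|_{2,\sigma}^2$. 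A direct computation gives $\zeta(t)+\|f\|_{2,\sigma}^2 = \|t-f\|_{2,\sigma}^2+W_n(t)-2\nu_n(t)$. Applying this to $t=\mathring f_m$ and $t=f_m$, combined with $\zeta(\mathring f_m)\le\zeta(f_m)$ and the polar identity for the bilinear form $\psi_n(g,h) := \langle g,h\rangle_{n,\sigma}-\langle g,h\rangle_{2,\sigma}$, yields the basic inequality
\[\|\mathring f_m-f\|_{2,\sigma}^2 \le \|f_m-f\|_{2,\sigma}^2 + 2\nu_n(\mathring f_m-f_m) - W_n(\mathring f_m-f_m) - 2\psi_n(\mathring f_m-f_m,f_m).\]

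Next, introduce the good event $\Omega_n := \{\sup_{t\in B_m}|W_n(t)|\le 1/2\}$ with $B_m := \{t\in S_m : \|t\|_{2,\sigma}\le 1\}$, and the truncation event $A_n := \{\|\mathring f_m\|_2^2\le k_n\}$. On $\Omega_n$ the two norms are equivalent on $S_m$, so $\hat G_m$ is invertible with bounded condition number and both $W_n$ and $\psi_n$ quantities in the basic inequality are absorbed at the cost of constants, using $\|f_m\|_{2,\sigma}\le \|f_m\|_2\le\sqrt{\|f\|_\infty}$ to control the cross term. Applying $2\nu_n(g)\le\theta\|g\|_{2,\sigma}^2+\theta^{-1}\sup_{t\in B_m}\nu_n(t)^2$ for small $\theta<1$, reorganizing via $\|\mathring f_m-f\|_{2,\sigma}^2 = \|\mathring f_m-f_m\|_{2,\sigma}^2 + 2\langle\mathring f_m-f_m,f_m-f\rangle_{2,\sigma}+\|f_m-f\|_{2,\sigma}^2$, and using $\|\cdot\|_2^2\le\sigma_0^{-1}\|\cdot\|_{2,\sigma}^2$ together with $\|\cdot\|_{2,\sigma}^2\le\|\cdot\|_2^2$ gives on $\Omega_n\cap A_n$
\[\|\tilde f_m-f\|_2^2 \le C_1\|f_m-f\|_2^2 + C_2\sup_{t\in B_m}\nu_n(t)^2.\]
Expanding $t\in B_m$ in the $\|\cdot\|_2$-orthonormal basis $(\philambda)_{\indices}$ and using $\|t\|_2^2\le\sigma_0^{-1}$ on $B_m$ yields $\sup_{t\in B_m}\nu_n(t)^2\le \sigma_0^{-1}\sum_{\indices}\nu_n(\philambda)^2$; since $\Esp[\nu_n(\philambda)^2]\le \|f\|_\infty/n$, summation produces the dominant contribution $\Esp\sup_{t\in B_m}\nu_n(t)^2\le C(\sigma_0,\|f\|_\infty)\,D_m/n$.

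Off the good event, the truncation in \eqref{eq:Definition_f_chapeau} yields the deterministic bound $\|\tilde f_m-f\|_2^2\le 2k_n+4\pi\|f\|_\infty^2 = 2n^2+4\pi\|f\|_\infty^2$; it then suffices to prove $\Proba(\Omega_n^c)+\Proba(A_n^c\cap\Omega_n)=O(n^{-3})$ in order to obtain the residual $C_3/n$ term. The first probability is controlled by a matrix-Bernstein (or Talagrand) bound applied to $\hat G_m-G_m = n^{-1}\sum_i(M_i-\Esp M_i)$, where $M_i := \bigl(\int_{[L_i,U_i]}\philambda(x)\varphi_{\lambda'}(x)\dd x\bigr)_{\lambda,\lambda'\in\Lambda_m}$ is PSD with operator norm bounded by $2\pi\Phi_0^2 D_m$ thanks to \eqref{eq:Linear_Sieves2}; the second follows by Markov with sufficiently high moments of $\|\mathring f_m\|_2^2\le\sigma_0^{-1}\|\mathring f_m\|_{2,\sigma}^2$ on $\Omega_n$, which are finite by the preceding analysis. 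For the Sobolev claim, Parseval and \eqref{Sobolev_Def} give $\|f_m-f\|_2^2 = O(D_m^{-2\beta})$, and the choice $D_{m_n} = \lfloor n^{1/(2\beta+1)}\rfloor$ balances the two main terms at the $n^{-2\beta/(2\beta+1)}$ rate.

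The main obstacle is securing the $O(n^{-3})$ decay of $\Proba(\Omega_n^c)$: entrywise Bernstein is insufficient because of the $D_m^2$ union-bound loss, and one must either exploit the PSD structure of the $M_i$ through a matrix-Bernstein inequality, or apply Talagrand's inequality to the supremum $\sup_{t\in B_m}|W_n(t)|$ using the uniform bound \eqref{eq:Linear_Sieves2} and the restriction $D_m\le\lfloor n/2\rfloor-1$ to produce the required exponential rate.
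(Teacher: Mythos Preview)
Your handling of the cross term $-2\psi_n(\mathring f_m-f_m,f_m)$ is a genuine gap. On $\Omega_n=\{\sup_{t\in B_m}|W_n(t)|\le 1/2\}$, polarization gives only $|\psi_n(g,h)|\le\tfrac14(\|g\|_{2,\sigma}^2+\|h\|_{2,\sigma}^2)$ for $g,h\in S_m$, so the cross term contributes $\tfrac12\|f_m\|_{2,\sigma}^2\le\tfrac12\|f\|_\infty$, which is $O(1)$ and does not fit into any of the three terms in the claimed bound. Shrinking the threshold in $\Omega_n$ does not help: the residual is still a fixed multiple of $\|f_m\|_{2,\sigma}^2$. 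To rescue your route you would have to treat $\sup_{g\in B_m}|\psi_n(g,f_m)|$ as a separate centered empirical process (basis expansion gives $\Esp\sup\le C D_m/n$), not ``absorb it at the cost of constants''.

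The paper sidesteps this entirely by working in the empirical norm and centering \emph{conditionally}: with $\tilde\nu(t)=\frac1n\sum_i\bigl(\Delta_i t(X_i')-\Esp[\Delta_i t(X_i')\mid L_i,U_i]\bigr)$ one has the exact identity $\zeta(t)-\zeta(s)=\|t-f\|_{n,\sigma}^2-\|s-f\|_{n,\sigma}^2-2\tilde\nu(t-s)$, with no $W_n$ or $\psi_n$ residuals at all. The transfer to $\|\cdot\|_2$ is then done on the good event $\Xi_{\sigma_0}=\{\|\hat\sigma-\sigma\|_\infty\le\sigma_0/2\}$, whose probability is controlled by a direct DKW/Hoeffding bound on the single scalar process $\hat\sigma-\sigma$; this is model-free and avoids your matrix-Bernstein machinery. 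Two smaller points: on $A_n^c$ one has $\tilde f_m\equiv 0$, so the loss there is $\|f\|_2^2$, not $2k_n$, and a first-moment Markov bound suffices (no ``sufficiently high moments'' are needed); and your per-coordinate variance bound $\Esp[\nu_n(\philambda)^2]\le\|f\|_\infty/n$ makes $C_2$ depend on $\|f\|_\infty$, contradicting the statement --- summing $\sum_\lambda\philambda^2(X)\le\Phi_0^2 D_m$ \emph{before} taking expectation (as the paper does) yields the constant $\Esp(\Delta)\le 1$ and restores the $\sigma_0$-only dependence.
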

The proof can be found in Section~\ref{Preuve_MISE_Deuxieme_Estimateur}. \\
The convergence rate established in Theorem \ref{MISE du deuxieme estimateur} is in fact sharper than that obtained in \citelink{Conanec-2025}{Conanec (2025)}, as it depends on the smoothness of $f$ itself rather than that of $f\sigma$, which may differ. Combined with the lower bound given in Theorem~\ref{BorneInf}, this result shows that $\tilde{f}_m$ is a minimax optimal estimator of $f$.

\subsection{Adaptive estimation procedure}

We now aim to derive from this estimation procedure an adaptive estimator. Indeed, Proposition~\ref{MISE du deuxieme estimateur} shows that the estimator $\tilde{f}_m$ achieves the optimal convergence rate, but only when the dimension parameter $D_m$ is chosen appropriately, that is, when the smoothness $\beta$ of $f$ is known. Since $\beta$ is typically unknown, we propose a data-driven procedure to automatically select the value of $m$ that yields the best trade-off.
\\

Next  Proposition \ref{Adaptation du deuxieme estimateur}  proposes a data-driven estimator which verifies an oracle-type inequality. We recall that the operator norm of the matrix $M$, $\|M\|_{op}$, is the square root of the greatest eigenvalue of the matrix ${}^t MM$. 

\begin{proposition}\label{Adaptation du deuxieme estimateur}
Suppose that $f\in\mathbb{L}^{\infty}(\cercle)$ and Assumption (A) .  For $\hat{m}$ defined as 
\[ \hat{m} = \argmin{m\in \mathcal{M}_n}\left( \zeta (\mathring{f}_m) + {\rm{pen}}(m)\right),\]
where $\rm{pen}(\cdot)$ that verifies
\[ {\rm{pen}}(m) \geq \kappa \frac{\|G_m^{-1}\|_{op}}{2\pi}\frac{D_m}{n}\Inte f(x)\sigma(x) \dd x,\]
with $\kappa$ universal constant ($\kappa > 16$ works), we obtain the following inequality for the MISE of $\tilde{f}_{\hat{m}}$
\[ \Esp(\|\tilde{f}_{\hat{m}} - f\|^2_2) \leq C(\sigma_0) \inf_{m\in\mathcal{M}_n}(\|f-f_m\|_2^2 + {\rm{pen}}(m)) + \frac{\tilde{C}(\sigma_0, \|f\|_{\infty})}{n},\]
where $f_m$ is the projection of $f$ on $S_m$.
\end{proposition}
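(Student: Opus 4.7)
The plan is to follow the general model-selection strategy of \citelink{Baraud-2002}{Baraud (2002)}, adapted to our censored circular setting. The starting point is the identity
\[\zeta(t) = \|t-f\|_{n,\sigma}^2 - \|f\|_{n,\sigma}^2 - 2\omega_n(t),\]
where $\omega_n(t) := \frac{1}{n}\sum_{i=1}^n \bigl(\Delta_i t(X_i) - \int_{[L_i,U_i]} t(x)f(x)\dd x\bigr)$ is a centered empirical process in view of \eqref{eq:Esperance_et_psi}. Applying the definition of $\hat m$ with $t=\mathring f_{\hat m}$ and comparing with $s=f_m$ yields, for every fixed $m\in\mathcal M_n$,
\[\|\mathring f_{\hat m}-f\|_{n,\sigma}^2 \leq \|f_m-f\|_{n,\sigma}^2 + 2\omega_n(\mathring f_{\hat m}-f_m) + {\rm pen}(m) - {\rm pen}(\hat m).\]

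The heart of the proof is to absorb the stochastic term $2\omega_n(u)$, where $u := \mathring f_{\hat m}-f_m\in S_{m\vee\hat m}$, into the penalties. A Cauchy--Schwarz/Young argument gives, for any $\theta>0$,
\[2\omega_n(u) \leq \theta\|u\|_{2,\sigma}^2 + \theta^{-1}\Lambda_n(m\vee\hat m),\quad \Lambda_n(m'):=\sup_{v\in S_{m'},\,\|v\|_{2,\sigma}\leq 1}\omega_n(v)^2.\]
Taking an orthonormal basis $(\psi_\lambda)_{\lambda\in\Lambda_{m'}}$ of $(S_{m'},<\cdot,\cdot>_{2,\sigma})$ and expressing it in the trigonometric basis via $G_{m'}$, the change-of-basis formula combined with \eqref{eq:Linear_Sieves2} yields $\sum_\lambda \psi_\lambda(x)^2 \leq \|G_{m'}^{-1}\|_{op}\cdot D_{m'}/(2\pi)$, which produces
\[\mathbb E\Lambda_n(m') \leq \frac{\|G_{m'}^{-1}\|_{op}}{2\pi}\,\frac{D_{m'}}{n}\int f(x)\sigma(x)\dd x,\]
matching (up to the factor $\kappa$) the hypothesis on ${\rm pen}$. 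A Talagrand / Klein--Rio concentration inequality applied uniformly on the unit ball of $(S_{m'},\|\cdot\|_{2,\sigma})$, with variance estimate ${\rm Var}(\Delta t(X))\leq \|f\|_\infty\|t\|_{2,\sigma}^2$ and sup-norm estimate $\|t\|_\infty\leq\Phi_0\sqrt{D_{m'}}$, then gives $\mathbb E[(\Lambda_n(m')-\xi\,\mathbb E\Lambda_n(m'))_+]\leq C/n$, summable over $\mathcal M_n$ for a suitable $\xi$.

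Combining these bounds with the triangle inequality $\|u\|_{2,\sigma}^2\leq 2\|\mathring f_{\hat m}-f\|_{2,\sigma}^2+2\|f_m-f\|_{2,\sigma}^2$ and a small enough choice of $\theta$, the constraint $\kappa>16$ is designed precisely so that $\theta^{-1}\mathbb E\Lambda_n(m\vee\hat m)\leq \frac{1}{2}({\rm pen}(m)+{\rm pen}(\hat m))$; the term $\frac{1}{2}{\rm pen}(\hat m)$ is then absorbed by $-{\rm pen}(\hat m)$ on the right. Taking expectation leaves
\[\mathbb E\|\mathring f_{\hat m}-f\|_{2,\sigma}^2 \leq C(\sigma_0)\bigl(\|f-f_m\|_{2,\sigma}^2 + {\rm pen}(m)\bigr) + \frac{\tilde C}{n},\]
valid on a high-probability event $\Omega_n$ on which the empirical Gram matrices $\hat G_{m'}$ remain close in operator norm to $G_{m'}$ uniformly for $m'\in\mathcal M_n$ (so that $\|\cdot\|_{n,\sigma}$ and $\|\cdot\|_{2,\sigma}$ are equivalent up to a factor $2$ on each $S_{m'}$). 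Off $\Omega_n$ the truncation $k_n=n^2$ together with a matrix-Bernstein tail bound gives $k_n\,\mathbb P(\Omega_n^c) = O(1/n)$, which together with $\|\tilde f_{\hat m}-f\|_2^2\leq 2\pi\|f\|_\infty^2$ on this bad event accounts for the residual $\tilde C(\sigma_0,\|f\|_\infty)/n$. Finally, Assumption (A) yields $\|\cdot\|_{2,\sigma}^2\geq\sigma_0\|\cdot\|_2^2$, converting the bound into the announced oracle inequality.

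The main obstacle is the uniform Talagrand control of $\Lambda_n(m')$ over $\mathcal M_n$ with constants sharp enough to reach the explicit threshold $\kappa>16$; the secondary difficulty is the passage from the random semi-norm $\|\cdot\|_{n,\sigma}$ to the deterministic $\|\cdot\|_2$ via the event $\Omega_n$, which also simultaneously legitimates the use of the orthonormal basis $(\psi_\lambda)$ through invertibility of $G_{m'}$ (Lemma \ref{inversible}).
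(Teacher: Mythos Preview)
Your proposal is correct and takes essentially the same route as the paper: the same contrast identity, the same Young bound on the centered process $\omega_n$ (the paper's $\tilde\nu$), the same Talagrand/Klein--Rio concentration on the $\|\cdot\|_{2,\sigma}$-unit ball together with the change-of-basis bound $\sum_\lambda\psi_\lambda^2\leq\|G_{m'}^{-1}\|_{op}\Phi_0^2 D_{m'}$, and the same good/bad event split combined with the truncation $k_n=n^2$. The only cosmetic differences are that the paper takes the simpler event $\Xi_{\sigma_0}=\{\|\hat\sigma-\sigma\|_\infty\leq\sigma_0/2\}$ (controlled by Hoeffding/DKW rather than a matrix-Bernstein inequality, and giving the $\|\cdot\|_{n,\sigma}$--$\|\cdot\|_{2,\sigma}$ equivalence on all of $\mathbb L^2$ at once), and handles the truncation set $\{\|\mathring f_{\hat m}\|_2^2>k_n\}\cap\Xi_{\sigma_0}$ as a separate third term via Markov's inequality rather than folding it into the main estimate.
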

The proof can be found in Section~\ref{Preuve_Adaptation_Deuxieme_Estimateur}.

The estimator $\tilde{f}_{\hat{m}}$ satisfies an oracle-type inequality, but the penalty term involves two unknown constants, namely $\|G_m^{-1}\|_{op}$ and $\int f(x)\sigma(x)\dd x$. To obtain a fully computable penalty, we proceed as follows. We note that $\int f(x)\sigma(x) \dd x = \mathbb{E}(\Delta) \leq 1$, so this quantity does not need to be estimated. The other term $\|G_m^{-1}\|_{op}$ only depends on the censoring and can be seen as the norm of a precision matrix.
The importance of this inverse matrix is related to \citelink{Comte-Genon-2020}{Comte \& Genon-Catalot (2020)}, who have studied non-parametric regression as a partly inverse problem. 
To estimate it we use its empirical counterpart $\|\hat G_m^{-1}\|_{op}$, which can be shown to be a strongly consistent estimator. The following theorem establishes that, under this substitution, the resulting estimator with a fully computable penalty still satisfies the same type of oracle inequality. Note that alternative approaches exist to handle these constants, but they typically modify the penalty term by using rough bounds and introducing additional quantities such as $\| f\| _\infty$ or $\sigma_0$, which are also unknown, making them less satisfactory in practice.

\begin{theorem}\label{Adaptation sans inconnues du deuxieme estimateur}
Suppose that $f\in\mathbb{L}^{\infty}(\cercle)$ and Assumption \textrm{(A)}.  For $\hat{m}^*$ defined as 
\[ \hat{m}^* = \argmin{m\in \mathcal{M}_n}\left( \zeta (\mathring{f}_m) + \widehat{\rm{pen}}(m)\right),\]
and
\[ \widehat{\rm{pen}}(m) = \kappa \frac{\|\hat{G}_m^{-1}\|_{op}}{2\pi}\frac{D_m}{n},\]
with $\kappa$ universal constant ($\kappa > 32$ works), we obtain the following inequality for the MISE of $\tilde{f}_{\hat{m}^*}$
\[ \Esp(\|\tilde{f}_{\hat{m}^*} - f\|^2_2) \leq K(\sigma_0) \inf_{m\in\mathcal{M}_n}\left(\|f-f_m\|_2^2 + \kappa\frac{D_m}{n}\right) + \frac{\tilde{K}(\sigma_0, \|f\|_{\infty})}{n},\]
where $f_m$ is the projection of $f$ on $S_m$.
\end{theorem}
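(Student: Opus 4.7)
The plan is to reduce the statement to Proposition~\ref{Adaptation du deuxieme estimateur} by a concentration argument which compares the random penalty $\widehat{\mathrm{pen}}(m)$ to a theoretical penalty of the admissible form, uniformly in $m$. The key object is the favorable event
\[\Omega_n = \bigcap_{m\in\mathcal{M}_n}\Big\{\bigl\|G_m^{-1/2}\hat G_m G_m^{-1/2} - I_{D_m}\bigr\|_{op} \leq 1/2\Big\}.\]
On $\Omega_n$, the sandwich $\tfrac12 G_m \preceq \hat G_m \preceq \tfrac32 G_m$ yields $\tfrac12\|G_m^{-1}\|_{op}\leq \|\hat G_m^{-1}\|_{op}\leq 2\|G_m^{-1}\|_{op}$. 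Using that $\int_{\mathbb{S}^1}f(x)\sigma(x)\dd x = \Esp(\Delta)\leq 1$, we deduce
\[\frac{\kappa}{2}\cdot\frac{\|G_m^{-1}\|_{op}}{2\pi}\cdot\frac{D_m}{n}\cdot\Inte f\sigma \,\leq\, \widehat{\mathrm{pen}}(m)\,\leq\, 2\kappa\cdot\frac{\|G_m^{-1}\|_{op}}{2\pi}\cdot\frac{D_m}{n}.\]
For $\kappa>32$, the factor $\kappa/2$ exceeds the threshold $16$ required by Proposition~\ref{Adaptation du deuxieme estimateur}, so on $\Omega_n$ the random penalty $\widehat{\mathrm{pen}}$ meets the hypothesis of that proposition pathwise. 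Revisiting its proof with $\widehat{\mathrm{pen}}$ in place of $\mathrm{pen}$ (the noise absorption uses only this lower bound, which holds on $\Omega_n$), we get the oracle inequality with $\widehat{\mathrm{pen}}(m)$ inside the infimum; the upper bound on $\widehat{\mathrm{pen}}$, combined with $\|G_m^{-1}\|_{op}\leq 1/\sigma_0$ (a consequence of Assumption~(A), which gives $G_m \succeq \sigma_0 I_{D_m}$), then converts this into the announced bound $K(\sigma_0)\inf_m(\|f-f_m\|_2^2 + \kappa D_m/n) + \tilde K(\sigma_0,\|f\|_\infty)/n$ on the event $\Omega_n$.

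The contribution of the complementary event is absorbed thanks to the truncation $k_n=n^2$ built into \eqref{eq:Definition_f_chapeau}: deterministically $\|\tilde f_{\hat m^*}-f\|_2^2 \leq 2n^2 + 4\pi\|f\|_\infty^2$, so $\Esp\bigl(\|\tilde f_{\hat m^*}-f\|_2^2\,\mathds 1_{\Omega_n^c}\bigr)$ is bounded by a constant multiple of $n^2\,\Proba(\Omega_n^c)$. It therefore suffices to show $\Proba(\Omega_n^c) = O(n^{-3})$ to push this term into the $\tilde K/n$ remainder.

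The main obstacle, and only genuinely technical step, is this uniform concentration bound. Writing $\hat G_m = \tfrac{1}{n}\sum_{i=1}^n M_i^{(m)}$ with $M_i^{(m)} = \bigl(\int_{[L_i,U_i]}\varphi_\lambda\varphi_{\lambda'}\dd x\bigr)_{\lambda,\lambda'\in\Lambda_m}$, the uniform bound \eqref{eq:Linear_Sieves2} gives $\|M_i^{(m)}\|_{op}\leq \Phi_0^2 D_m$ almost surely, and the variance of the normalized summands $G_m^{-1/2}(M_i^{(m)}-\Esp M_i^{(m)})G_m^{-1/2}$ is controlled by $\|G_m^{-1}\|_{op}\,\Phi_0^2 D_m \leq \Phi_0^2 D_m/\sigma_0$. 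A matrix Bernstein inequality then yields, for each $m$,
\[\Proba\Bigl(\bigl\|G_m^{-1/2}\hat G_m G_m^{-1/2} - I_{D_m}\bigr\|_{op} > 1/2\Bigr) \leq 2 D_m\,\exp\!\bigl(-c\,\sigma_0\,n/D_m\bigr),\]
and a union bound over $m\in\mathcal M_n$ delivers $\Proba(\Omega_n^c)=O(n^{-3})$ under the standard mild restriction $D_m \leq c n/\log n$ (or after capping the collection accordingly), which closes the argument.
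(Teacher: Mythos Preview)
Your reduction to the argument of Proposition~\ref{Adaptation du deuxieme estimateur} through a two-sided comparison of $\|\hat G_m^{-1}\|_{op}$ with $\|G_m^{-1}\|_{op}$ on a favourable event is exactly the paper's strategy: on that event $\widehat{\mathrm{pen}}(m)$ dominates the theoretical penalty (so the Talagrand step goes through once $\kappa/2>16$), and is in turn dominated by a multiple of $D_m/n$ via $\|G_m^{-1}\|_{op}\leq 1/\sigma_0$; the complement is absorbed using the truncation $k_n=n^2$. On all of this there is nothing to object.

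The point where your proof falls short of the stated theorem is the control of $\Proba(\Omega_n^c)$. Matrix Bernstein gives at best $2D_m\exp(-c\sigma_0 n/D_m)$ for each $m$, and after a union bound over $|\mathcal M_n|\asymp n$ models this is $O(n^{-3})$ only under the cap $D_m\lesssim n/\log n$ that you yourself flag. But the theorem is stated for $\mathcal M_n=\{1,\dots,\lfloor n/2\rfloor-1\}$, so strictly speaking you have proved a weaker result. The paper exploits a structural feature you have overlooked: since $\hat G_m=\int_{\cercle}\overrightarrow{\varphi_m}(x)\,{}^t\overrightarrow{\varphi_m}(x)\,\hat\sigma(x)\dd x$, the randomness enters only through the scalar function $\hat\sigma$. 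For any $t\in\mathbb L^2(\cercle)$,
\[\bigl|\|t\|_{n,\sigma}^2-\|t\|_{2,\sigma}^2\bigr| \;=\; \Bigl|\Inte t^2(x)\bigl(\hat\sigma(x)-\sigma(x)\bigr)\dd x\Bigr| \;\leq\; \frac{\|\hat\sigma-\sigma\|_\infty}{\sigma_0}\,\|t\|_{2,\sigma}^2,\]
so the single event $\Xi_{\sigma_0}=\{\|\hat\sigma-\sigma\|_\infty\leq \sigma_0/2\}$ forces $\|G_m^{-1/2}\hat G_m G_m^{-1/2}-I_{D_m}\|_{op}\leq 1/2$ for \emph{every} $m$ simultaneously, with no union bound over $m$; and $\Proba(\Xi_{\sigma_0}^c)\leq 6e^{-n\sigma_0^2/18}$ by Lemma~\ref{Lemme} (Hoeffding plus DKW). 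Replacing your matrix Bernstein step by this dimension-free observation removes the $n/\log n$ restriction and closes the proof of the theorem as stated.
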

The proof can be found in Section~\ref{Preuve_Adaptation_Sans_Inconnues_Deuxieme_Estimateur}.\\
The following corollary establishes the convergence rate of the estimator $\tilde{f}_{\hat{m}^*}$ when $f$ belongs to a Sobolev class.

\begin{corollary}\label{Coro deuxieme estimateur}
Moreover if we assume that $f$ is an element of $W(\beta,R)$ with $\beta>0$, we obtain
\[\Esp(\|\tilde{f}_{\hat{m}^*} -f\|_2^2) =  \mathcal{O}\left( n^{\frac{-2\beta}{2\beta +1}} \right).\]
\end{corollary}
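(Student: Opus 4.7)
The plan is to deduce the rate directly from the oracle inequality of Theorem \ref{Adaptation sans inconnues du deuxieme estimateur}: I only have to control the bias term $\|f-f_m\|_2^2$ under the Sobolev assumption and then pick the dimension $D_m$ that balances bias and variance, verifying that such an $m$ belongs to $\mathcal{M}_n$ for $n$ large enough.

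For the bias, since $S_m=\mathrm{Span}\{\varphi_0,\varphi_1,\dots,\varphi_{2m}\}$ and $f_m$ is the orthogonal projection of $f$ onto $S_m$ for $\langle\cdot,\cdot\rangle_2$, Parseval's identity gives
\[\|f-f_m\|_2^2 \;=\; \sum_{j\geq 2m+1}|\langle f,\varphi_j\rangle_2|^2.\]
An inspection of the definition of $\alpha_j$ shows that for every $j\geq 2m+1=D_m$ one has $\alpha_j\geq (2m+2)^\beta\geq D_m^\beta$. Using the definition \eqref{Sobolev_Def} of $W(\beta,R)$, this yields
\[\|f-f_m\|_2^2 \;\leq\; D_m^{-2\beta}\sum_{j\geq 2m+1}\alpha_j^2|\langle f,\varphi_j\rangle_2|^2 \;\leq\; \frac{R^2}{\pi^{2\beta}}\,D_m^{-2\beta}.\]

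Plugging this estimate into the oracle inequality of Theorem \ref{Adaptation sans inconnues du deuxieme estimateur} gives
\[\Esp\bigl(\|\tilde f_{\hat m^*}-f\|_2^2\bigr) \;\leq\; K(\sigma_0)\inf_{m\in\mathcal{M}_n}\left(\frac{R^2}{\pi^{2\beta}}D_m^{-2\beta}+\kappa\frac{D_m}{n}\right)+\frac{\tilde K(\sigma_0,\|f\|_\infty)}{n}.\]
Choosing $m_n\in\mathcal{M}_n$ such that $D_{m_n}=\lfloor n^{1/(2\beta+1)}\rfloor$ equalizes both terms at the order $n^{-2\beta/(2\beta+1)}$. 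Such an $m_n$ does belong to $\mathcal{M}_n$ for $n$ large enough, since $n^{1/(2\beta+1)}\leq \lfloor n/2\rfloor-1$ eventually (as $\beta>0$). The residual $1/n$ term is of smaller order, because $2\beta/(2\beta+1)<1$, and the conclusion follows.

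I do not anticipate any real obstacle: the argument is the classical bias–variance compromise on a Sobolev class, applied to the data-driven estimator provided by Theorem \ref{Adaptation sans inconnues du deuxieme estimateur}, the adaptivity of $\hat m^*$ producing no loss beyond multiplicative constants.
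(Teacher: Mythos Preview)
Your proposal is correct and follows essentially the same route as the paper: the paper simply says the corollary follows from the oracle inequality of Theorem~\ref{Adaptation sans inconnues du deuxieme estimateur} together with the Sobolev bias bound $\|f-f_m\|_2^2\leq R^2\pi^{-2\beta}D_m^{-2\beta}$ and the choice $D_m=\lfloor n^{1/(2\beta+1)}\rfloor$, exactly as at the end of the proof of Theorem~\ref{MISE du deuxieme estimateur}. Your added checks that $m_n\in\mathcal{M}_n$ eventually and that the $1/n$ remainder is negligible are welcome but do not change the approach.
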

This corollary can be proved using the oracle inequality obtained in Theorem~\ref{Adaptation sans inconnues du deuxieme estimateur} and similar final computations of the proof of Theorem ~\ref{MISE du deuxieme estimateur}.
It tells us that the estimator $\tilde{f}_{\hat{m}^*}$ of $f$ is adaptive and minimax optimal.

\section{Simulations}\setcounter{equation}{0}\label{SectionSimulations}

We numerically implement the estimation procedure of Theorem~\ref{Adaptation sans inconnues du deuxieme estimateur}. To this end,  we recall the necessary quantities for our estimation procedure. The set of possible models is represented by $\mathcal{M}_n = \{ 1,\cdots, \lfloor n/2 \rfloor -1 \}$. In practice for large values of $n$ we know that we do not have to test all possible models  since the contrast penalizes complex models. Consequently,  we use $\mathcal{M}_n = \{ 1,\cdots, \max(\lfloor n/2 \rfloor -1,25) \}$. We then compute for all $m$ in $\mathcal{M}_n$ the penalty term evaluated in $m$ and the contrast evaluated in  $\mathring{f}_m$. For the contrast term, using \eqref{eq:Contraste}, and \eqref{eq:def_des_coefs} we have 
\[\zeta(\mathring{f}_m) = -{}^t \hat{A}_m \hat{U}_m.\]
To compute the penalty term we use the package CAPUSHE (see \citelink{Baudry}{Baudry (2012)}) that calibrates the tuning constant $\kappa$ via the slope heuristics. The selected model $\hat{m}^*$ is then defined as:
\[ \hat{m}^* = \argmin{m\in \mathcal{M}_n}\left( -\sum_{\indices} \frac{\hat{a}_\lambda}{n}\left[\sum_{i=1}^n \Delta_i\philambda(X_i) \right] + \kappa\frac{2m +1}{2\pi n}\|\hat{G}_m^{-1}\|_{op}\right).\]
Finally the estimator of $f$  is given by $\tilde{f}_{\hat{m}^*}= \left(\sum_{\lambda\in \Lambda_{\hat{m}^*}} \hat{a}_\lambda \philambda\right)\mathds{1}_{\{\sum_{\lambda\in \Lambda_{\hat{m}^*}}\hat{a}_\lambda^2 \leq n^2\}}$.\\

To assess the performance of our estimator $\tilde{f}_{\hat{m}^*}$, we consider the problem of estimating the probability density function of a Von Mises distribution $M(\mu,k)$ which is given by
\[ f(x) = \frac{1}{2\pi I_0(k)}e^{k \cos(x-\mu)},\]
where $\mu$ is the mean direction, $k$ is the concentration parameter and $I_0$ is the modified Bessel function of the first kind of order $0$ such that $f$ is a density on $\cercle$. This distribution is the circular equivalent of the Gaussian distribution on the real line. We consider different types of censorship and use a Monte Carlo method with $N=100$ samples of different sizes to estimate the MISE of our estimator. The four models we consider are the following:
\begin{itemize}
\item Model $1$: $X\sim M(\pi,1)$, $L  \text{ and } U \text{ are independent}$, $L \sim  M\left(\frac{2\pi}{3},1\right)$ and $U \sim  M\left(\frac{4\pi}{3}, 1\right)$.
\item Model $2$: $X\sim M(\pi,1)$, $L \text{ and } U \text{ are independent}$, $L \sim  M\left(\frac{4\pi}{3},1\right)$ and $U \sim  M\left(\frac{2\pi}{3}, 1\right)$. This is Model $1$ where we exchanged the role of $L$ and $U$.
\item Model $3$: $X\sim \frac{6}{10}M\left(\frac{\pi}{3},3\right) +  \frac{4}{10}M\left(\frac{15\pi}{9},3\right)$, $L  \text{ and } U \text{ are independent}$, \\$L \sim  M\left(\frac{2\pi}{3},3\right)$ and $U \sim  M\left(\frac{4\pi}{3}, 3\right)$.
\item Model $4$: $X\sim M(\pi,1)$, $L$ and $U$ are independent, $L \sim \mathcal{U}\left(\left[0 -\frac{\pi}{12},\pi + \frac{\pi}{12} \right]\right)$ and $U \sim \mathcal{U}\left(\left[ \pi - \frac{\pi}{12}, 0 +\frac{\pi}{12} \right]\right)$,
where $\mathcal{U}([a,b])$ is the uniform circular distribution on the circular interval $[a,b]$.
\end{itemize}
Table~\ref{table:1} summarizes the simulation results, reporting the estimated MISE, the average proportion of censored observations, and the mean lengths of the censoring arcs for each setting. Figure~\ref{fig:Simu_de_base} from (A) to (D) displays, for each model, the density and the estimator.  In Table~\ref{table:compare} we compare our estimator $\tilde{f}_{\hat{m}^*}$ with the one proposed in \citelink{Conanec-2025}{Conanec (2025)} using the same data sample for each model.\\
%
\begin{table}[h]
\begin{center}
\begin{tabular}{|c c c c|c| c|}
\hline {} & {$n=50$} & {$n=100$} & {$n=500$} & $\%$ cens. &  length cens. arc\\
\hline
&\multicolumn{3}{c|}{MISE estimation}& &\\
{Model 1} & {$0.038$} & {$0.021$} & {$0.006$} &44.2\% & 3.46\\
{Model 2} & {$0.036$} & {$0.025$} & {$0.006$} & 55.5\%& 2.80\\
{Model 3} & {$0.253$} & {$0.128$} & {$0.055$} & 85.9\% & 4.10\\
{Model 4} & {$0.043$} & {$0.023$} & {$0.004$} & 35.2\% & 3.15\\
\hline
\end{tabular}
\end{center}
\caption{MISE estimation, mean percentage of censored data and mean length of the censoring arc for $N=100$ replications of simulated data of different sample size for each model.}
\label{table:1}
\end{table}
\begin{figure}[!h]
\begin{subfigure}{.45\textwidth}
  \centering
   \includegraphics[width=6.5cm]{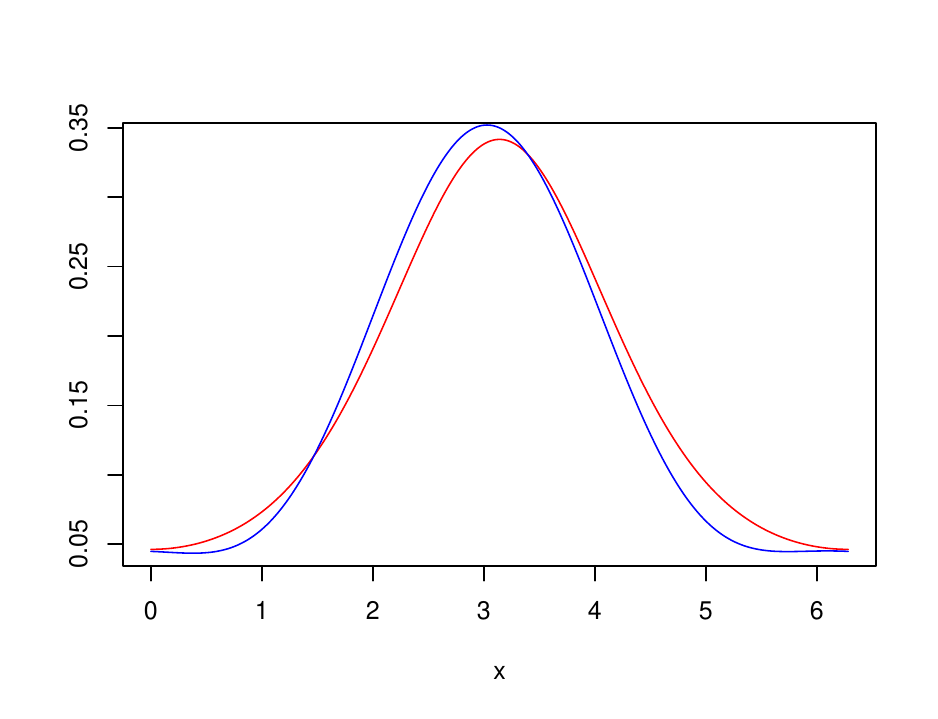}  
  \caption{Model $1$.}
  \label{fig:Sub_1_monitoring}
\end{subfigure}
\begin{subfigure}{.45\textwidth}
  \centering
  \includegraphics[width=6.5cm]{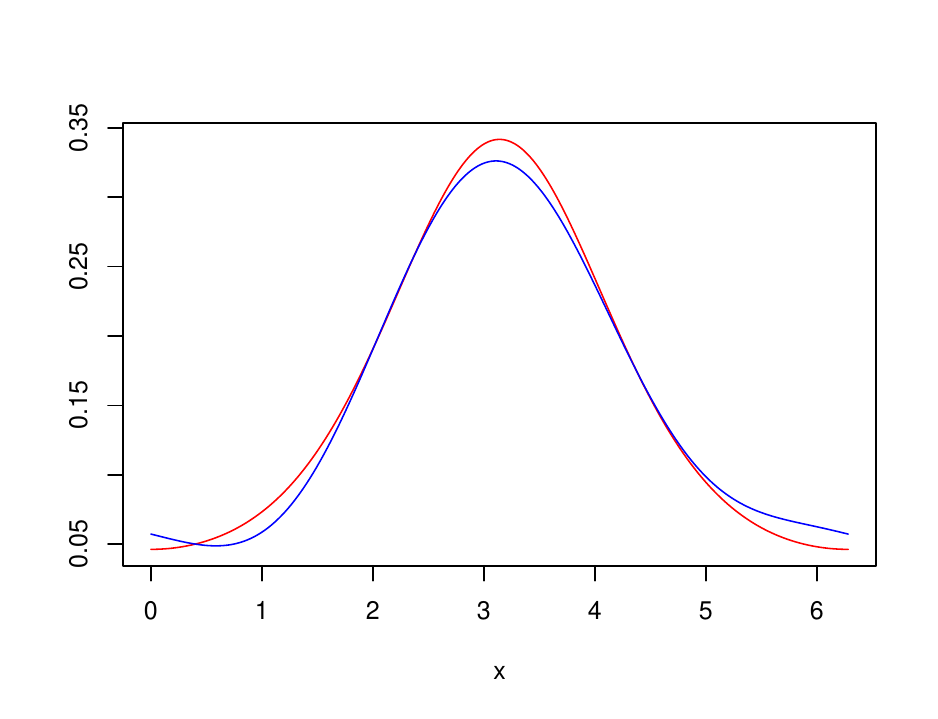}  
  \caption{Model $2$.}
  \label{fig:Sub_1_monitoring}
\end{subfigure}
\begin{subfigure}{.45\textwidth}
  \centering
  \includegraphics[width=6.5cm]{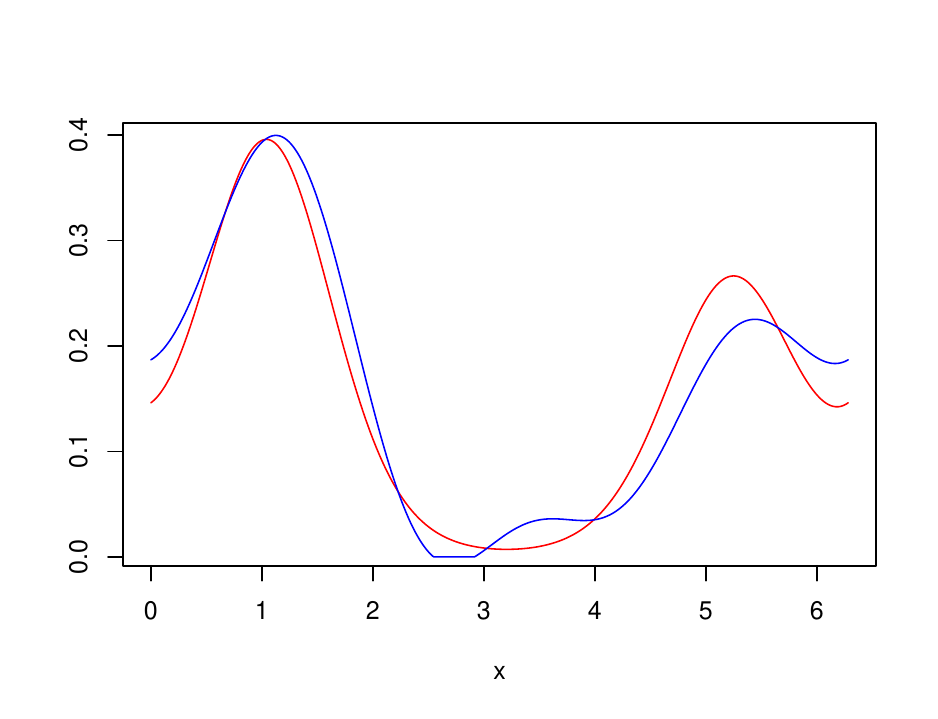}    
  \caption{Model $3$.}
  \label{fig:Sub_1_monitoring}
\end{subfigure}
\begin{subfigure}{.45\textwidth}
  \centering
  \includegraphics[width=6.5cm]{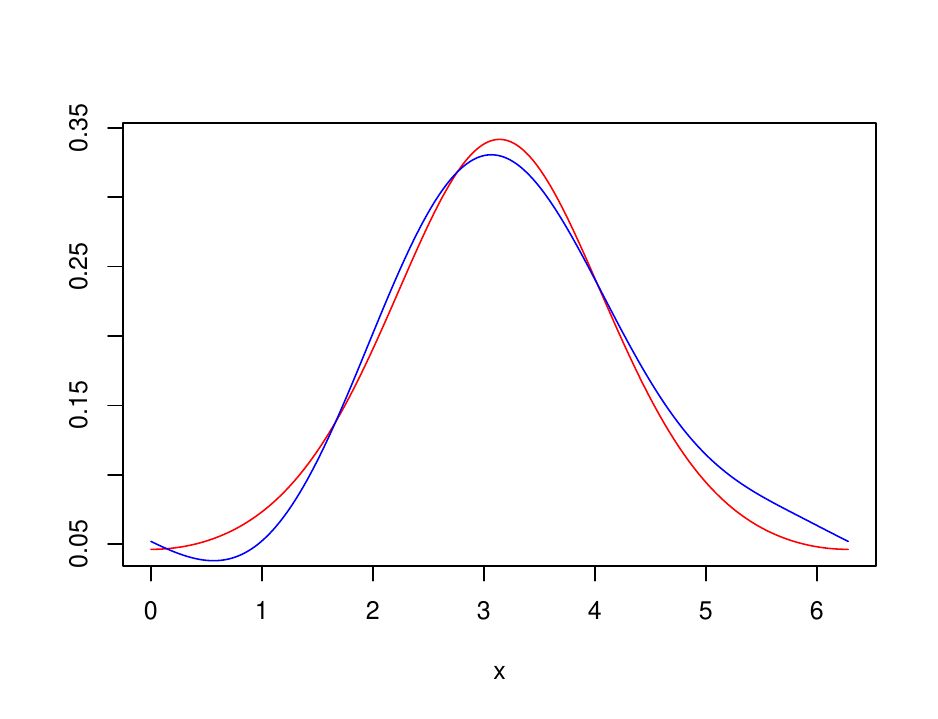}    
  \caption{Model $4$.}
  \label{fig:Sub_1_monitoring}
\end{subfigure}
\begin{subfigure}{.45\textwidth}
  \centering
  \includegraphics[width=6.5cm]{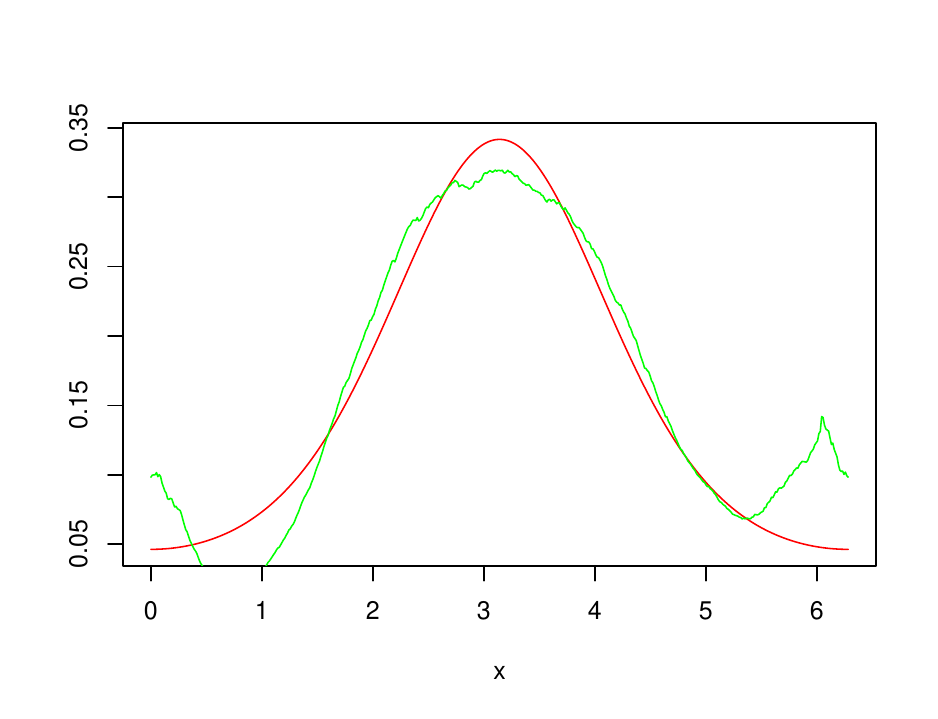}    
  \caption{Model $4$.}
  \label{fig:Sub_1_monitoring}
\end{subfigure}
\caption{Plots of the true density in red, the estimator $\tilde{f}_{\hat{m}^*}$ in blue and the estimator of \textcolor{blue}{Conanec (2025)} in green for a sample of size $n=500$ simulated data.} 
\label{fig:Simu_de_base}
\end{figure}
\begin{table}[h]
\begin{center}
\begin{tabular}{|c c c c c|}
\hline {Sample size} & {$50$} & {$100$} & {$200$} & {$500$}  \\
\hline\multicolumn{5}{|c|}{Model $1$}\\
{$\hat{f}_{\hat{m}}$} & {$0.582$} & {$0.033$} & {$0.018$} & {$0.009$} \\
{$\tilde{f}_{\hat{m}^*}$} & \boldmath{$0.438$} &  \boldmath{$0.020$} &  \boldmath{$0.013$} &  \boldmath{$0.005$} \\
\hline \multicolumn{5}{|c|}{Model $2$} \\
{$\hat{f}_{\hat{m}}$} & \boldmath{$0.040$} & \boldmath{$0.016$} & {$0.013$} & {$0.005$} \\
{$\tilde{f}_{\hat{m}^*}$} & {$0.042$} & {$0.021$} &  \boldmath{$0.012$} &  {$0.005$} \\
\hline\multicolumn{5}{|c|}{Model $3$}\\
{$\hat{f}_{\hat{m}}$} & {$0.682$} & {$0.547$} & {$0.228$} & {$0.079$} \\
{$\tilde{f}_{\hat{m}^*}$} &  \boldmath{$0.316$} &  \boldmath{$0.121$} &  \boldmath{$0.066$} &  \boldmath{$0.039$} \\
\hline\multicolumn{5}{|c|}{Model $4$}\\
{$\hat{f}_{\hat{m}}$} & {$0.066$} & {$0.038$} & {$0.018$} & {$0.007$} \\
{$\tilde{f}_{\hat{m}^*}$} &  \boldmath{$0.043$} &  \boldmath{$0.020$} &  \boldmath{$0.011$} &  \boldmath{$0.004$} \\
\hline
\end{tabular}
\end{center}
\caption{MISE estimation of our estimator $\tilde{f}_{\hat{m}^*}$ and the one from  \textcolor{blue}{Conanec (2025)}  $\hat{f}_{\hat{m}}$ using the same $N=100$ replications of simulated data.}
\label{table:compare}
\end{table}

Analyzing Table~\ref{table:1}, we observe that for the four models, the MISE decreases towards $0$ as $n$ increases, which is consistent with the theoretical results. The third model exhibits a higher MISE due to the strong level of censoring. 
The plots in Figure~\ref{fig:Simu_de_base} illustrate that the estimator performs well across the four models, even under high levels of censoring and for more challenging distributions, such as the bimodal case. Moreover, compared with \citelink{Conanec-2025}{Conanec (2025)}, our approach yields a smoother estimate of $f$, since it does not involve division by the empirical distribution function: see reconstructions (D) (our estimator) and (E) (his estimator) for  Model 4.  As for Table~\ref{table:compare}, it shows that except for Model 2 our estimator is always better than the one from \citelink{Conanec-2025}{Conanec (2025)}. \\
Finally, in Table~\ref{Tab:Comparaison1} we compare our estimator to the one introduced by  \citelink{Jammalamadaka-2009}{Jammalamadaka \& Mangalam (2009)}. 
We simulate $L$ from a uniform circular distribution, and $U$ is defined as $L - \alpha$, so that the censoring arc always has length $\alpha$. We then generate $200$ samples of size $100$, each drawn from a $M(2,1)$ distribution. 
\begin{table}[!h]
\begin{center}
\begin{tabular}{|c c c c c c c|}
\hline \multirow[t]{2}{*}{ Length of censoring arc $\alpha$ } & \multicolumn{3}{l}{ Estimation of $\mu$} & \multicolumn{3}{l|}{ Estimation of $k$} \\
& $\tilde{\mu}$ & $\hat{\mu}$ & $\mathring{\mu}$ & $\tilde{k}$  & $\hat{k}$ & $\mathring{k}$ \\
\hline 1 & {$1.985$} & {$2.005$} & {$2.005$} & {$1.047$} & {$1.041$} & {$1.044$} \\
 3 & {$2.006$} & {$1.990$}  & {$1.999 $}& {$1.037$} & {$1.021$} & {$0.986$} \\
\hline
\end{tabular}
\end{center}
\caption{Estimation of $(\mu,k)$ when $\mu=2$ and $k=1$, $X \sim M(\mu,k), L$ follows the uniform distribution and $U= L-\alpha$ with $\alpha \in \{1,3\}$.  Estimates $(\tilde{\mu},\tilde{k})$ are from  \textcolor{blue}{Jammalamadaka \& Mangalam (2009)}, $(\hat{\mu},\hat{k})$ are from  \textcolor{blue}{Conanec (2025)} and $(\mathring{\mu},\mathring{k})$ are our estimates.}
\label{Tab:Comparaison1}
\end{table}
%
%

An examination of Table~\ref{Tab:Comparaison1} shows that our estimator performs quite satisfactorily, even though, unlike \citelink{Jammalamadaka-2009}{Jammalamadaka \& Mangalam (2009)}, we do not impose a Von Mises assumption on the underlying distribution.


\section{Proofs}\setcounter{equation}{0}
\label{sec:proofs}

\subsection{Proofs notations}
~\\
~\\
\begin{tabular}{ l l }
$B^2_m(x, y)$ & Ball for the norm $\|\cdot\|_2$ in $S_m$ of center $x$ and radius $y$ \\
$<\cdot, \cdot>_2$ & Scalar product associated to the norm $\|\cdot\|_2$\\
$\|\cdot\|_{2, \sigma}$ & $\mathbb{L}^2$-norm on $\cercle$ weighted by the function $\sigma$\\
$B^{2, \sigma}_m (x, y)$ & Ball for the norm $\|\cdot\|_{2, \sigma}$ in $S_m$ of center $x$ and radius $y$ \\
$B^{2, \sigma}_{m,m'} (x, y)$& Ball for the norm $\|\cdot\|_{2, \sigma}$ in $S_m +S_{m'}$ of center $x$ and radius $y$\\
$<\cdot, \cdot>_{2, \sigma}$ & Scalar product associated to the norm $\|\cdot\|_{2, \sigma}$\\
$\|\cdot\|_{n, \sigma}$ & $\mathbb{L}^2$-{semi}norm on $\cercle$ weighted by the estimator $\hat{\sigma}$\\
$<\cdot, \cdot>_{n, \sigma}$ & {Semi-}scalar product associated to the norm $\|\cdot\|_{n, \sigma}$\\
$B_m^{n, \sigma} (x, y)$ & Ball for the {semi}norm $\|\cdot\|_{n,\sigma}$ in $S_m$ of center $x$ and radius $y$ \\
\end{tabular}

\subsection{Proof of Theorem~\ref{BorneInf}}\label{LowerBound}
 
The proof of the lower bound is based on Theorem 2.6 of \citelink{Tsybakov}{Tsybakov (2009)}. To this end, set an integer $M>1$ that will be defined later. We need first to define $M+1$ hypotheses. 
Recalling that $(\varphi_l)_l$ is the trigonometric basis of $\mathbb{L}^2(\cercle)$, our hypotheses are defined as follows
\begin{equation}\nonumber
		f_j := f_{\theta^{(j)}}: x\mapsto  \frac{1}{2\pi} +\gamma \sum_{l=1}^{\mathcal{K}} \omega^{(j)}_l \varphi_l (x), \forall j\in \{ 0, \dots , M \} ,
\end{equation}
where $\omega^{(0)} = (0,\cdots, 0)$, i.e $f_0: x\mapsto \frac{1}{2\pi}$, $\omega^{(j)} \in \{0, 1\}^{\mathcal{K}}$ and the constants $\gamma$ and $\mathcal{K}$ being defined later.
This means that the $(\gamma\omega^{(j)}_l)_l$ are the Fourier coefficients of the function $f_j$ where we forced the first coefficient to be equal to $\frac{1}{\sqrt{2 \pi}}$.\\
We write $\Proba_{j, n}$ the distribution of a sample of size $n$ of triplets $(X_i', L_i, U_i)$, when the $f_j$ is the density function of the $X_i$'s. 
To apply Theorem 2.6 of \citelink{Tsybakov}{Tsybakov (2009)}, we need to verify the four next assertions
\begin{equation}\nonumber
\left\{\begin{array}{ll}
        (1) \hspace{6pt} f_j \in W(\beta,R) ,\\
	(2) \hspace{6pt} f_j(x) \geq 0 , \forall x\in\cercle \text{ and } \Inte f_j(x)\dd x =1,\\
	(3) \hspace{6pt}\|f_j - f_i\|_2 \geq 2A n^{\frac{-\beta}{2\beta+1}} , \forall i\ne j,\\
        (4) \hspace{6pt} \frac{1}{M}\sum_{j=1}^M\chi^2(\Proba_{j,n}, \Proba_{0,n}) \leq \frac{M}{4} < \infty ,
    \end{array}
\right.
\end{equation}
where $W(\beta,R)$ is the Sobolev class of functions recalled in \eqref{Sobolev_Def},  $A$ is a real number we will define later. The $\chi^2$ divergence is defined for $P \ll Q$ as the following $$\chi^2(P, Q) = \int \left(\frac{\dd P}{\dd Q}\right)^2 \dd Q -1 = \int \left( \frac{\dd P }{\dd Q} - 1\right)^2 \dd Q.$$
 We begin by checking  the first condition.

1/ $f_j \in W(\beta , R)$.\\
We need to show that $ \sum_{l=0}^{\mathcal{K}} \alpha_l^2 |<f_j, \varphi_l >_2|^2 < \frac{R^2}{\pi^{2\beta}}$, with

\begin{equation}\nonumber
\alpha_l=\left\{\begin{array}{ll}
        l^{\beta}, \hspace{25pt}\text{for even }l,\\
        (l+1)^{\beta}, \hspace{0pt}\text{for odd }l.
    \end{array}
\right.
\end{equation}
We have 
\begin{align*}
 \sum_{l=0}^{\mathcal{K}} \alpha_l^2  |<f_j, \varphi_l >_2|^2 = \sum_{l=1}^\mathcal{K} \alpha_l^2 \gamma^2 \omega_l^2  \leq \gamma^2 \sum_{l=1}^{\mathcal{K}} (l+1)^{2\beta} &\leq \gamma^2 \mathcal{K} (\mathcal{K}+1)^{2\beta} \\
 &\leq 2^{2\beta}\gamma^2 \mathcal{K}^{2\beta +1}.
\end{align*}
We want $2^{2\beta}\gamma^2 \mathcal{K}^{2\beta + 1} < \frac{R^2}{\pi^{2\beta}}$. We will then set
\[ \gamma^2 = c_1 \mathcal{K}^{-2\beta - 1},\]
where $c_1 < \frac{R^2}{(2\pi)^{2\beta}}$.\\

2/ $f_j$'s integral value and sign.\\
Since $f_j$ is a finite linear combination of functions that integrate to $0$ and the constant  $\frac{1}{2\pi}$ integrates to $1$, $f_j$'s integral is equal to $1$.\\
To verify that $f_j$ is nonnegative, we need to compute the following upper bound

\begin{align*}
| f_j (x) - f_0 (x)|^2 &=\left | \sum_{l=1}^{\mathcal{K}} \gamma \omega^{(j)}_l \varphi_l(x)\right|^2 \leq \left(  \sum_{l=1}^{\mathcal{K}} \gamma^2 {\omega^{(j)}_l}^2\right) \left(  \sum_{l=1}^{\mathcal{K}} \varphi_l(x)^2\right)\\
& \leq \mathcal{K}\gamma^2  \sum_{l=1}^{\mathcal{K}} {\omega^{(j)}_l}^2 \leq \mathcal{K}^2 \gamma^2.
\end{align*}
Thus we have
\[ f_j \geq f_0 - \|f_j - f_0\|_{\infty} \geq \frac{1}{2\pi} - \mathcal{K}\gamma = \frac{1}{2\pi} - \sqrt{c_1}\mathcal{K}^{-\beta + \frac{1}{2}}.\]
For $\beta \geq \frac{1}{2}$ ,
\[ \frac{1}{2\pi} - \sqrt{c_1}\mathcal{K}^{-\beta + \frac{1}{2}} \geq \frac{1}{2\pi} - \sqrt{c_1}.\]
Hence, if one wants $f_j$  to be nonnegative it is sufficient to have 
\[ c_1 \leq \frac{1}{4\pi^2}.\]

3/ Separation rate.\\
For $j,i \in \{0,\dots,M\}$, we have
\[ \|f_j - f_i\|_2^2 = \|\sum_{l=1}^\mathcal{K} \gamma({\omega^{(j)}_l} - {\omega^{(i)}_l})\varphi_l\|_2^2 = \gamma^2 \sum_{l=1}^\mathcal{K} ({\omega^{(j)}_l} - {\omega^{(i)}_l})^2.\]
We now use Lemma $2.9$ of \citelink{Tsybakov}{Tsybakov (2009)} which gives the Varshamov-Gilbert bound. We recall it here
\begin{lemma}
Let $\mathcal{K}\geq 8$. There exists a subset $\left(\omega^1, \cdots , \omega^M \right) \subset \{0, 1\}^\mathcal{K}$ such that $\omega^0 = (0, \dots, 0)$,
\[ \sum_{l=1}^\mathcal{K} (\omega^i_l - \omega^j_l)^2 \geq \frac{\mathcal{K}}{8}, \hspace{5pt} \forall 1\leq i < j \leq M, \]
and $M \geq 2^{\frac{\mathcal{K}}{8}}$.
\end{lemma}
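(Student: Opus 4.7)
The plan is a greedy packing argument. Starting with $\omega^0 = (0,\ldots,0)$, I would at each step try to add a new vector $\omega^{k+1}\in\{0,1\}^{\mathcal{K}}$ whose Hamming distance from each already chosen vector is at least $\mathcal{K}/8$. The procedure can only fail to produce such a vector when the union of the Hamming balls of radius $\lceil\mathcal{K}/8\rceil-1$ centered at $\omega^0,\ldots,\omega^k$ already covers $\{0,1\}^{\mathcal{K}}$. Consequently the output size is at least $2^{\mathcal{K}}$ divided by the volume of a single such Hamming ball, and it remains to upper bound that volume.

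The quantitative heart of the argument is the classical entropy bound: for every integer $q \le \mathcal{K}/2$,
\[
\sum_{j=0}^{q}\binom{\mathcal{K}}{j} \;\le\; 2^{\mathcal{K}\,H(q/\mathcal{K})}, \qquad H(p):=-p\log_2(p)-(1-p)\log_2(1-p).
\]
I would prove this in a few lines from the binomial identity $\sum_{j}\binom{\mathcal{K}}{j}p^j(1-p)^{\mathcal{K}-j}=1$ evaluated at $p=q/\mathcal{K}$, combined with the elementary inequality $p^j(1-p)^{\mathcal{K}-j}\ge p^q(1-p)^{\mathcal{K}-q}$ for $j\le q$ (which uses $p\le 1/2$, hence $p\le 1-p$).

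Applying this bound with $q=\lceil \mathcal{K}/8\rceil -1<\mathcal{K}/8$ and using the explicit numerical fact $H(1/8)=3/8-(7/8)\log_2(7/8)\approx 0.544<7/8$, the greedy construction continues for at least $2^{\mathcal{K}(1-H(1/8))} \ge 2^{\mathcal{K}/8}$ steps, producing the desired family (including $\omega^0$, from which every later $\omega^i$ is at distance $\ge \mathcal{K}/8$ by construction). The hypothesis $\mathcal{K}\ge 8$ is used only to guarantee $\lceil \mathcal{K}/8\rceil\ge 1$ so that the radius is well-defined and the estimate is non-vacuous.

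The main obstacle is bookkeeping rather than conceptual: one has to track carefully that the pairwise-distance threshold $\mathcal{K}/8$ matches the exponent $\mathcal{K}/8$ in the count on $M$ after handling floors and ceilings. Should this become fussy, a clean alternative I would fall back on is the probabilistic proof. Draw $\omega^1,\ldots,\omega^M$ i.i.d.\ uniformly on $\{0,1\}^{\mathcal{K}}$; each pairwise Hamming distance is then $\mathrm{Binomial}(\mathcal{K},1/2)$-distributed, so Hoeffding's inequality yields $\Proba(d_H(\omega^i,\omega^j)<\mathcal{K}/8)\le e^{-9\mathcal{K}/32}$, and a union bound over the $\binom{M+1}{2}$ pairs shows that any $M\le 2^{\mathcal{K}/8}$ admits such a configuration with strictly positive probability, with slack since $9/32>(\ln 2)/4$.
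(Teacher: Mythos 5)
The paper itself does not prove this lemma---it is stated verbatim as Lemma~2.9 of Tsybakov (2009) and cited without proof---so there is no internal argument to compare against. Your two proposed routes are both correct and both standard. The greedy packing argument combined with the entropy bound on Hamming-ball volumes is essentially the textbook proof (Tsybakov's own argument is also a greedy construction, bounding the excluded volume at each step, though he estimates the binomial tail by a large-deviation bound rather than the entropy inequality you use---both give the same exponential rate). The probabilistic fallback via Hoeffding plus a union bound is a clean and equally valid alternative.

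Two small points worth tightening if you write this out. First, in the covering step the greedy stops when balls of radius $\lceil \mathcal{K}/8\rceil - 1$ centered at $\omega^0,\ldots,\omega^k$ cover $\{0,1\}^{\mathcal{K}}$, which gives $k+1 \ge 2^{\mathcal{K}}/V(\mathcal{K},r)$; since the statement counts $\omega^0$ separately from $\omega^1,\ldots,\omega^M$, you then need $2^{\mathcal{K}(1-H(1/8))} - 1 \ge 2^{\mathcal{K}/8}$, which is where the hypothesis $\mathcal{K}\ge 8$ actually enters (the radius is already well-defined for all $\mathcal{K}\ge 1$, so your stated justification for $\mathcal{K}\ge 8$ is a little off, though the conclusion is fine since $1-H(1/8)\approx 0.456 > 1/8$ with plenty of room). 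Second, the probabilistic version also needs the $\binom{M+1}{2}$ count, not $\binom{M}{2}$, precisely because $\omega^0$ must be at distance $\ge \mathcal{K}/8$ from the random $\omega^i$'s; you correctly wrote $\binom{M+1}{2}$, so this is handled. In short, the proposal is correct and, for the greedy variant, is in the same spirit as the source the paper cites.
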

Suppose $\mathcal{K}\geq 8$. We have 
\[ \|f_j - f_i\|_2^2 \geq c_1 \mathcal{K}^{-2\beta - 1} \frac{\mathcal{K}}{8} = \frac{c_1}{8}\mathcal{K}^{-2\beta}. \]
We want each hypothesis to be sufficiently distant from one another, i.e
\[ \frac{c_1}{8}\mathcal{K}^{-2\beta} \geq 4A^2n^{\frac{-2\beta}{2\beta + 1}}.\]
So if we set $\mathcal{K}=\left\lfloor n^{\frac{1}{2\beta+1}}\right\rfloor$ and $A\leq\left(\frac{c_1}{32}\right)^{\frac{1}{2}}$
we then have
\[ \|f_j - f_i\|_2\geq 2 An^{\frac{-\beta}{2\beta +1}}\]

4/ Chi-squared divergence.\\
We want to compute the distribution of the triplet $(X', L, U)$. We recall that $X'=\Delta X - \pi(1-\Delta)$ with $\Delta= \mathds{1}_{\{X\in [L, U]\}}$ giving to $X'$ the value $X$ if it is not censored and the value $-\pi$ otherwise. The distribution of $\Delta$ is a Bernoulli law, with parameter
\[ \Esp(\mathds{1}_{\{X\in [L, U ]\}}) = \Inte\Inte\Inte \mathds{1}_{\{x\in [l, u ]\}} f(x) \Proba_{(L, U )}(\dd l ,\dd u) \dd x = \Inte f(x) \sigma(x) \dd x.\]
%
We can show that the distribution of the observations triplet $(X', L, U)$ is
\begin{align*} \Proba_{(X', L, U)}(\dd x, \dd l , \dd u) &= f(x)\mathds{1}_{\{x\in [l, u]\}}\Proba_{(L, U)}(\dd l ,\dd u) \dd x \\
&+ \left( \Inte f(t)\mathds{1}_{\{t\notin [l, u]\}} \dd t\right)\Proba_{(L, U)}(\dd l , \dd u)\delta_{-\pi}(\dd x).
\end{align*}
To justify this result, set $h$ a positive and measurable function. We have
\begin{align*}
\Esp(h(X', L, U))&= \Esp(h(X, L, U) \mathds{1}_{\{X\in [L, U]\}}) + \Esp(h(-\pi, L, U) \mathds{1}_{\{X \notin [L, U]\}})\\
&= \Inte\Inte\Inte h(x, l, u) \mathds{1}_{\{x\in [l, u]\}} f(x) \Proba_{(L, U)}(\dd l , \dd u) \dd x \\
&\quad+\Inte\Inte\Inte h(-\pi, l, u) \mathds{1}_{\{x\notin [l, u]\}} f(x) \Proba_{(L, U)}(\dd l , \dd u) \dd x  \\
&= \Inte\Inte\Inte h(x, l, u) \mathds{1}_{\{x\in [l, u]\}} f(x)\Proba_{(L, U)}(\dd l , \dd u)\dd x \\
&\quad+\Inte\Inte h(-\pi, l, u)\left(\Inte \mathds{1}_{\{t\notin [l, u]\}} f(t) \dd t\right) \Proba_{(L, U)}(\dd l , \dd u) \\
&= \Inte\Inte\Inte h(x, l, u) \mathds{1}_{\{x\in [l, u]\}} f(x)\Proba_{(L, U)}(\dd l , \dd u) \dd x \\
&\quad+\Inte\Inte \Inte h(x, l, u) \left(\Inte \mathds{1}_{\{t\notin [l, u]\}} f(t) \dd t\right)\Proba_{(L, U)}(\dd l , \dd u) \delta_{-\pi}(\dd x) .\\
\end{align*}
We now know the distribution of our observations triplet.  Our observations being $i.i.d$, the measure of our sample of size $n$ is the product of the measure of one single observation. So we only need to compute $\chi^2(\Proba_{j, 1}, \Proba_{0, 1})$. To lighten the notations we write $\Proba_j$ the distribution of $(X_1,L_1,U_1)$ when $f_j$ is the density of $X_1$. We first need to compute the derivative of Radon-Nikodym of $\Proba_j$ with respect to  $\Proba_0$. So we are looking for a function $k_{j, 0}$ such that for every borelian $\mathbf{A}$ of $\left(\mathbb{S}^1\right)^3$,
\[ \Proba_j (\mathcal{\mathbf{A}}) = \Inte\Inte\Inte k_{j, 0}(x, l, u) \mathds{1}_\mathbf{A} (x, l, u) \Proba_{0, 1} (\dd x , \dd l , \dd u). \]
We write $I_j (l, u) = \Inte f_j(t)\mathds{1}_{\{t\notin [l, u]\}}\dd t$ to make computation more clear.  
For $\mathbf{A}=A_1\times A_2 \times A_3$ a borel set of $\left(\mathbb{S}^1\right)^3$
\begin{align*}
\Proba_j(\mathbf{A}) &= \Inte\Inte\Inte \mathds{1}_\mathbf{A} (x, l, u) \Proba_j (\dd x , \dd l , \dd u) \\
&= \Inte \Inte \Inte  \mathds{1}_\mathbf{A} (x, l, u) f_j(x)  \mathds{1}_{\{x\in[l, u]\}}\Proba_{(L, U)}(\dd l , \dd u) \dd x  \\
&+ \Inte \Inte \mathds{1}_{A_2 \times A_3} (l, u) \delta_{-\pi}(A_1) I_i(l, u) g(l, u) \dd l \dd u\\
&= \Inte \Inte \Inte \mathds{1}_\mathbf{A} (x, l, u) \left( \frac{f_j(x)}{f_0(x)}\right) f_0(x)  \mathds{1}_{\{x\in[l, u]\}}\Proba_{(L, U)}(\dd l , \dd u) \dd x  \\
&+  \Inte \Inte \mathds{1}_{A_2 \times A_3} (l, u) \delta_{-\pi}(A_1) \left(\frac{I_1(l, u)}{I_0(l, u)}\right) I_0(l, u)\Proba_{(L, U)}(\dd l , \dd u)\\
&= \Inte\Inte\Inte \mathds{1}_\mathbf{A}(x, l, u) k_{j, 0}(x, l, u) \Proba_0(\dd x, \dd l, \dd u),
\end{align*}
if we define $k_{j, 0}(x, l, u) = \frac{f_j(x)}{f_0(x)}\mathds{1}_{\{x\ne -\pi\}} + \frac{I_j(l, u)}{I_0(l, u)}\mathds{1}_{\{x=-\pi\}}$.  

Now we can compute the $\chi^2$ divergence between $\Proba_j$ and $\Proba_0$
\begin{align*}
\chi^2(\Proba_j,\Proba_0) &= \int \left(\frac{\dd\Proba_j}{\dd\Proba_0} -1\right)^2 \dd\Proba_0
=\int \left(\frac{\dd\Proba_j - \dd\Proba_0}{\dd\Proba_0} \right)^2 \dd\Proba_0 \\
&= \Inte\Inte\Inte \left(\frac{f_j(x)-f_0(x)}{f_0(x)}\right)^2 f_0(x) \mathds{1}_{\{x\ne - \pi\}} \mathds{1}_{\{x\in [l, u]\}} \dd x \Proba_{(L, U)}(\dd l , \dd u) \\
&\quad + \Inte\Inte\Inte  \left(\frac{I_j(l, u) - I_0(l, u)}{I_0(l, u)}\right)^2 I_0(l, u) \Proba_{(L, U)}(\dd l , \dd u) \\
&= \Inte \frac{(f_i(x)-f_0(x))^2}{f_0(x)}\sigma(x) \dd x\\
&\quad+ \Inte \Inte \left(\Inte (f_j(t) - f_0(t))\mathds{1}_{\{t\notin [l, u]\}}) \dd t\right)^2 \frac{1}{I_0(l, u)}\Proba_{(L, U)}(\dd l , \dd u) \\
&\leq\Inte 2\pi \left(\sum_{l=1}^{\mathcal{K}}\gamma \omega_l^{(j)}\varphi_l (x)\right)^2 \dd x\\
&\quad + \Inte \Inte \left(\left[\Inte \left(\sum_{l=1}^{\mathcal{K}}\gamma \omega_l^{(j)}\varphi_l (t)\right)^2 \dd t\right] \underbrace{\left[\Inte\mathds{1}_{\{t\notin [l, u]\}}^2 \dd t\right]}_{= 2\pi I_0(l, u)}\right)\frac{1}{I_0(l, u)}\Proba_{(L, U)}(\dd l , \dd u) \\
&\leq 2\pi \gamma^2 \sum_{l=1}^{\mathcal{K}} (\omega_l^{(j)})^2 + \gamma^2\left( \sum_{l=1}^{\mathcal{K}} (\omega_l^{(i)})^2 \right)\Inte\Inte 2\pi\Proba_{(L, U)}(\dd l , \dd u)\\
&\leq 4\pi \gamma^2 \mathcal{K}
\leq 4\pi c_1 \mathcal{K}^{-2\beta -1}\mathcal{K}.
\end{align*}
To conclude the computations, we use a property of the $\chi^2$ divergence to deduce the distance between $\Proba_{j, n}$ and $\Proba_{0, n}$. Indeed we have
$\Proba_{j, n} = \Proba_j^{\otimes n}$ 
and $\Proba_{0, n} = \Proba_0^{\otimes n}$,
then using $\mathcal{K}\leq n^{\frac{1}{2\beta +1}} \leq \mathcal{K}+1$ and $\mathcal{K} \leq \frac{8\log(M)}{\log(2)}$, we obtain 
\begin{align*}
\chi^2(\Proba_{j, n}, \Proba_{0, n}) &= \prod_{i=1}^n (1+\chi^2(\Proba_j , \Proba_0))-1 = (1 + \chi^2(\Proba_j, \Proba_0))^n  -1\\
&= \exp\left(n \log\left[1+\chi^2(\Proba_j, \Proba_0)\right] \right) -1 \leq \exp(n\chi^2(\Proba_j, \Proba_0)) -1\\
&\leq \exp(4\pi c_1 \mathcal{K} \mathcal{K}^{-2\beta -1}n) -1 \leq \exp\left( \frac{2^{2\beta +6} \pi}{\log (2)}c_1 \log(M)\right) -1 .
\end{align*}
We want $\chi^2(\Proba_j, \Proba_0) \leq \frac{M}{4}$. If we take $c_1 \leq \frac{\log(2)}{2^{2\beta + 7}\pi}$ to have $\chi^2(\Proba_i, \Proba_0) \leq \exp\left( \frac{1}{2}\log(M)\right) - 1 = \sqrt{M} - 1 \leq \frac{M}{4}$. \\
So we take $c_1 = \min\left( \frac{R ^2}{(2\pi)^{2\beta}}, \frac{1}{4\pi^2}, \frac{\log(2)}{2^{2\beta +7}\pi} \right)$.
With this choice for $c_1$, the four assertions of Theorem 2.6 of \citelink{Tsybakov}{Tsybakov (2009)} are fulfilled and we deduce for any estimator $\hat{f}$,
\[ \max_{f\in \{f_0, \cdots, f_M\} } \Proba_f \left(\|\hat{f}-f\|_2 \geq An^{\frac{-\beta}{2\beta+1}}\right) \geq \frac{1}{2}\left(\frac{3}{4} - \frac{1}{M}\right), \]
which  implies the result of Theorem~\ref{BorneInf}.

\subsection{Lemmas on $\sigma$ and $\hat{\sigma}$.}

\subsubsection{A concentration inequality lemma.}\label{Preuve du lemme}

\begin{lemma}\label{Lemme}
Recall that the functions $\sigma$ and $\hat{\sigma}$ have been defined in \eqref{eq:def_sigma} and \eqref{eq:def_hat_sigma}. We have for all $y>0$,
\[ \Proba\left(\|\hat{\sigma} - \sigma\|_{\infty} \geq y\right) \leq 6 e^{-2n \frac{y^2}{9}}. \]
\end{lemma}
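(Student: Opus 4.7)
The plan is to reduce $\|\hat\sigma - \sigma\|_\infty$ to three one-dimensional empirical-CDF problems, each controlled by the Dvoretzky--Kiefer--Wolfowitz (DKW) inequality, so that the constants $6$ and $2/9$ appear naturally from a triangle inequality at level $y/3$ applied to three two-sided Hoeffding-type bounds.

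The crucial algebraic identity, valid on the parametrization $\cercle = [0, 2\pi)$, is
\[\mathds{1}_{\{x \in [L, U]\}} = \mathds{1}_{\{L \leq x\}} - \mathds{1}_{\{U < x\}}\mathds{1}_{\{L \leq U\}} + \mathds{1}_{\{U \geq x\}}\mathds{1}_{\{L > U\}}.\]
I would verify it by a case analysis depending on whether the arc wraps ($L > U$) or not ($L \leq U$), and on the position of $x$ within each case: in each of the six resulting sub-cases, the two sides take the same value. Taking expectations and averaging over the sample then yields $\sigma = F_L - G_2 + G_3$ and $\hat\sigma = \hat F_L - \hat G_2 + \hat G_3$ with
\[F_L(x) := \Proba(L \leq x), \quad G_2(x) := \Proba(U < x, L \leq U), \quad G_3(x) := \Proba(U \geq x, L > U),\]
and $\hat F_L, \hat G_2, \hat G_3$ their natural empirical counterparts.

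Each of these three processes is the empirical CDF (or empirical survival function) of an auxiliary real-valued i.i.d.\ sequence: directly $(L_i)$ for $\hat F_L$; the variables $V_i := U_i$ if $L_i \leq U_i$ and $V_i := +\infty$ otherwise for $\hat G_2$, so that $\hat G_2(x) = n^{-1}\sum_i \mathds{1}_{\{V_i < x\}}$; and an analogous construction for $\hat G_3$. The DKW inequality therefore gives, for each of the three,
\[\Proba\left(\sup_{x \in \cercle} |\hat F_L(x) - F_L(x)| \geq t\right) \leq 2 e^{-2nt^2},\]
and similarly for $G_2$ and $G_3$.

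Combining with the triangle inequality $\|\hat\sigma - \sigma\|_\infty \leq \sup|\hat F_L - F_L| + \sup|\hat G_2 - G_2| + \sup|\hat G_3 - G_3|$ and a union bound at level $t = y/3$ yields
\[\Proba\left(\|\hat\sigma - \sigma\|_\infty \geq y\right) \leq 3 \cdot 2\, e^{-2n (y/3)^2} = 6\, e^{-2n y^2/9},\]
which is the claim. The main obstacle is discovering the three-term decomposition: without a reduction to monotone pieces, a naive $\varepsilon$-net strategy would require controlling the oscillation of $\hat\sigma$ on small arcs, which is delicate without continuity assumptions on the law of $(L, U)$.
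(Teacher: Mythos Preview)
Your proof is correct and follows essentially the same strategy the paper alludes to: decompose $\mathds{1}_{\{x\in[L,U]\}}$ into three pieces, control each by a concentration inequality with bound $2e^{-2nt^2}$, and conclude by a union bound at $t=y/3$. The paper does not spell out the argument but cites Conanec (2025), noting that both Hoeffding's and DKW inequalities are used; this points to the slightly simpler decomposition
\[
\mathds{1}_{\{x\in[L,U]\}} = \mathds{1}_{\{L\le x\}} - \mathds{1}_{\{U<x\}} + \mathds{1}_{\{L>U\}},
\]
where the first two terms are empirical CDFs (DKW) and the third is constant in $x$ (Hoeffding for a Bernoulli average). Your decomposition instead splits that constant-in-$x$ part across the other terms so that all three pieces are CDF-type and DKW alone suffices. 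Both routes give the identical constant $6e^{-2ny^2/9}$, and your version is self-contained.
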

The proof uses Hoeffding's and Dvoretzky-Kiefer-Wolfowitz's inequalities and can be found in \citelink{Conanec-2025}{Conanec (2025)}.

\subsubsection{A useful inclusion}\label{Lemme sur xisigma0}

\begin{lemma}\label{lemmexisigma0}
Suppose Assumption (A) and let $\alpha \in (0, 1]$. If we define the random set $\Xi_{\alpha\sigma_0}=\left\{\omega\in \Omega, \|\hat{\sigma} - \sigma\|_{\infty} \leq \frac{\alpha\sigma_0}{2} \right\}$ then we have the following inclusion
\[  \Xi_{\alpha\sigma_0} \subset \left\{ \sup_{t\in \mathbb{L}^2(\cercle) , t\ne 0} \left| \frac{\|t\|^2_{n, \sigma}}{\|t\|^2_{2, \sigma}} - 1\right| \leq \frac{\alpha}{2} \right\}. \]
\end{lemma}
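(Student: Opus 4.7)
The plan is to show that, on the event $\Xi_{\alpha\sigma_0}$, the difference $\|t\|_{n,\sigma}^2 - \|t\|_{2,\sigma}^2$ is controlled by $\|\hat\sigma-\sigma\|_\infty$ times $\|t\|_2^2$, and then to turn the ordinary $L^2$ norm into the $\sigma$-weighted one using Assumption~(A). The uniformity over $t$ will come for free because everything factorizes linearly through $t^2$.

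More precisely, I would fix $\omega \in \Xi_{\alpha\sigma_0}$ and an arbitrary non-zero $t \in \mathbb{L}^2(\mathbb{S}^1)$. Writing
\[
\|t\|_{n,\sigma}^2 - \|t\|_{2,\sigma}^2 \;=\; \int_{\mathbb{S}^1} t^2(x)\bigl(\hat\sigma(x)-\sigma(x)\bigr)\,\mathrm{d}x,
\]
I bound the right-hand side in absolute value by $\|\hat\sigma-\sigma\|_\infty \,\|t\|_2^2$. Since we are on $\Xi_{\alpha\sigma_0}$, this is $\leq \frac{\alpha\sigma_0}{2}\|t\|_2^2$.

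Next I invoke Assumption~(A): $\sigma(x)\geq \sigma_0$ pointwise, hence $\|t\|_{2,\sigma}^2 = \int t^2 \sigma \geq \sigma_0 \|t\|_2^2$, i.e.\ $\|t\|_2^2 \leq \sigma_0^{-1}\|t\|_{2,\sigma}^2$. Substituting gives
\[
\bigl|\|t\|_{n,\sigma}^2 - \|t\|_{2,\sigma}^2\bigr| \;\leq\; \frac{\alpha\sigma_0}{2}\cdot\frac{\|t\|_{2,\sigma}^2}{\sigma_0} \;=\; \frac{\alpha}{2}\,\|t\|_{2,\sigma}^2,
\]
and dividing by $\|t\|_{2,\sigma}^2$ (which is strictly positive since $\sigma\geq \sigma_0>0$ and $t\not\equiv 0$) yields
\[
\left|\frac{\|t\|_{n,\sigma}^2}{\|t\|_{2,\sigma}^2} - 1\right| \leq \frac{\alpha}{2}.
\]
Taking the supremum over non-zero $t$ proves the inclusion.

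There is no real obstacle here: the argument is a two-line chain of inequalities. The only subtle point worth flagging is that $\|\cdot\|_{n,\sigma}$ is only a semi-norm whereas $\|\cdot\|_{2,\sigma}$ is a genuine norm, but this causes no trouble because the identity $\|t\|_{n,\sigma}^2 - \|t\|_{2,\sigma}^2 = \int t^2(\hat\sigma-\sigma)$ is a direct consequence of the definitions, and the crucial lower bound $\sigma\geq \sigma_0$ from Assumption~(A) ensures that $\|t\|_{2,\sigma}$ dominates $\|t\|_2$, which is exactly what lets us turn the crude bound $\|\hat\sigma-\sigma\|_\infty\|t\|_2^2$ into a bound relative to $\|t\|_{2,\sigma}^2$.
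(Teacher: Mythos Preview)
Your proof is correct and follows essentially the same approach as the paper: the paper bounds $|\hat\sigma(x)-\sigma(x)|\,t^2(x)$ pointwise by $\frac{\alpha\sigma_0}{2}t^2(x)\leq \frac{\alpha}{2}\sigma(x)t^2(x)$ and then integrates, whereas you integrate first and then use $\sigma_0\|t\|_2^2\leq\|t\|_{2,\sigma}^2$, but this is the same inequality applied in a different order.
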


\begin{proof}

Let $\omega\in\Xi_{\alpha\sigma_0}$. So we know that for all $x$ in $\cercle$, we have $ -\frac{\alpha\sigma_0}{2} \leq \hat{\sigma}(x) - \sigma(x) \leq \frac{\alpha\sigma_0}{2}$.
Let $t$ in $\mathbb{L}^2(\cercle)$. Thus we have for all $x$ in $\cercle, -\frac{\alpha\sigma_0}{2}t^2(x) \leq (\hat{\sigma}(x) - \sigma(x))t^2(x) \leq \frac{\alpha\sigma_0}{2}t^2(x)$. We then use the fact $\sigma_0 \leq \sigma(x)$ for all $x$ in $\cercle$. This fact used with the last inequalities means that for all $x$ in $\cercle$,
\[ \frac{-\alpha\sigma(x)}{2}t^2(x) \leq -\frac{\alpha\sigma_0}{2}t^2(x) \leq t^2(x)\hat{\sigma}(x) - t^2(x)\sigma(x) \leq \frac{\alpha\sigma_0}{2}t^2(x) \leq \frac{\alpha\sigma(x)}{2}t^2(x).\]
Next we integrate over $\cercle$ and by the definition of the norms $\|\cdot\|_{n,\sigma}$ and $\|\cdot\|_{2,\sigma}$ we have
\[ -\frac{\alpha\|t\|_{2, \sigma}^2}{2} \leq \|t\|_{n, \sigma}^2 - \|t\|_{2, \sigma}^2 \leq \frac{\alpha\|t\|_{2, \sigma}^2}{2}.\]
Meaning that
\[ \left| \|t\|_{n, \sigma}^2 - \|t\|_{2, \sigma}^2 \right| \leq \frac{\alpha\|t\|_{2, \sigma}^2}{2}.\]
If $t\ne 0$ we finally obtain
\[\left| \frac{\|t\|^2_{n, \sigma}}{\|t\|^2_{2, \sigma}} -1\right| \leq \frac{\alpha}{2}.\]
\end{proof}

\subsection{Lemmas on $G_m$ and $\hat{G}_m$.}

\subsubsection{Proof of Lemma~\ref{def_coefs}}\label{preuve_def_coefs}
\begin{proof}
We recall the definition of the contrast
\[ \zeta(t) := \frac{1}{n}\sum_{i=1}^n \left(\Inte t^2(x)\mathds{1}_{x\in[L_i, U_i]} \dd x - 2\Delta_i t(X_i') \right).\]
Set $t\in S_m$, so $t=\sum_{\indices} b_{\lambda}\philambda = {}^t B \overrightarrow{\varphi_m}$ where $B := (b_{\lambda})_{\indices} = (<t, \philambda>_2)_{\indices}$ and $\overrightarrow{\varphi_m} = (\philambda)_{\indices}$. We note that the first term of the contrast is exactly equal to $\|t\|_{n, \sigma}^2 = {}^t B  \hat{G}_m B$. For the second term we recall the definition of the vector $\hat{U}_m$ 
\begin{equation}\label{eq:Def_U_chapeau}
\hat{U}_m = \left(\frac{1}{n}\sum_{i=1}^n \Delta_i \philambda(X_i)\right)_{\indices},
\end{equation}
which entails that the second term of the contrast is equal to $-2 \hspace{2pt} {}^t B \hat{U}_m$. This gives 
\begin{equation}\label{eq:Contraste}
 \zeta(t) =   {}^t B  \hat{G}_m B - 2\hspace{2pt} {}^t B \hat{U}_m .
 \end{equation}

We evaluate the contrast as a function of the Fourier coefficients of $t$ and identify the coefficients for which its gradient vanishes. Using the expression of $t$ in the basis $ (\philambda)_{\indices}$ we have
\begin{align*}
\zeta(t) &= \frac{1}{n}\sum_{i=1}^n \left( \sum_{\lambda,\lambda' \in \Lambda_m} b_{\lambda}b_{\lambda'}\int_{[L_i, U_i]} \varphi_{\lambda}(x) \varphi_{\lambda'}(x) \dd x - 2\sum_{\indices}\Delta_i b_{\lambda}\philambda(X_i') \right).
\end{align*}
For a $\lambda$ fixed in $\Lambda_m$, computing the partial derivative with respect to $b_{\lambda}$ gives 
\begin{align*}
\frac{\partial \zeta(t)}{\partial b_{\lambda}} &= \frac{1}{n}\sum_{i=1}^n \left( 2 b_{\lambda} \int_{[L_i, U_i]} \varphi_{\lambda}^2(x) \dd x + 2\sum_{\lambda' \ne \lambda} b_{\lambda'}\int_{[L_i, U_i]} \varphi_{\lambda}(x) \varphi_{\lambda'}(x) \dd x - 2\Delta_i\philambda (X_i')\right).\\
&= \frac{1}{n}\sum_{i=1}^n \left( 2\sum_{\lambda' } b_{\lambda'}\int_{[L_i, U_i]} \varphi_{\lambda}(x) \varphi_{\lambda'}(x) \dd x - 2\Delta_i\philambda (X_i')\right).
\end{align*}
We recall from~\eqref{eq:estimateur_argmin}
\[ \mathring{f}_m = \argmin{t\in S_m} \hspace{3pt} \zeta(t) = {}^t \hat{A}_m \overrightarrow{\varphi_m} = \sum_{\indices} \hat{a}_\lambda \philambda.\]
Since it is defined as a minimum on $\mathbb{R}^{D_m}$ we have  $\frac{\partial \zeta(\mathring{f}_m)}{\partial \hat{a}_{\lambda}} =0$ for all $\indices$.
We have the following equivalences:
\begin{align*}
\frac{\partial \zeta(\mathring{f}_m)}{\partial \hat{a}_{\lambda}} = 0 &\Leftrightarrow 
\frac{1}{n}\sum_{i=1}^n \sum_{\lambda' } \hat a_{\lambda'}\int_{[L_i, U_i]} \varphi_{\lambda}(x) \varphi_{\lambda'}(x) \dd x =\frac{1}{n}\sum_{i=1}^n \Delta_i\philambda (X_i')\\
&\Leftrightarrow \sum_{\lambda' \in \Lambda_m} (\hat{G}_m)_{\lambda \lambda'} \hat{a}_{\lambda'} = (\hat{U}_m)_{\lambda}.
\end{align*}
These computations entail that $\hat{A}_m$ satisfies the equality $\hat{G}_m\hat{A}_m = \hat{U}_m$. Since $\hat{G}_m$ is assumed to be invertible, the coefficients of $\mathring{f}_m$ are defined as 
\begin{equation*}
\hat{A}_m= \hat{G}_m^{-1}\hat{U}_m.
\end{equation*}
This is the result announced in the lemma.
\end{proof}

\subsubsection{Norm inequalities}\label{equivalence norms}
We recall below the relations between the various norms introduced in Section~\ref{SectionResultats} that will be useful in the sequel.
\begin{lemma}\label{ineq_de_normes}
 Set $t\in\mathbb{L}^2(\cercle)$. If $t\in S_m$ then we can write $t=\sum_{\indices} a_{\lambda}\philambda = {}^t A \overrightarrow{\varphi_m} $ for $A=(a_\lambda)_{\indices}$ and $\overrightarrow{\varphi_m} = (\philambda)_{\indices}$ and we obtain the last equalities of each lines
\[ \|t\|_2^2 = \Inte t^2(x) \dd x = {}^t AA,\]
\[ \|t\|_{2, \sigma}^2 = \Inte t^2(x) \sigma(x) \dd x = {}^t A G_m A,\]
\[ \|t\|_{n, \sigma}^2 = \frac{1}{n}\sum_{i=1}^n \Inte t^2(x) \mathds{1}_{[L_i, U_i]}(x) \dd x = \Inte t^2(x) \hat{\sigma}(x) \dd x = {}^t A \hat{G}_m A, \]
where the matrices $G_m$ and $\hat{G}_m$ are defined in \eqref{eq:Def_G} and \eqref{eq:Def_G_chapeau}.\\
For all $\omega$ in $\Omega$ and for all $t$ in $\mathbb{L}^2(\cercle)$ we have
\begin{equation}\label{eq:lien norme 2sigma et norme 2}
 \sigma_0\|t\|_2^2\leq \|t\|_{2, \sigma}^2 \leq \|t\|_2^2 ,
 \end{equation}
\begin{equation}\label{eq:lien norme nsigma et norme 2}
 \|t\|_{n, \sigma}^2 \leq \|t\|_2^2.
\end{equation}
where the first inequality of \eqref{eq:lien norme 2sigma et norme 2} is true only if Assumption (A) is verified.\\
Moreover if Assumption (A) is verified and $\omega \in \Xi_{\sigma_0}$,  with $ \Xi_{\sigma_0}$ defined  in Lemma~\ref{lemmexisigma0}, then we have
\begin{equation}\label{eq:lien norme 2sigma et norme nsigma}
\frac{1}{2}\|t\|_{2, \sigma}^2 \leq \|t\|_{n, \sigma}^2 \leq \frac{3}{2}\|t\|_{2, \sigma}^2 .
\end{equation}
\end{lemma}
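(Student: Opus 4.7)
The plan is to dispatch the three blocks of claims in order, each of which is essentially a direct unpacking of definitions combined with one elementary observation.

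First, I would handle the matrix identities for $t = \sum_{\lambda \in \Lambda_m} a_\lambda \varphi_\lambda \in S_m$ by bilinear expansion. For any of the three (semi-)inner products involved, one has $\|t\|^2_{\star} = \sum_{\lambda,\lambda'} a_\lambda a_{\lambda'}\langle \varphi_\lambda,\varphi_{\lambda'}\rangle_{\star}$. When $\star = \langle \cdot,\cdot\rangle_2$, the fact that $(\varphi_\lambda)_{\lambda\in\Lambda_m}$ is orthonormal for $\|\cdot\|_2$ collapses the double sum to $\sum_\lambda a_\lambda^2 = {}^tAA$. When $\star = \langle \cdot,\cdot\rangle_{2,\sigma}$ or $\star = \langle \cdot,\cdot\rangle_{n,\sigma}$, definitions \eqref{eq:Def_G} and \eqref{eq:Def_G_chapeau} identify the double sum as ${}^tA G_m A$ and ${}^tA \hat G_m A$ respectively, and the intermediate integral expressions are just restatements of the definitions of $\|\cdot\|_{2,\sigma}$ and $\|\cdot\|_{n,\sigma}$.

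Second, for the pointwise bounds \eqref{eq:lien norme 2sigma et norme 2} and \eqref{eq:lien norme nsigma et norme 2}, I would use that $\sigma(x) = \mathbb{P}(x\in[L,U])$ is a probability, so $0 \leq \sigma(x) \leq 1$ for every $x \in \mathbb{S}^1$; under Assumption (A) we additionally have $\sigma(x) \geq \sigma_0$. Similarly $\hat\sigma(x) = \frac{1}{n}\sum_{i=1}^n \mathds{1}_{\{x\in[L_i,U_i]\}} \in [0,1]$ for every $\omega$. Multiplying by $t^2(x) \geq 0$ and integrating over $\mathbb{S}^1$ yields simultaneously $\sigma_0\|t\|_2^2 \leq \|t\|_{2,\sigma}^2 \leq \|t\|_2^2$ and $\|t\|_{n,\sigma}^2 \leq \|t\|_2^2$, proving both \eqref{eq:lien norme 2sigma et norme 2} and \eqref{eq:lien norme nsigma et norme 2}.

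Third, the two-sided bound \eqref{eq:lien norme 2sigma et norme nsigma} on $\Xi_{\sigma_0}$ is just Lemma~\ref{lemmexisigma0} applied with $\alpha = 1$. On $\Xi_{\sigma_0}$, for $t \in \mathbb{L}^2(\mathbb{S}^1)$ with $t \neq 0$, that lemma gives $\bigl|\|t\|_{n,\sigma}^2/\|t\|_{2,\sigma}^2 - 1\bigr| \leq 1/2$, which rearranges immediately to $\tfrac12\|t\|_{2,\sigma}^2 \leq \|t\|_{n,\sigma}^2 \leq \tfrac32\|t\|_{2,\sigma}^2$; the case $t = 0$ is trivial because both norms vanish.

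There is essentially no serious obstacle. The only nonautomatic observation is the upper bound $\sigma(x)\leq 1$ (hence $\hat\sigma(x)\leq 1$ in expectation and pointwise), which relies on reading \eqref{eq:def_sigma} as a probability rather than a generic positive function; without this remark the upper inequalities in \eqref{eq:lien norme 2sigma et norme 2} and \eqref{eq:lien norme nsigma et norme 2} would not be available and the subsequent variance/bias calculations in Theorem~\ref{MISE du deuxieme estimateur} would not close. Everything else is bookkeeping in the definitions and a direct call to Lemma~\ref{lemmexisigma0}.
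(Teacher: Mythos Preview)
Your proposal is correct and follows essentially the same approach as the paper: the inequalities \eqref{eq:lien norme 2sigma et norme 2} and \eqref{eq:lien norme nsigma et norme 2} come from the pointwise bounds $\sigma_0\leq\sigma(x)\leq 1$ and $\hat\sigma(x)\leq 1$, and \eqref{eq:lien norme 2sigma et norme nsigma} from Lemma~\ref{lemmexisigma0} with $\alpha=1$. The only cosmetic difference is that the paper cites the intermediate step inside the proof of Lemma~\ref{lemmexisigma0} (namely $-\tfrac12\|t\|_{2,\sigma}^2\leq\|t\|_{n,\sigma}^2-\|t\|_{2,\sigma}^2\leq\tfrac12\|t\|_{2,\sigma}^2$) rather than its statement, and does not bother to spell out the matrix identities you handle in your first block.
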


\begin{proof}
Because we know for all $x$ in $\cercle , \sigma(x) \leq 1$, we have
\[ \|t\|^2_{2, \sigma} = \Inte t^2(x) \sigma(x) \dd x \leq \Inte t^2(x) \dd x = \|t\|_2^2,\]
and with Assumption (A) verified we have for all $x$ in $\cercle ,  \sigma_0 \leq \sigma(x) \leq 1$ then
\[ \|t\|^2_{2, \sigma} = \Inte t^2(x) \sigma(x) \dd x \geq \Inte t^2(x)\sigma_0 \dd x = \sigma_0\|t\|_2^2.\]
In an similar way, we know for all $x$ in $\cercle,  \hat{\sigma}(x) \leq 1$, we have
\[ \|t\|^2_{n, \sigma} = \Inte t^2(x) \hat{\sigma}(x) \dd x \leq \Inte t^2(x) \dd x = \|t\|_2^2.\]
As shown in the proof of Lemma~\ref{lemmexisigma0}, if $\omega$ is in $\Xi_{\sigma_0}$ we have
\[ -\frac{\|t\|_{2, \sigma}^2}{2} \leq \|t\|_{n, \sigma}^2 - \|t\|_{2, \sigma}^2 \leq \frac{\|t\|_{2, \sigma}^2}{2}, \]
and thus we have
\[\frac{1}{2} \|t\|_{2, \sigma}^2\leq \|t\|_{n, \sigma}^2 \leq \frac{3}{2}\|t\|_{2, \sigma}^2. \]
\end{proof}

\subsubsection{Proof of Lemma~\ref{inversible}}\label{inversion}
 Set $m\in \mathcal{M}_n$. Let us  deal first with the matrix $G_m$. Recall that ${G}_m$ is a square matrix of dimension $D_m \times D_m$ where, for $(\philambda)_{\indices}$ an orthonormal basis of $S_m$ for the norm $\|\cdot\|_2$,
\[ ({G}_m)_{jk} =  \Inte \varphi_j(x) \varphi_k(x) \sigma(x) \dd x = <\varphi_j, \varphi_k >_{2, \sigma}. \]
It is a Gram matrix for the vectors $(\philambda)_{\indices}$  in $S_m$ for the scalar product $<\cdot, \cdot>_{2,\sigma}$. As a Gram matrix it is semi-definite positive. We want to see if it is definite and thus invertible.
Let $b \in \mathbb{R}^{D_m}$. Suppose ${}^t b G_m b = 0$. \\
Let $t = \sum_{\indices} b_\lambda \philambda$. We have if Assumption (A) is verified:
\[ {}^t b G_m b = \|t\|_{2, \sigma}^2 \geq \sigma_0 \|t\|_2^2, \]
where we used \eqref{eq:lien norme 2sigma et norme 2} for the lower bound. This tells us that $\|t\|_2^2 = 0$, thus $t=0$ a.e. and so $b=0$.
This means that $G_m$ is invertible under Assumption (A).\\
Now let us consider  $\hat{G}_m$. We remind that  it is a square matrix of dimension $D_m \times D_m$ where, for $(\philambda)_{\indices}$ an orthonormal basis of $S_m$ for the norm $\|\cdot\|_2$,
\[ (\hat{G}_m)_{jk} = \frac{1}{n}\sum_{i=1}^n \int_{[L_i, U_i]} \varphi_j(x) \varphi_k(x) \dd x = <\varphi_j, \varphi_k >_{n, \sigma}. \]
This matrix is symmetric positive semi-definite as $\|\cdot\|_{n,\sigma}$ is a semi-norm. We want to see if it is definite and thus invertible.
Let $b \in \mathbb{R}^{D_m}$. Suppose ${}^t b \hat{G}_m b = 0$. \\
Let $t = \sum_{\indices} b_\lambda \philambda$. We have
\[ {}^t b \hat{G}_m b = \|t\|_{n, \sigma}^2 = \frac{1}{n}\sum_{i=1}^n \int_{[L_i, U_i]} t^2(x) \dd x. \]
Since this sum is equal to zero and each term is nonnegative we know that 
\[ \forall i\in \{ 1, \dots ,  n \}, \int_{[L_i, U_i]} t^2(x) \dd x =0.\]
Because $\Proba(L\ne U) = 1$, we know that a.s the interval $[L_i,U_i]$ has a positive length. Thus a.s $t$ is equal to zero on a non null set $E = \bigcup_{i=1}^n [L_i,U_i]$.
%
%
Since $t$ is a linear combination of trigonometric functions, we can write $t(x) = \sum_{\indices} b_\lambda \philambda (x) = \sum_{k=-D_m}^{D_m} c_k e^{ikx}$ where for $j>0, c_j = \frac{b_{2j-1} - ib_{2j}}{2}$ , $c_{-j}= \frac{b_{2j-1} + ib_{2j}}{2}$ and $c_0 = b_0$.\\
So we have $t(x) = e^{-imx}\sum_{k=-D_m}^{D_m} c_k e^{i(k+m)x} = e^{-imx}Q\left(e^{ix}\right)$ where $Q$ is a polynomial of degree at most $2m$. Such a polynomial cannot have more than $2m$ zeros unless it is the zero polynomial. Here $t$ admits an infinity of zeros since it is equal to zero on a non null set. Thus $t$ is the zero polynomial, i.e all $c_j$ are equal to zero and so are all the $b_\lambda$. \\
So we showed that ${}^t b \hat{G}_m b = 0$ implied $b=0$, this means that $\hat{G}_m$ is invertible almost surely.

\subsubsection{Results on the operator norm}\label{normes}
We recall that $\|M\|_{op}$ the operator norm of the matrix $M$ is the square root of the greatest eigenvalue of the matrix ${}^t MM$. Thanks to their definitions in \eqref{eq:Def_G} and \eqref{eq:Def_G_chapeau}, we know the matrices $G_m$ and $\hat{G}_m$ are squared and symmetric and semi-positive definite. This allows us to write $\|G_m\|_{op} = \sup_{x\in\mathbb{R}^{D_m}, x\ne 0} \frac{{}^t x G_m x}{{}^t xx}$.
Moreover they are positive-definite matrices as they are invertible if Assumption (A) is verified (see Lemma~\ref{inversible}). We can establish the following result.
\begin{lemma}
We have the following inequalities,
\begin{equation}\label{eq: normop G inverse}
\|G_m^{-1}\|_{op} = \sup_{x\in\mathbb{R}^{D_m}, x\ne 0} \frac{{}^t xx}{{}^t x G_m x} = \sup_{t\in S_m, t\ne 0} \frac{\|t\|_2^2}{\|t\|_{2, \sigma}^2},
\end{equation}

\begin{equation}\label{eq: normop hatG inverse}
 \|\hat{G}_m^{-1}\|_{op} = \sup_{x\in\mathbb{R}^{D_m}, x\ne 0} \frac{{}^t xx}{{}^t x \hat{G}_m x} = \sup_{t\in S_m, t\ne 0} \frac{\|t\|_2^2}{\|t\|_{n, \sigma}^2},
 \end{equation}
If Assumption (A) is verified then,
\begin{equation}\label{eq: minoration norme op hatGm}
 \|\hat{G}_m^{-1}\|_{op} \geq \sigma_0\|G_m^{-1}\|_{op},
 \end{equation}
 \begin{equation}\label{eq: majoration norme op Gm}
 \|G_m^{-1}\|_{op} \leq \frac{1}{\sigma_0}.
 \end{equation}
 Moreover on $\Xi_{\sigma_0}$, defined in Lemma~\ref{lemmexisigma0}
\begin{equation}\label{eq:majoration normeop hatG}
\|\hat{G}_m^{-1}\|_{op} \leq 2\|G_m^{-1}\|_{op} \leq \frac{2}{\sigma_0}.
\end{equation}

\end{lemma}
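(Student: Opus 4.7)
The plan is to convert each matrix operator norm into a supremum over the finite-dimensional space $S_m$ via the orthonormality of $(\varphi_\lambda)_{\lambda\in\Lambda_m}$, and then to plug in the norm inequalities collected in Lemma~\ref{ineq_de_normes}. This reduces the whole lemma to routine manipulation of suprema.

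For \eqref{eq: normop G inverse} and \eqref{eq: normop hatG inverse}, I would first recall that since $G_m$ is symmetric positive-definite, the spectral theorem yields $\|G_m^{-1}\|_{op} = 1/\lambda_{\min}(G_m)$, and the Rayleigh quotient characterization of $\lambda_{\min}$ gives
\[
\|G_m^{-1}\|_{op} = \sup_{x\in\mathbb{R}^{D_m},\,x\ne 0} \frac{{}^t xx}{{}^t x G_m x}.
\]
Then for any nonzero $t = \sum_{\lambda\in\Lambda_m} b_\lambda \varphi_\lambda \in S_m\setminus\{0\}$, orthonormality of the basis for $\|\cdot\|_2$ gives ${}^t bb = \|t\|_2^2$, while Lemma~\ref{ineq_de_normes} (the matrix identities at the beginning) gives ${}^t b G_m b = \|t\|_{2,\sigma}^2$. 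The bijection $b\leftrightarrow t$ between $\mathbb{R}^{D_m}\setminus\{0\}$ and $S_m\setminus\{0\}$ then yields the second equality of \eqref{eq: normop G inverse}. The same argument, using $\hat G_m$ invertible a.s.\ by Lemma~\ref{inversible} and the identity ${}^t b \hat G_m b = \|t\|_{n,\sigma}^2$, gives \eqref{eq: normop hatG inverse}.

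For \eqref{eq: majoration norme op Gm}, I would use \eqref{eq: normop G inverse} and the inequality $\|t\|_{2,\sigma}^2 \ge \sigma_0 \|t\|_2^2$ from \eqref{eq:lien norme 2sigma et norme 2} (valid under Assumption (A)), which gives $\|t\|_2^2/\|t\|_{2,\sigma}^2 \le 1/\sigma_0$ for every nonzero $t\in S_m$, hence $\|G_m^{-1}\|_{op}\le 1/\sigma_0$. For \eqref{eq: minoration norme op hatGm}, the key observation is that \eqref{eq:lien norme nsigma et norme 2} gives $\|t\|_{n,\sigma}^2 \le \|t\|_2^2$, so by \eqref{eq: normop hatG inverse},
\[
\|\hat G_m^{-1}\|_{op} = \sup_{t\in S_m\setminus\{0\}} \frac{\|t\|_2^2}{\|t\|_{n,\sigma}^2} \ge 1,
\]
and by \eqref{eq: majoration norme op Gm}, $\sigma_0 \|G_m^{-1}\|_{op} \le 1$, which chains to give \eqref{eq: minoration norme op hatGm}. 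Finally, for \eqref{eq:majoration normeop hatG}, I would restrict to $\omega\in\Xi_{\sigma_0}$, where \eqref{eq:lien norme 2sigma et norme nsigma} gives $\|t\|_{n,\sigma}^2\ge \tfrac12\|t\|_{2,\sigma}^2$; dividing $\|t\|_2^2$ by this inequality and taking the supremum yields $\|\hat G_m^{-1}\|_{op}\le 2\|G_m^{-1}\|_{op}$, and the final $\le 2/\sigma_0$ is just \eqref{eq: majoration norme op Gm}.

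No step is genuinely difficult here; the only subtle point is that \eqref{eq: minoration norme op hatGm} is not obtained by ``chaining'' pointwise comparisons between $\hat\sigma$ and $\sigma$ (which would require an event like $\Xi_{\sigma_0}$), but rather by noting that $\|\hat G_m^{-1}\|_{op}\ge 1$ deterministically from $\hat\sigma\le 1$, and $\sigma_0\|G_m^{-1}\|_{op}\le 1$ deterministically from Assumption (A). Writing this correctly is the one place where a reader could easily stumble, so I would emphasize it.
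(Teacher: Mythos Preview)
Your proof is correct and follows essentially the same approach as the paper. The only cosmetic difference is that for \eqref{eq: normop G inverse} the paper uses the change of variable $y=G_m^{-1/2}x$ to pass from $\sup_x \frac{{}^t x G_m^{-1}x}{{}^t xx}$ to $\sup_y \frac{{}^t yy}{{}^t y G_m y}$, whereas you invoke the Rayleigh quotient characterization of $\lambda_{\min}(G_m)$; these are equivalent one-line arguments, and your handling of \eqref{eq: minoration norme op hatGm} via the chain $\|\hat G_m^{-1}\|_{op}\ge 1\ge \sigma_0\|G_m^{-1}\|_{op}$ is exactly what the paper does.
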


\begin{proof}
For \eqref{eq: normop G inverse}, we have
\begin{align*}
\|G_m^{-1}\|_{op} &= \sup_{x\in\mathbb{R}^{D_m}, x\ne 0} \frac{{}^t x G_m^{-1}x}{{}^t xx} 
= \sup_{x\in\mathbb{R}^{D_m}, x\ne 0} \frac{\|G_m^{-1/2}x\|_2^2}{\|x\|_2^2}
= \sup_{y\in\mathbb{R}^{D_m}, y\ne 0} \frac{\|y\|_2^2}{\|G_m^{1/2}y\|_2^2}\\
& = \sup_{y\in\mathbb{R}^{D_m}, y\ne 0} \frac{{}^t yy}{{}^t y G_m y} =  \sup_{t\in S_m, t\ne 0} \frac{\|t\|_2^2}{\|t\|_{2, \sigma}^2},
\end{align*}
where we write $y=G_m^{-1/2} x$. An analogous computation applies for \eqref{eq: normop hatG inverse}.\\
To prove \eqref{eq: minoration norme op hatGm}, we use  \eqref{eq: normop hatG inverse} and \eqref{eq:lien norme 2sigma et norme 2}
 \begin{align*}
\|\hat{G}_m^{-1}\|_{op} &=  \sup_{t\in S_m} \frac{\|t\|_2^2}{\|t\|_{n, \sigma}^2}
\geq  \sup_{t\in S_m, t\ne 0} \frac{\|t\|_2^2}{\|t\|_2^2} \\
&\geq \sup_{t\in S_m, t\ne 0} \frac{\|t\|_2^2}{\frac{1}{\sigma_0}\|t\|_{2, \sigma}^2} 
\geq \sigma_0 \|G_m^{-1}\|_{op}.
\end{align*}
To prove \eqref{eq: majoration norme op Gm}, we use  \eqref{eq:lien norme 2sigma et norme 2}
 \begin{align*}
\|G_m^{-1}\|_{op} &=  \sup_{t\in S_m} \frac{\|t\|_2^2}{\|t\|_{2, \sigma}^2} 
\leq  \sup_{t\in S_m, t\ne 0} \frac{\|t\|_2^2}{\sigma_0\|t\|_2^2} 
\leq \frac{1}{\sigma_0}. 
\end{align*}
To prove the upper bound on $\Xi_{\sigma_0}$ we use \eqref{eq: normop hatG inverse}, \eqref{eq:lien norme 2sigma et norme nsigma} and \eqref{eq: majoration norme op Gm}, we have
\begin{align*}
\|\hat{G}_m^{-1}\|_{op} &=  \sup_{t\in S_m} \frac{\|t\|_2^2}{\|t\|_{n,\sigma}^2} 
\leq  \sup_{t\in S_m, t\ne 0} \frac{\|t\|_2^2}{\frac{1}{2}\|t\|_{2,\sigma}^2} 
= 2 \underbrace{\sup_{t\in S_m, t\ne 0} \frac{\|t\|_2^2}{\|t\|_{2,\sigma}^2}}_{=\|G_m^{-1}\|_{op}}
\leq \frac{2}{\sigma_0}.
\end{align*}

\end{proof}

\subsubsection{An inequality between norms}\label{lemmechangementdebase}

\begin{lemma}\label{changement de base}

Let $(\philambda)_{\indices}$ be the orthonormal basis of $S_m$ for the dot product \\$<\cdot,\cdot>_{2}$ and $(\tilde{\varphi}_\lambda)_{\indices}$ be its orthonormal basis of $S_m$ for the dot product $<\cdot, \cdot>_{2, \sigma}$. Then we have
 \[\left\|\sum_{\indices}\tilde{\varphi}_\lambda^2 \right\|_\infty \leq \|G_m^{-1}\|_{op}\left\|\sum_{\indices}\philambda^2 \right\|_\infty .\]
\end{lemma}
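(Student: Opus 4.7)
My plan is to expand $\sum_\lambda \tilde\varphi_\lambda^2(x)$ as a quadratic form in the vector $v(x):=(\varphi_\lambda(x))_{\lambda\in\Lambda_m}$, the matrix of the form being (up to change of basis) $G_m^{-1}$. The key inputs are the definition of $G_m$ together with the defining property of the operator norm, namely $y^{\top} A y \le \|A\|_{op}\, {}^t y y$ for any symmetric positive semi-definite matrix $A$.

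First I would introduce the change-of-basis matrix $P\in\mathbb R^{D_m\times D_m}$ defined by
\[
\tilde\varphi_\lambda \;=\; \sum_{\mu\in\Lambda_m} P_{\lambda\mu}\, \varphi_\mu, \qquad \lambda\in\Lambda_m.
\]
Imposing the $<\cdot,\cdot>_{2,\sigma}$-orthonormality of $(\tilde\varphi_\lambda)_{\lambda}$ and using the definition \eqref{eq:Def_G} of $G_m$, one gets
\[
\delta_{\lambda\lambda'} \;=\; <\tilde\varphi_\lambda,\tilde\varphi_{\lambda'}>_{2,\sigma} \;=\; \sum_{\mu,\mu'} P_{\lambda\mu}\, (G_m)_{\mu\mu'}\, P_{\lambda'\mu'} \;=\; (P\, G_m\, {}^t P)_{\lambda\lambda'},
\]
so that $P\, G_m\, {}^t P = I$, which rearranges to ${}^t P P = G_m^{-1}$. (The matrix $G_m$ is invertible under Assumption (A) by Lemma~\ref{inversible}, so this identity is meaningful.)

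Next I would pointwise expand the squared norm of $v(x)$ in the new basis. Fixing $x\in\cercle$ and setting $v(x)=(\varphi_\lambda(x))_\lambda$,
\[
\sum_{\lambda\in\Lambda_m}\tilde\varphi_\lambda^2(x) \;=\; \sum_{\lambda}\Bigl(\sum_{\mu} P_{\lambda\mu}\,\varphi_\mu(x)\Bigr)^{\!2} \;=\; \sum_{\mu,\mu'} ({}^t P P)_{\mu\mu'}\,\varphi_\mu(x)\varphi_{\mu'}(x) \;=\; {}^t v(x)\, G_m^{-1}\, v(x).
\]
Since $G_m^{-1}$ is symmetric positive definite, the operator norm controls this quadratic form:
\[
{}^t v(x)\, G_m^{-1}\, v(x) \;\le\; \|G_m^{-1}\|_{op}\; {}^t v(x) v(x) \;=\; \|G_m^{-1}\|_{op} \sum_{\lambda\in\Lambda_m}\varphi_\lambda^2(x).
\]
Taking the essential supremum over $x\in\cercle$ on both sides yields the announced inequality. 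I do not expect any real obstacle here; the only point requiring a little care is identifying ${}^t P P$ with $G_m^{-1}$, which is immediate from the orthonormality condition written in coordinates.
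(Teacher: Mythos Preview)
Your proof is correct and follows essentially the same approach as the paper: introduce the change-of-basis matrix (your $P$, the paper's $A$), deduce ${}^t P P = G_m^{-1}$ from the $<\cdot,\cdot>_{2,\sigma}$-orthonormality, rewrite $\sum_\lambda \tilde\varphi_\lambda^2(x)$ as the quadratic form ${}^t v(x)\,G_m^{-1}\,v(x)$, and bound it by $\|G_m^{-1}\|_{op}\sum_\lambda \varphi_\lambda^2(x)$ before taking the supremum. The only difference is cosmetic---you write out the index computations explicitly whereas the paper uses vector notation.
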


\begin{proof}

We write $\overrightarrow{\tilde{\varphi}_m} = {}^t(\tilde{\varphi}_1 , \dots , \tilde{\varphi}_{D_m})$ and $\overrightarrow{\varphi_m} = {}^t(\varphi_1 , \dots , \varphi_{D_m})$.
Since they are orthonomal bases of the same space but for two different norms, there exists a transition matrix $A$ such that $\overrightarrow{\tilde{\varphi}_m} = A
\overrightarrow{\varphi_m}$.
By the orthonormal property of the basis $(\tilde{\varphi}_1 , \dots , \tilde{\varphi}_{D_m})$ we know that
\[ \Inte \overrightarrow{\tilde{\varphi}_m}(x) \hspace{1pt} {}^t\overrightarrow{\tilde{\varphi}_m}(x) \sigma(x) \dd x = I_{D_m}. \]
On the other hand we have
\[ \Inte \overrightarrow{\tilde{\varphi}_m}(x)\hspace{1pt} {}^t \overrightarrow{\tilde{\varphi}_m}(x) \sigma(x) \dd x = A \left( \Inte \overrightarrow{\varphi_m}(x) \hspace{1pt} {}^t \overrightarrow{\varphi_m}(x) \sigma(x) \dd x   \right) {}^t A =  A G_m  {}^tA. \]
Thus we have ${}^t A A = G_m^{-1}$. So we have
\begin{align*}
\sum_{\indices} \tilde{\varphi}_\lambda^2(x) &= {}^t\overrightarrow{\tilde{\varphi}_m}(x) \hspace{1pt} \overrightarrow{\tilde{\varphi}_m}(x) 
= {}^t\overrightarrow{\varphi_m}(x) \hspace{1pt}  {}^t A A  \hspace{1pt} \overrightarrow{\varphi_m}(x) \\&
= {}^t\overrightarrow{\varphi_m}(x) G_m^{-1} \hspace{1pt} \overrightarrow{\varphi_m}(x) 
\leq \|G_m^{-1}\|_{op} \sum_{\indices} \philambda^2(x).
\end{align*}
This final upper bound gives the inequality we wanted.

\end{proof}

\subsubsection{A random set inclusion lemma}\label{un autre lemme}

\begin{lemma}\label{lemme pour la fin}
If Assumption (A) is verified then, for $\alpha \leq 1$,

\[ \{\|G_m^{-1} - \hat{G}_m^{-1}\|_{op} > \alpha\|G_m^{-1}\|_{op}\} \subset \left\{\|\hat{\sigma} - \sigma\|_\infty > \frac{\sigma_0 \alpha}{2} \right\} = \Xi_{\alpha\sigma_0}^c.\]

\end{lemma}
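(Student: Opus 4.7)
The plan is to prove the contrapositive: show that on the event $\Xi_{\alpha\sigma_0}$ one has $\|G_m^{-1}-\hat G_m^{-1}\|_{op} \leq \alpha\|G_m^{-1}\|_{op}$. Fix $\omega \in \Xi_{\alpha\sigma_0}$. By Lemma~\ref{lemmexisigma0}, for every nonzero $t \in \mathbb{L}^2(\cercle)$,
\[
\bigl|\|t\|_{n,\sigma}^2 - \|t\|_{2,\sigma}^2\bigr| \leq \frac{\alpha}{2}\|t\|_{2,\sigma}^2.
\]
Restricting to $t \in S_m$ and writing $t = \sum_{\lambda \in \Lambda_m} x_\lambda \varphi_\lambda$, Lemma~\ref{ineq_de_normes} translates this into a quadratic form statement on $\mathbb{R}^{D_m}$: for every $x \in \mathbb{R}^{D_m}$,
\[
\bigl|{}^t x(\hat G_m - G_m)x\bigr| \leq \frac{\alpha}{2}\,{}^t x G_m x.
\]
In Loewner order this reads $(1-\alpha/2)G_m \preceq \hat G_m \preceq (1+\alpha/2)G_m$, and both matrices are positive definite by Lemma~\ref{inversible}.

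The next step is to invert this sandwich. Since the map $A \mapsto A^{-1}$ reverses the Loewner order on positive definite matrices,
\[
\frac{1}{1+\alpha/2}\,G_m^{-1} \preceq \hat G_m^{-1} \preceq \frac{1}{1-\alpha/2}\,G_m^{-1}.
\]
Subtracting $G_m^{-1}$ yields the two-sided bound
\[
-\frac{\alpha/2}{1+\alpha/2}\,G_m^{-1} \preceq \hat G_m^{-1} - G_m^{-1} \preceq \frac{\alpha/2}{1-\alpha/2}\,G_m^{-1},
\]
so that for every $x \in \mathbb{R}^{D_m}$,
\[
\bigl|{}^t x(G_m^{-1} - \hat G_m^{-1})x\bigr| \leq \frac{\alpha/2}{1-\alpha/2}\,{}^t x G_m^{-1} x \leq \frac{\alpha}{2-\alpha}\,\|G_m^{-1}\|_{op}\,{}^t xx.
\]
Because $G_m^{-1} - \hat G_m^{-1}$ is symmetric, its operator norm equals the supremum of $|{}^t x(G_m^{-1}-\hat G_m^{-1})x|/{}^t xx$, hence
\[
\|G_m^{-1} - \hat G_m^{-1}\|_{op} \leq \frac{\alpha}{2-\alpha}\,\|G_m^{-1}\|_{op} \leq \alpha\,\|G_m^{-1}\|_{op},
\]
where the last inequality uses $\alpha \leq 1$. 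Taking the contrapositive yields the claimed inclusion.

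There is no serious obstacle: the only subtle point is justifying the order-reversing property of matrix inversion on positive definite matrices (which follows from the spectral theorem applied to $G_m^{-1/2}\hat G_m G_m^{-1/2}$), and checking that $\alpha/(2-\alpha) \leq \alpha$ holds precisely under the hypothesis $\alpha \leq 1$, which is the reason this condition appears in the statement.
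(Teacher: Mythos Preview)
Your proof is correct and reaches the same conclusion through essentially the same mechanism as the paper: control the relative perturbation $\hat G_m$ versus $G_m$ via Lemma~\ref{lemmexisigma0}, then transfer this to the inverses. The difference is in presentation. The paper invokes Proposition~4 of Comte \& Genon-Catalot (2020) as a black box to obtain the inclusion $\{\|G_m^{-1}-\hat G_m^{-1}\|_{op} > \alpha\|G_m^{-1}\|_{op}\} \subset \{\|G_m^{-1/2}\hat G_m G_m^{-1/2} - I\|_{op} > \alpha/2\}$, then identifies this last operator norm with $\sup_{t\in S_m}|\|t\|_{n,\sigma}^2/\|t\|_{2,\sigma}^2 - 1|$. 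You instead work directly: the Loewner sandwich $(1-\alpha/2)G_m \preceq \hat G_m \preceq (1+\alpha/2)G_m$ is inverted by hand, which is precisely what that external proposition encodes. Your route is self-contained and arguably cleaner; the paper's route is shorter on the page because the perturbation-of-inverse step is outsourced. Both make the hypothesis $\alpha \leq 1$ do the same work (ensuring $\alpha/(2-\alpha)\leq \alpha$ in your version, and $\gamma \wedge 1 = \alpha$ in theirs).
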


\begin{proof}

Using Proposition  4 of \citelink{Comte-Genon-2020}{Comte \& Genon-Catalot (2020)}, because $G_m$ and $\hat{G}_m$ are a.s invertible, thanks to Lemma~\ref{inversible}, we know that for any $\gamma > 0$
\[\{\|G_m^{-1} - \hat{G}_m^{-1}\|_{op} > \gamma \|G_m^{-1}\|_{op} \} \subset \left\{ \| G_m^{-1/2}\hat{G}_m G_m^{-1/2} - I_m\|_{op} > \frac{\gamma \wedge 1}{2} \right\}.\]
We take $\gamma = \alpha$ thus $\gamma \wedge 1 = \alpha$. Then
 \[\left\{\| G_m^{-1/2}\hat{G}_m G_m^{-1/2} - I_m\|_{op} > \frac{\alpha}{2} \right\} = \left\{\sup_{t\in S_m, t\ne 0} \left| \frac{\|t\|^2_{n,\sigma}}{\|t\|^2_{2,\sigma}}  - 1\right| > \frac{\alpha}{2}\right\}. \]
Indeed 
\begin{align*}
\sup_{t\in S_m, t\ne 0} \left| \frac{\|t\|^2_{n,\sigma}}{\|t\|^2_{2,\sigma}}  - 1\right| &= \sup_{t\in S_m, t\ne 0} \left|  \frac{\|t\|^2_{n,\sigma}  - \|t\|^2_{2,\sigma}}{\|t\|_{2,\sigma}^2}\right|\\
&= \sup_{x\in\mathbb{R}^{D_m}, x\ne 0} \left|\frac{{}^t x (\hat{G}_m - G_m) x}{{}^t x G_m x}\right|
 \\&
 = \sup_{u\in\mathbb{R}^{D_m}, u\ne 0} \left|\frac{{}^t u G_m^{-1/2}(\hat{G}_m - G_m)G_m^{-1/2} u}{{}^t u G_m^{-1/2} G_m G_m^{-1/2} u}\right| \\
&= \sup_{u\in\mathbb{R}^{D_m}, u\ne 0} \left|\frac{{}^t u G_m^{-1/2}(\hat{G}_m - G_m)G_m^{-1/2} u}{{}^t uu}\right| \\
&= \| G_m^{-1/2}(\hat{G}_m - G_m) G_m^{-1/2}\|_{op}
 = \|G_m^{-1/2}\hat{G}_m G_m^{-1/2} - I_m\|_{op},
\end{align*}
where we took $x= G_m^{-1/2} u$. \\
Thanks to that equality we then have the inclusion
 \[\left\{\| G_m^{-1/2}\hat{G}_m G_m^{-1/2} - I_m\|_{op} > \frac{\alpha}{2} \right\} \subset \left\{\sup_{t\in \mathbb{L}^2(\cercle), t\ne 0} \left| \frac{\|t\|^2_{n, \sigma}}{\|t\|^2_{2, \sigma}}  - 1\right| > \frac{\alpha}{2}\right\}. \]
Using Lemma~\ref{lemmexisigma0} we have
\[  \left\{\sup_{t\in \mathbb{L}^2(\cercle), t\ne 0} \left| \frac{\|t\|^2_{n, \sigma}}{\|t\|^2_{2, \sigma}}  - 1\right| > \frac{\alpha}{2}\right\} \subset \Xi_{\alpha \sigma_0}^c ,\]
giving us the result we wanted with the definition of that random set.

\end{proof}

\subsection{Upper bound}

\subsubsection{Proof of Theorem~\ref{MISE du deuxieme estimateur}}\label{Preuve_MISE_Deuxieme_Estimateur}

\begin{proof}

Recall, for $m\in\mathcal{M}_n$ our estimator is $\tilde{f}_m =\mathring{f}_m \mathds{1}_{\Gamma_m}$ where $\Gamma_m=\{\|\mathring{f}_m\|_2^2 \leq k_n\}$ and $\mathring{f}_m =  \argmin{t\in S_m}\hspace{3pt}  \zeta(t) = \sum_{\indices} \hat{a}_{\lambda} \philambda$ where $(\hat{a}_\lambda)_{\indices}= \hat{G}_m^{-1}\hat{U}_m$ with this matrix and this vector defined in \eqref{eq:Def_G_chapeau} and \eqref{eq:Def_U_chapeau}.
Set $m\in \mathcal{M}_n$ and the following random set
%
%
\[\Xi_{\sigma_0}=\left\{\omega\in \Omega, \|\hat{\sigma} - \sigma\|_{\infty} \leq \frac{\sigma_0}{2} \right\}.\]
We recall thanks to Lemma~\ref{lemmexisigma0} that we have the inclusion
\[ \Xi_{\sigma_0} \subset \left\{ \sup_{t\in \mathbb{L}^2(\cercle) , t\ne 0} \left| \frac{\|t\|^2_{n, \sigma}}{\|t\|^2_{2, \sigma}} -1\right| \leq \frac{1}{2} \right\}.\]
With these random sets we split the risk into three terms
\[\Esp(\|f - \tilde{f}_m\|_2^2) = \underbrace{\Esp(\|f - \tilde{f}_m\|_2^2\mathds{1}_{\Xi_{\sigma_0}}\mathds{1}_{\Gamma_m})}_{=A_1} + \underbrace{\Esp(\|f - \tilde{f}_m\|_2^2\mathds{1}_{\Xi_{\sigma_0}}\mathds{1}_{\Gamma_m^c})}_{=A_2} + \underbrace{\Esp(\|f - \tilde{f}_m\|_2^2\mathds{1}_{\Xi_{\sigma_0}^c})}_{=A_3} .\]

For $A_3$ we know that $\|\tilde{f}_m\|_2^2 \leq k_n$ so we have thanks to Lemma~\ref{Lemme}, used with $y=\frac{\sigma_0}{2}$,
\[ A_3 \leq \Esp(2(\|f\|_2^2 + \|\tilde{f}_m\|_2^2)\mathds{1}_{\Xi_{\sigma_0}^c}) \leq 2(\|f\|_2^2 +k_n)\Proba(\Xi_{\sigma_0}^c) \leq 12(\|f\|_2^2 + k_n)e^{-2n \frac{\sigma_0^2}{36}} .\] 

For $A_2$ we use the fact that on $\Gamma_{m}^c, \tilde{f}_{m} \equiv 0 $. Using inequalities of Lemma~\ref{ineq_de_normes} and since on $\Xi_{\sigma_0}$ we know that if $\|t\|_2^2 > M$ then $\|t\|_{n, \sigma}^2 > \frac{\sigma_0 M}{2}$ for $t\in\mathbb{L}^2(\cercle)$ and $M>0$. Thus we have
\begin{align*}
A_2 &= \Esp(\|f - \tilde{f}_m\|_2^2\mathds{1}_{\Xi_{\sigma_0}}\mathds{1}_{\Gamma_m^c}) 
= \Esp(\|f\|_2^2\mathds{1}_{\Xi_{\sigma_0}}\mathds{1}_{\Gamma_m^c}) \\
&\leq \|f\|_2^2 \Proba(\Xi_{\sigma_0} \cap \Gamma_m^c)\\
&\leq \|f\|_2^2 \Proba\left(\Xi_{\sigma_0} \cap \left\{\|\mathring{f}_m\|_{n, \sigma}^2 > \frac{\sigma_0 k_n}{2}\right\}\right)\\
&\leq \|f\|_2^2 \frac{2}{\sigma_0 k_n}\Esp(\|\mathring{f}_m\|_{n, \sigma}^2 \mathds{1}_{\Xi_{\sigma_0}}),
\end{align*}
using Markov's inequality at the end.\\
Moreover thanks to the definition of $\mathring{f}_m$ and the inequalities of Section~\ref{normes} we have
\begin{align*}\|\mathring{f}_m\|_{n, \sigma}^2 &= {}^t \hat{U}_m\hat{G}_m^{-1}\hat{G}_m\hat{G}_m^{-1} \hat{U}_m ={}^t \hat{U}_m\hat{G}_m^{-1} \hat{U}_m \\
&\leq \|\hat{G}_m^{-1}\|_{op}\left(\sum_{\indices} |\hat{U}_m|^2_\lambda \right) \leq \frac{2}{\sigma_0} \Phi_0^2 D_m \leq \frac{n}{\pi\sigma_0},
\end{align*}
where we use Equation~\eqref{eq:Linear_Sieves2} to control $\|\hat {U}_m\|_2^2$.
Thus we have
\[A_2 \leq \|f\|_{\infty}^2 \frac{2n}{\pi\sigma_0^2 k_n} \Proba(\Xi_{\sigma_0}) \leq \|f\|_{\infty}^2 \frac{2n}{\pi\sigma_0^2 k_n}. \]
So if $k_n \geq n^2$ we have $A_2 \leq \frac{C}{n}$.\\
Lastly for $A_1$, on $\Xi_{\sigma_0}$, using the inequalities in Lemma~\ref{ineq_de_normes}  we have  
\begin{equation}\label{relation-A1}
\frac{\sigma_0}{2}\|\mathring{f}_m - f\|_2^2 \leq \|\mathring{f}_m - f\|_{n, \sigma}^2.
\end{equation}
Hence we need to find an upperbound for 
$ \|\mathring{f}_m - f\|_{n, \sigma}^2$. To this purpose, we have for two functions $t$ and $s$
\begin{align*}
\zeta(t) - \zeta(s) &= \frac{1}{n}\sum_{i=1}^n \left( \Inte (t^2(x) - s^2(x))\mathds{1}_{\{x\in[L_i, U_i]\}} \dd x - 2\Delta_i (t-s)(X_i') \right)\\
&=\frac{1}{n}\sum_{i=1}^n \bigg( \Inte (t - f)^2(x)\mathds{1}_{\{x\in[L_i,U_i]\}} \dd x -\Inte (s - f)^2(x)\mathds{1}_{\{x\in[L_i, U_i]\}} \dd x  \\
& \quad +2 \Inte (t(x) - s(x))f(x)\mathds{1}_{\{x\in[L_i, U_i]\}} \dd x - 2\Delta_i (t-s)(X_i') \bigg)\\
&= \|t-f\|_{n,\sigma}^2 - \|s-f\|_{n,\sigma}^2 \\
&\quad - \frac{2}{n}\sum_{i=1}^n \left( \Delta_i (t-s)(X_i') - \Inte (t(x)-s(x))f(x) \mathds{1}_{x\in [L_i, U_i]} \dd x \right).
\end{align*}
We define the empirical process $\tilde{\nu}(t) = \frac{1}{n}\sum_{i=1}^n \left( \Delta_i t(X_i') - \Inte t(x)f(x) \mathds{1}_{x\in [L_i, U_i]} \dd x \right)$
which also equals to $\tilde{\nu}(t) = \frac{1}{n}\sum_{i=1}^n \Delta_i t(X_i') - \Esp( \Delta_i t(X_i') | L_i, U_i)$.
Using that $\mathring{f}_m$ is a minimizer of $\zeta$ and $f_m$ is the projection of $f$ on the space of functions $S_m$, we have
\begin{align}
\nonumber 0 &\geq \zeta(\mathring{f}_m) - \zeta(f_m) = \|\mathring{f}_m - f\|_{n,\sigma}^2 - \|f_m - f\|_{n,\sigma}^2 - 2\tilde{\nu}(\mathring{f}_m - f_m)\\
\label{eq:Ineg4}&\Leftrightarrow \|\mathring{f}_m - f\|_{n,\sigma}^2 \leq \|f_m - f\|_{n,\sigma}^2 + 2\tilde{\nu}(\mathring{f}_m - f_m).
\end{align}
As $\tilde{\nu}$ is linear we obtain the following inequality
\begin{align}
 \nonumber2\tilde{\nu}(\mathring{f}_m - f_m) &= 2\|\mathring{f}_m - f_m\|_{n, \sigma}\tilde{\nu}\left( \frac{\mathring{f}_m - f_m}{\|\mathring{f}_m - f_m\|_{n, \sigma}} \right) \\
&\nonumber\leq 2 \|\mathring{f}_m - f_m\|_{n, \sigma} \left( \sup_{t\in B_m^{n, \sigma}(0, 1)} \tilde{\nu}(t) \right) \\
& \nonumber\leq \frac{1}{x}\|\mathring{f}_m - f_m\|_{n, \sigma}^2 + x\sup_{t\in B_m^{n, \sigma}(0, 1)} \tilde{\nu}^2(t) \\
&\label{eq:Ineg5} \leq \frac{1}{x}(1+y^{-1})\|\mathring{f}_m - f\|_{n, \sigma}^2 + \frac{1}{x}(1+y)\|f_m - f\|_{n,\sigma}^2 + x\sup_{t\in B_m^{n, \sigma}(0, 1)} \tilde{\nu}^2(t),
\end{align}
using Young and Cauchy-Schwarz inequalities and  $(x, y)\in(\mathbb{R}^*_+)^2$ and where $B^{n, \sigma}_m(0, 1)$ is the unit ball for the norm $\|\cdot\|_{n, \sigma}$ in $S_m$. Set $y=C_x = \frac{x+1}{x-1}$. So taking \eqref{eq:Ineg4} and injecting \eqref{eq:Ineg5} we obtain
\begin{align*}
\hspace{10pt}\|\mathring{f}_m - f\|_{n, \sigma}^2 &\leq \|f_m - f\|_{n, \sigma}^2 + 2\tilde{\nu}(\mathring{f}_m - f_m)\\
& \leq \|f_m - f\|_{n, \sigma}^2 + \frac{1}{x}(1+C_x^{-1})\|\mathring{f}_m - f\|_{n, \sigma}^2 \\
&\quad  + \frac{1}{x}(1+C_x)\|f_m - f\|_{n, \sigma}^2 + x\sup_{t\in B_m^{n, \sigma}(0, 1)} \tilde{\nu}^2(t) \\
&\hspace{-66pt}\Leftrightarrow \|\mathring{f}_m - f\|_{n, \sigma}^2 \leq C_x^2 \|f_m - f\|_{n, \sigma}^2 + x C_x \sup_{t\in B_m^{n, \sigma}(0, 1)} \tilde{\nu}^2(t).
\end{align*}
On $\Xi_{\sigma_0}$, thanks to the inequalities in Lemma~\ref{ineq_de_normes}, we know that we have the following relation $B_m^{n, \sigma}(0, 1) \subset B_m^{2, \sigma}\left(0, \sqrt{2}\right) \subset B_m^2 \left(0, \sqrt{\frac{2}{\sigma_0}} \right)$ where $B_m^x$ is a ball for the norm $\|\cdot\|_x$ in $S_m$. Thus on $\Xi_{\sigma_0}$ we have the inequality
\[ \|\mathring{f}_m - f\|_{n, \sigma}^2 \leq  C_x^2 \|f_m - f\|_{n, \sigma}^2 + x C_x \sup_{t\in B_m^2 \left(0, \sqrt{\frac{2}{\sigma_0}} \right)} \tilde{\nu}^2(t).\]
Furthermore we know that $\|f_m - f\|_{n, \sigma}^2 \leq \|f_m - f\|_2^2$ and equation \eqref{relation-A1} entails 
\[ \|\mathring{f}_m - f\|_2^2 \leq  \frac{2}{\sigma_0}C_x^2 \|f_m - f\|_2^2 + \frac{2}{\sigma_0}x C_x \sup_{t\in B_m^2 \left(0, \sqrt{\frac{2}{\sigma_0}} \right)} \tilde{\nu}^2(t) .\]
 Set $t\in B_m^2 \left(0, \sqrt{\frac{2}{\sigma_0}} \right)$ because we consider $t\in S_m$ we can write $t = \sum_{\indices} a_{\lambda}\philambda$.
\begin{align}
\nonumber
\tilde{\nu} (t) &= \frac{1}{n}\sum_{i=1}^n \sum_{\indices} \Delta_i a_{\lambda}\philambda(X_i') - \int_{[L_i, U_i]} a_{\lambda}\philambda(x) f(x) \dd x \\
\label{eq:nutilde}
&= \sum_{\indices} a_{\lambda}\tilde{\nu}(\philambda) 
\leq \sqrt{\sum_{\indices} a_{\lambda}^2} \sqrt{\sum_{\indices}\tilde{\nu}^2(\philambda)}.
\end{align}
This gives us
\begin{align*}\sup_{t\in B_m^2 \left(0, \sqrt{\frac{2}{\sigma_0}} \right)} \tilde{\nu}^2(t) &\leq  \frac{2}{\sigma_0}\sum_{\indices}\tilde{\nu}^2(\philambda) \\
&= \frac{2}{\sigma_0}\sum_{\indices}\left( \frac{1}{n}\sum_{i=1}^n \Delta_i\philambda(X_i') - \int_{[L_i, U_i]}\philambda(x) f(x) \dd x \right)^2.
\end{align*}
We define $T_{i, \lambda}  = \Delta_i \philambda(X_i')$ and $Z_{i, \lambda} = T_{i, \lambda} - \Esp( T_{i, \lambda} | L_i, U_i)$. It remains to find an upper bound for $\sum_{\indices} \Esp\left( \left[\frac{1}{n}\sum_{i=1}^n Z_{i, \lambda} \right]^2\right)$. Because of its definition we have $\Esp(Z_{i, \lambda})=0$.
Using the independency of the triplets $(X_i', L_i, U_i)$ we have for all $\indices$,

\begin{align*}
\Esp\left( \left[\frac{1}{n}\sum_{i=1}^n Z_{i, \lambda} \right]^2\right) 
&= \frac{\Esp(Z_{1, \lambda}^2)}{n}.
\end{align*}
Now we need to prove that $\Esp(Z_{1, \lambda}^2) \leq \Esp(T_{1, \lambda}^2)$. Indeed we
\begin{align}
\nonumber
\Esp(Z_{1, \lambda}^2 | L_1, U_1)) 
&= \Esp( (T_{1, \lambda} - \Esp( T_{1, \lambda} | L_1, U_1))^2 | L_1, U_1)) \\
\nonumber
&= \Esp(T_{1, \lambda}^2| L_1, U_1) - \Esp(T_{1, \lambda} | L_1, U_1)^2 \\
\label{eq:EspdeTetZ}
&\leq \Esp(T_{1, \lambda}^2| L_1, U_1).
\end{align}
Finally $\Esp(Z_{1, \lambda}^2) = \Esp(\Esp(Z_{1, \lambda}^2 | L_1, U_1))) \leq \Esp(\Esp(T_{1, \lambda}^2| L_1, U_1)) = \Esp(T_{1, \lambda}^2)$.
This gives us
\begin{align*}
\sum_{\indices} \Esp\left( \left[\frac{1}{n}\sum_{i=1}^n Z_{i, \lambda} \right]^2\right) &\leq \sum_{\indices} \frac{1}{n}\Esp(\Delta^2 \philambda^2(X'))\\
 &\leq\frac{1}{n}\Esp( (\sum_{\indices}\philambda^2(X')) \Delta) \\
 &\leq \frac{1}{n} \Esp(\|\sum_{\indices}\philambda^2 \|_{\infty}\Delta)\\
 &\leq \frac{\Phi_0^2 D_m \Esp(\Delta)}{n}.
\end{align*}
So we finally have the following inequality
\[A_1 \leq \frac{2C_x^2}{\sigma_0}\|f_m - f\|_2^2 + \frac{2xC_x D_m \Esp(\Delta)}{\pi\sigma_0^2 n}.\]
So we have in the end 
 \begin{align*}
\Esp(\|\tilde{f}_m - f\|_2^2) &\leq \frac{2C_x^2}{\sigma_0}\|f_m - f\|_2^2 + \frac{2xC_x D_m \Esp(\Delta_1)}{\pi \sigma_0^2 n} \\
&\quad+ \|f\|_{\infty}^2 \frac{2 n}{\pi\sigma_0^2 k_n} + 12(\|f\|_{\infty}^2 + k_n)e^{-2n \frac{\sigma_0^2}{36}}\\
&\leq C_1 (\sigma_0) \|f_m - f\|_2^2 + C_2 (\sigma_0) \frac{D_m}{n} + \frac{C_3 (\sigma_0, \|f\|_{\infty}) }{n}.
\end{align*}
Suppose now that $f$ is an element of $W(\beta,R)$. So if we write $f=\sum_{j=0}^{\infty} a_j\varphi_j$, using the definition of the Sobolev class reminded in Section~\ref{SectionBorneInf}, we have
\[ \|f_m - f\|_2^2 = \sum_{D_m +1}^{\infty} a_j^2 \leq \frac{1}{\alpha^2_{D_m +1}}\sum_{j=0}^{\infty} \alpha_j^2 a_j^2 \leq \frac{R^2}{\pi^{2\beta}\alpha^2_{D_m +1}} \leq \frac{R^2}{\pi^{2\beta}}D_m^{-2\beta}.\]
So  if we take $D_m = \lfloor n^{\frac{1}{2\beta +1}} \rfloor$ we have
\[ \|f_m - f\|_2^2 \leq \frac{R^2}{\pi^{2\beta}}\left( n^{\frac{1}{2\beta +1}}\right)^{-2\beta} \leq \frac{R^2}{\pi^{2\beta}} n^{\frac{-2\beta}{2\beta+1}},\]
and
\[ \frac{D_m}{n} \leq n^{\frac{1}{2\beta +1} - 1} = n^{\frac{-2\beta}{2\beta+1}}.\]
Thus we obtain 
\[ \Esp(\|\tilde{f}_m - f\|_2^2) = \mathcal{O}\left(n^{\frac{-2\beta}{2\beta+1}}\right),\]
which completes the proof of Theorem \ref{MISE du deuxieme estimateur}.

\end{proof}

\subsubsection{Proof of Proposition~\ref{Adaptation du deuxieme estimateur}}\label{Preuve_Adaptation_Deuxieme_Estimateur}

\begin{proof}
We recall the definition of our estimator $\tilde{f}_{\hat{m}} = \mathring{f}_{\hat{m}}\mathds{1}_{\Gamma_{\hat{m}}}$ where $\mathring{f}_{\hat{m}} = \argmin{t\in S_{\hat{m}}}\hspace{3pt} \zeta(t)$ and  $$\Gamma_{\hat{m}} = \{\|\mathring{f}_{\hat{m}}\|_2^2 \leq k_n\}.$$
We recall the definition of the set, defined in Section~\ref{Preuve_MISE_Deuxieme_Estimateur},
\[\Xi_{\sigma_0}=\left\{\omega\in \Omega, \|\hat{\sigma} - \sigma\|_{\infty} \leq \frac{\sigma_0}{2} \right\}.\]
We recall the result of Lemma~\ref{lemmexisigma0} 
\[ \Xi_{\sigma_0} \subset \left\{ \sup_{t\in \mathbb{L}^2(\cercle) , t\ne 0} \left| \frac{\|t\|^2_{n, \sigma}}{\|t\|^2_{2, \sigma}} -1\right| \leq \frac{1}{2} \right\}.\]
Thus we can write
\begin{equation}\label{eq:Separationenensemble}
\Esp(\|\tilde{f}_{\hat{m}} - f\|^2_2)=\underbrace{\Esp(\|\tilde{f}_{\hat{m}} - f\|^2_2\mathds{1}_{\Xi_{\sigma_0}\cap \Gamma_{\hat{m}}})}_{=A_1} + \underbrace{\Esp(\|\tilde{f}_{\hat{m}} - f\|^2_2\mathds{1}_{\Xi_{\sigma_0}\cap \Gamma_{\hat{m}}^c})}_{=A_2} + \underbrace{\Esp(\|\tilde{f}_{\hat{m}} - f\|^2_2\mathds{1}_{\Xi_{\sigma_0}^c})}_{=A_3} .
\end{equation}

We deal with $A_1$ first. We have on $\Xi_{\sigma_0}$ $$\frac{\sigma_0}{2}\|\mathring{f}_{\hat{m}} - f\|_2^2 \leq \|\mathring{f}_{\hat{m}} - f\|_{n,\sigma}^2,$$
thus we need to upperbound $ \|\mathring{f}_{\hat{m}} - f\|_{n,\sigma}^2$.
By definition of $\hat{m}$ and $\mathring{f}_m$ we know that for all $m\in \mathcal{M}_n$,
\[ \zeta(\mathring{f}_{\hat{m}}) + \text{pen}(\hat{m}) \leq \zeta(\mathring{f}_m) + \text{pen}(m) \leq \zeta(f_m) + \text{pen}(m).\]
We now have an analogous inequality as \eqref{eq:Ineg4}, 
\begin{align*}
\|\mathring{f}_{\hat{m}} - f\|_{n,\sigma}^2 &\leq \|f_m - f\|_{n,\sigma}^2 + 2\tilde{\nu}(\mathring{f}_{\hat{m}} - f_m) + \text{pen}(m) - \text{pen}(\hat{m})\\
&\leq \|f_m - f\|_{n,\sigma}^2 + x^{-1}\|\mathring{f}_{\hat{m}} - f_m\|^2_{2,\sigma} \\
&\quad + x \left(\sup_{t\in B_{m, \hat{m}}^{2, \sigma}(0, 1)} \tilde{\nu}^2 (t) \right) + \text{pen}(m) - \text{pen}(\hat{m}) \\
&\leq (1+2x^{-1})\|f_m - f\|_2^2 + 2x^{-1}\|\mathring{f}_{\hat{m}} - f\|^2_{2,\sigma} \\
& \quad + x\left(\sup_{t\in B_{m, \hat{m}}^{2, \sigma}(0, 1)} \tilde{\nu}^2 (t) - \text{p}(\hat{m}, m) \right) + x\text{p}(\hat{m},m) +\text{pen}(m) - \text{pen}(\hat{m}) \\
& \leq (1+2x^{-1})\|f_m - f\|_2^2 + 2x^{-1}\|\mathring{f}_{\hat{m}} - f\|_{2,\sigma}^2 + xR_m + 2\text{pen}(m),
\end{align*}
where $B_{m, m'}^{2, \sigma}(0, 1):= \{ t\in S_m + S_{m'}, \|t\|_{2, \sigma}=1 \}$ and we use a function $\text{p}(\cdot, \cdot)$ such that 
\begin{equation}\label{eq:petpen}
x\text{p}(m', m) \leq \text{pen}(m) + \text{pen}(m'),
\end{equation}
for all $m', m\in \mathcal{M}_n$ and $x>0$ and where 

\begin{equation}\label{eq:Om}
R_m = \sum_{m'\in\mathcal{M}_n} \left(\sup_{t\in B_{m, m'}^{2, \sigma}(0, 1)} \tilde{\nu}^2 (t) - \text{p}(m', m) \right).
\end{equation}
Then we use the fact that on $\Xi_{\sigma_0}$: 
$\|\mathring{f}_{\hat{m}} - f\|^2_{2,\sigma} \leq 2\|\mathring{f}_{\hat{m}}- f\|_{n, \sigma}^2$, thus we have
\begin{equation}\label{Om}
\frac{x-4}{x}\|\mathring{f}_{\hat{m}} -f\|^2_{n,\sigma}\mathds{1}_{\Xi_{\sigma_0}} \leq \frac{x+2}{x}\|f_m - f\|_2^2 + xR_m + 2\text{pen}(m).
\end{equation}
This last inequality forces us to choose $x>4$. We need to find an upper bound of $\Esp(R_m)$ which will be obtained using a Talagrand inequality.\\
We recall that $\tilde{\nu}(t) = \frac{1}{n}\sum_{i=1}^n \Delta_i t(X_i') - \Esp(\Delta_i t(X_i') | L_i, U_i) $. So for every function $t$, all of the terms of the sum have an expected value equal to zero. Thus we can apply this slight modification of Talagrand's lemma stated below.

\begin{lemma}\label{Talagrand_Conditionnel}
Let $(Z_1, Y_1), \dots, (Z_n, Y_n)$ be independent random variables  and 
\[ \nu(l) = \frac{1}{n}\sum_{i=1}^n l(Z_i) - \Esp(l(Z_i) | Y_i),\] 
for $l$ belonging to a countable class $\mathcal{L}$ of measurable functions. Then for $\epsilon > 0$,

\[ \Proba\left(\sup_{l\in\mathcal{L}} |\nu(l)| \geq H + \xi\right) \leq \exp\left(-\frac{n\xi^2}{2(v+4HM_1) + 6M_1\xi} \right),\]

for \hspace{0.4cm}$\underset{l\in\mathcal{L}}{\sup} \|l\|_{\infty} \leq M_1 , \hspace{0.4cm}\Esp\left(\underset{l\in\mathcal{L}}{\sup} |\nu(l)|\right) \leq H , \hspace{0.4cm}\underset{l\in\mathcal{L}}{\sup}\frac{1}{n} \sum_{i=1}^n\Var(l(Z_i)) \leq v $.\\
Moreover, for $\epsilon >0$, we have
\[ \Esp\left( \sup_{l\in\mathcal{L}} |\nu(l)|^2 - 2(1+2\epsilon)H^2\right)_+ \leq C\left(\frac{v}{n}e^{-K_1 \epsilon \frac{nH^2}{v}} + \frac{M_1^2}{n^2 C(\epsilon)^2}e^{-K_2C(\epsilon)\sqrt{\epsilon}\frac{nH}{M_1}} \right),\]
where $C(\epsilon) = (\sqrt{1+\epsilon}-1)\wedge 1$, $K_1= 1/6$ et $K_2 = 1/(21\sqrt{2})$.
\end{lemma}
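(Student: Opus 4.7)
The plan is to reduce Lemma~\ref{Talagrand_Conditionnel} to a classical (unconditional) Talagrand-type concentration inequality for centered i.i.d.\ empirical processes by a symmetric re-centering of the function class. For each $l\in\mathcal L$, introduce
\[\tilde l(z,y) := l(z) - \Esp\bigl(l(Z)\mid Y = y\bigr),\]
so that $\nu(l) = \frac{1}{n}\sum_{i=1}^n \tilde l(Z_i,Y_i)$. Since the pairs $(Z_i,Y_i)$ are assumed independent, the summands are independent, and the tower property yields $\Esp\bigl[\tilde l(Z_i,Y_i)\bigr]=0$. Hence $\nu$ falls exactly within the standard framework of a centered i.i.d.\ empirical process indexed by the countable class $\tilde{\mathcal L} := \{\tilde l : l\in\mathcal L\}$, to which Bousquet's and Klein--Rio's inequalities apply directly.

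The next step is to translate the three ingredients of the classical bound — ambient sup-norm, expected supremum, and weak variance — into the quantities $M_1, H, v$ appearing in the lemma. By the triangle inequality, $\|\tilde l\|_\infty \leq 2\|l\|_\infty \leq 2M_1$, so the uniform bound for $\tilde{\mathcal L}$ is $2M_1$. The law of total variance gives
\[\Var\bigl(\tilde l(Z_i,Y_i)\bigr) = \Esp\bigl[\Var(l(Z_i)\mid Y_i)\bigr] \leq \Var(l(Z_i)),\]
so $\sup_{l}\frac{1}{n}\sum_i \Var(\tilde l(Z_i,Y_i)) \leq v$. Finally $\Esp(\sup_l|\nu(l)|)\leq H$ by definition, as the process $\nu$ is exactly the same on both sides.

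With these three ingredients in hand, I would invoke the Bousquet form of Talagrand's inequality for the first (tail probability) statement of the lemma, and the Klein--Rio moment refinement of \citelink{Klein-Rio}{Klein \& Rio} to derive the second (truncated moment) statement. The main obstacle is purely bookkeeping: the factor $2$ introduced in the sup-norm bound $\|\tilde l\|_\infty\leq 2M_1$ must be propagated through the numerical constants of the classical statement, and the coefficients appearing in the lemma (namely $2(v+4HM_1)+6M_1\xi$ in the denominator of the Bennett exponent, and $K_1 = 1/6$, $K_2 = 1/(21\sqrt 2)$ in the moment bound) are precisely those one obtains after this substitution from the textbook version based on $\|l\|_\infty\leq b$. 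A minor technical checkpoint along the way is to verify that the map $l\mapsto \tilde l$ preserves countability of the class so that the suprema stay measurable, which is automatic since $l$ ranges over a countable set.
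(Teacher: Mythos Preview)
Your approach is correct and coincides with the paper's: the paper simply remarks that the proof is analogous to the classical Talagrand lemma, applying the Klein--Rio concentration inequality to the centered differences $s^i(x) = l(x) - \Esp(l(Z_i)\mid Y_i)$, which is precisely your $\tilde l$ reformulation. One tiny slip worth flagging: the lemma assumes only independence of the pairs $(Z_i,Y_i)$, not that they are i.i.d., but since Klein--Rio's inequality applies to independent (not necessarily identically distributed) summands, this does not affect your argument.
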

Note that the proof of this lemma is analogous to a famous lemma from \citelink{Talagrand}{Talagrand (`96)}, using a concentration inequality in \citelink{Klein-Rio}{Klein \& Rio (2005)} (applied  with their notations to the difference $s^i(x) = l(x) - \Esp(l(Z_i) | Y_i)$).

Now, let us see how to apply this lemma to our empirical process $\tilde \nu$. Thanks to usual density arguments, instead of a countable class, we are allowed to take a unit ball in a finite functional space. Thus we consider the class of measurable functions $ \mathcal{L} =  \{l_t, t \in B^{2,\sigma}_{m, m'}(0, 1)\}$, its elements defined by  $l(z) = l_t(x,\delta) = \delta t(x)$ and  $Z_i = (\Delta_i, X_i')$ and $Y_i = (L_i, U_i)$. With those definitions $\tilde{\nu}$ is  properly defined as in the lemma's statement. Consequently we need to compute the three upper bounds $M_1, H$ and $v$. Since the linear sieves are nested the dimension of the space $S_m + S_{m'}$ verifies $D_{m \vee m'} \leq D_m + D_{m'}$.

\begin{align*}
\sup_{l\in\mathcal{L}} \|l\|_{\infty} &= \sup_{t\in B_{m, m'}^{2, \sigma}(0, 1)} \|l_t \|_{\infty}
\leq \sup_{t\in B_{m, m'}^{2, \sigma}(0, 1)} \|t \|_{\infty}\\
&\leq \sup_{t\in B_{m, m'}^{2, \sigma}(0, 1)} \Phi_0 \sqrt{D_{m \vee m'}} \|t\|_2 \\
& \leq  \frac{\Phi_0}{\sigma_0}  \sqrt{D_{m'}+D_m} =: M_1 .  \\
\end{align*}
Moreover
\begin{align*}
\sup_{l\in\mathcal{L}} \Var(l(Z_1)) &=  \sup_{t\in B_{m, m'}^{2, \sigma}(0, 1)}\Var(\Delta t(X))\\
&\leq \sup_{t\in B_{m, m'}^{2, \sigma}(0, 1)} \Esp( \Delta t^2(X)) \\
&= \sup_{t\in B_{m, m'}^{2, \sigma}(0, 1)} \Inte \Inte \Inte t^2(x) f(x) \mathds{1}_{x\in[l, u]} \Proba_{(L ,U)}(\dd l , \dd u) \dd x\\
&= \sup_{t\in B_{m, m'}^{2, \sigma}(0, 1)} \Inte t^2(x) f(x) \sigma(x) \dd x  \\
&\leq \|f\|_{\infty} \sup_{t\in B_{m, m'}^{2, \sigma}(0, 1)} \|t\|_{2, \sigma}^2 \leq \|f\|_{\infty} =: v .
\end{align*}
We use here the result of Lemma~\ref{changement de base} on the set $S_m + S_{m'}$. Indeed, thanks to the nested hypothesis on the linear sieves, we know that $S_m + S_{m'} = S_{m \vee m'}$.\\
Thanks to \eqref{eq:nutilde} we know that if $t=\sum_{\lambda \in \Lambda_{m \vee m'}} \tilde{a}_\lambda \tilde{\varphi}_\lambda$, with $(\tilde{\varphi}_\lambda)_{\Lambda_{m \vee m'}}$ an orthonormal basis of $S_{m\vee m'}$ for the norm $\|\cdot\|_{2,\sigma}$,
\[ \tilde{\nu}(l_t) = \sum_{\lambda \in \Lambda_{m \vee m'}} \tilde{a}_\lambda \tilde{\nu}(\tilde{\varphi}_\lambda) \leq \sqrt{\sum_{\lambda \in \Lambda_{m \vee m'}}\tilde{a}^2_\lambda} \sqrt{\sum_{\Lambda_{m \vee m'}}\tilde{\nu}^2(\tilde{\varphi}_\lambda)}.\]
Thus we have 
\[ \sup_{t\in B_{m, m'}^{2, \sigma}(0, 1)} \tilde{\nu}^2(l_t) \leq \sup_{\sum_{\lambda \in \Lambda_{m \vee m'}} a^2_\lambda = 1} \left(\sum_{\lambda \in \Lambda_{m \vee m'}} a^2_\lambda \right)\left(\sum_{\lambda \in \Lambda_{m \vee m'}}\tilde{\nu}^2(\tilde{\varphi}_\lambda)\right) = \sum_{\lambda \in \Lambda_{m \vee m'}}\tilde{\nu}^2(\tilde{\varphi}_\lambda).\]
We define for all $i\in\{ 1,\dots , n\}$ and $\lambda \in \Lambda_{m \vee m'}$, $\tilde{T}_{i, \lambda} = \Delta_i \tilde{\varphi}_\lambda(X_i)$ and $\tilde{Z}_{i, \lambda} = \tilde{T}_{i, \lambda} - \Esp(\tilde{T}_{i, \lambda}|L_i, U_i)$.
We notice $\sum_{\lambda \in \Lambda_{m \vee m'}}\tilde{\nu}^2(\tilde{\varphi}_\lambda) = \left(\frac{1}{n}\sum_{i=1}^n \tilde{Z}_{i, \lambda} \right)^2$ .\\
Using computations that lead to \eqref{eq:EspdeTetZ} we have $\Esp(\tilde{Z}^2_{i, \lambda}) \leq \Esp(\tilde{T}^2_{i, \lambda})$. Using this and Lemma~\ref{changement de base} we have
\begin{align*}
\Esp(\sup_{l\in\mathcal{L}} \tilde{\nu}^2(l)) &= \Esp\left(\sup_{t\in B_{m, m'}^{2,\sigma}(0, 1)} \tilde{\nu}^2(l_t)\right) \leq \sum_{\lambda \in \Lambda_{m \vee m'}} \Esp( \tilde{\nu}^2(\tilde{\varphi}_\lambda))\\
& \leq \sum_{\lambda \in \Lambda_{m \vee m'}}\Esp\left( \left[\frac{1}{n}\sum_{i=1}^n \tilde{Z}_{i, \lambda} \right]^2 \right) \\
& \leq \sum_{\lambda \in \Lambda_{m \vee m'}} \frac{1}{n}\Esp( \Delta^2 \tilde{\varphi}_\lambda^2(X) ) \leq \frac{1}{n} \sum_{\lambda \in \Lambda_{m \vee m'}} \Inte  \tilde{\varphi}_\lambda^2(x) f(x)\sigma(x) \dd x \\
&\leq \frac{1}{n} \Inte \left\|  \sum_{\lambda \in \Lambda_{m \vee m'}}  \tilde{\varphi}_\lambda^2 \right\|_{\infty} f(x)\sigma(x) \dd x \\
&\leq  \frac{1}{n} \Inte \|G_{m \vee m'}^{-1}\|_{op}\left\|  \sum_{\lambda \in \Lambda_{m \vee m'}}  \philambda^2 \right\|_{\infty} f(x)\sigma(x) \dd x \\
&\leq \frac{\Phi_0^2}{n} D_{m \vee m'}\|G_{m \vee m'}^{-1}\|_{op}\Inte f(x)\sigma(x) \dd x \\
&\leq \frac{\Phi_0^2}{n}(D_m\|G_m^{-1}\|_{op} + D_{m'}\|G_{m'}^{-1}\|_{op})\Inte f(x)\sigma(x) \dd x =: H^2 .
\end{align*}
  Having obtained $M_1, v$ and $H$, we are ready to get an upperbound for $\Esp (R_m)$ by applying Lemma~\ref{Talagrand_Conditionnel}. To this end, in the expression of $R_m$ given in \eqref{eq:Om}, we set for the function $\text{p}:(m, m') \mapsto 2(1+2\epsilon)\Phi_0^2 \frac{D_m\|G_m^{-1}\|_{op} + D_{m'}\|G_{m'}^{-1}\|_{op}}{n} \Inte f(x)\sigma(x) \dd x$. Choosing $\epsilon=\frac{1}{2}$ and since we need $x > 4$ thus to verify \eqref{eq:petpen} we choose the penalty term as \\$\text{pen}(m) = \kappa \Phi_0^2\|G_m^{-1}\|_{op} \frac{D_m}{n}\Inte f(x)\sigma(x)\dd x  $ for $\kappa:= 4x > 16$. Using Lemma~\ref{Talagrand_Conditionnel} we have that

\begin{align*}
\Esp(R_m) &\leq \sum_{m' \in \mathcal{M}'_n} \Esp\left( \sup_{t\in B} \tilde{\nu}(l_t)^2 - \text{p}(m, m')\right)\\
&\leq \sum_{m'\in\mathcal{M}'_n}\frac{\kappa_1}{n} \left( e^{-\kappa_2 (D_m\|G_m^{-1}\|_{op} + D_{m'}\|G_{m'}^{-1}\|_{op})} + \frac{e^{-\kappa_3 \epsilon^{3/2}\sqrt{n}\frac{\sqrt{D_m\|G_m^{-1}\|_{op} + D_{m'}\|G_{m'}^{-1}\|_{op}}}{\sqrt{D_m + D_{m'}}}}}{nC(\epsilon)^2} \right) \\
&\leq \frac{\kappa_1}{n}  \left(\sum_{m'\in\mathcal{M}'_n} e^{-\kappa_2 (D_m\|G_m^{-1}\|_{op} + D_{m'}\|G_{m'}^{-1}\|_{op})} + \frac{e^{-\kappa_3 \epsilon^{3/2}\sqrt{n}}}{nC(\epsilon)^2} \right) \\
&\leq \left(\frac{\kappa_1 e^{-\kappa_2 D_m\|G_m^{-1}\|_{op}}}{n} \underbrace{\sum_{k=0}^n  e^{-\kappa_2 k}}_{< + \infty}\right) + \frac{\kappa_1 e^{-\kappa_3 \epsilon^{3/2}\sqrt{n}}}{nC(\epsilon)^2} \\
&\leq \frac{C_1}{n},
\end{align*}
where we use that $\|G_m^{-1}\|_{op} \geq 1$ for all $m\in\mathcal{M}_n$, using \eqref{eq:lien norme 2sigma et norme 2}
\[\|G_m^{-1}\|_{op} = \sup_{t\in S_m} \frac{\|t\|_2^2}{\|t\|_{2, \sigma}^2} \geq \sup_{t\in S_m} \frac{\|t\|_2^2}{\|t\|_2^2} \geq 1 . \]
\\
Finally, recalling that on $\Xi_{\sigma_0}$, $\frac{\sigma_0}{2}\|\mathring{f}_{\hat{m}} - f\|_2^2 \leq \|\mathring{f}_{\hat{m}} - f\|_{n,\sigma}^2$ and inequality \eqref{Om},  we  have for all $m\in\mathcal{M}_n,$
\[ A_1 \leq \frac{2(x+2)}{\sigma_0(x-4)}\|f_m - f\|_2^2 + \frac{4x}{\sigma_0(x-4)}\text{pen}(m) + \frac{2x^2 C_1}{\sigma_0(x-4)}\frac{1}{n}. \]

For $A_2$ we use the fact that on $\Gamma_{m}^c, \tilde{f}_{m} \equiv 0 $. Using inequalities of Lemma~\ref{ineq_de_normes} and since we are in $\Xi_{\sigma_0}$ we know that if $\|t\|_2^2 > M$ then $\|t\|_{n, \sigma}^2 > \frac{\sigma_0 M}{2}$ for $t\in\mathbb{L}^2(\cercle)$ and $M>0$. Thus we have
\begin{align*}
A_2 &= \Esp(\|\tilde{f}_{\hat{m}} - f\|_2^2\mathds{1}_{\Xi_{\sigma_0}\cap \Gamma_{\hat{m}}^c})\\
&\leq \|f\|_2^2 \Proba(\Xi_{\sigma_0}\cap \Gamma_{\hat{m}}^c)\\
&\leq \|f\|_2^2 \Proba\left(\Xi_{\sigma_0} \cap \left\{\|\mathring{f}_{\hat{m}}\|_{n,\sigma}^2 > \frac{\sigma_0 k_n}{2}\right\}\right)\\
&\leq \|f\|_2^2 \frac{2}{\sigma_0 k_n}\Esp(\|\mathring{f}_{\hat{m}}\|_{n,\sigma}^2 \mathds{1}_{\Xi_{\sigma_0}}).
\end{align*}
Moreover thanks to the definition of $\mathring{f}_m$ and \eqref{eq:majoration normeop hatG} we have, for all $m\in\mathcal{M}_n$,
\[\|\mathring{f}_m\|_{n, \sigma}^2 = {}^t \hat{U}_m\hat{G}_m^{-1}\hat{G}_m\hat{G}_m^{-1} \hat{U} ={}^t \hat{U}_m\hat{G}_m^{-1} \hat{U}_m \leq \|\hat{G}_m^{-1}\|_{op}\|U_m\|_2^2 \leq \frac{1}{\pi\sigma_0}n .\]
Thus we have
\[A_2 \leq \|f\|_{\infty}^2 \frac{4\Phi_0^2 n}{\sigma_0^2 k_n} \Proba(\Xi_{\sigma_0}) \leq \|f\|_{\infty}^2 \frac{2n}{\pi\sigma_0^2 k_n} .\]
So if we take $k_n \geq n^2$ we have
\[A_2 \leq \|f\|_\infty^2 \frac{2}{\pi\sigma_0^2 n} .\]

For $A_3$ we know that $\|\tilde{f}_{\hat{m}}\|_2^2 \leq k_n$ so we have thanks to Lemma~\ref{Lemme} used with $y=\frac{\sigma_0}{2}$,
\[ A_3 \leq \Esp(2(\|f\|_2^2 + \|\tilde{f}_{\hat{m}}\|_2^2)\mathds{1}_{\Xi_{\sigma_0}^c}) \leq 2(\|f\|_2^2 +k_n)\Proba(\Xi_{\sigma_0}^c) \leq 12(\|f\|_2^2 + k_n)e^{-2n \frac{\sigma_0^2}{36}} \leq \frac{C_2}{n},\]
where $C_2$ depends on $\sigma_0$ and $\|f\|_{\infty}$. 
In the end we have for all $m\in\mathcal{M}_n$
\begin{align*}
\Esp(\|\tilde{f}_{\hat{m}} - f\|^2_2) &\leq \frac{2(x+2)}{\sigma_0(x-4)}\|f_m - f\|_2^2 + \frac{4x}{\sigma_0(x-4)}\text{pen}(m)  \\
&\quad +   \left(\frac{2x^2 C_1}{\sigma_0(x-4)} + \frac{2\|f\|_{\infty}^2}{\pi\sigma_0^2} + C_2\right)\frac{1}{n}\\
&\leq C(\sigma_0) \left(\|f_m - f\|_2^2 + \text{pen}(m) \right) + \frac{\tilde{C}(\sigma_0, \|f\|_{\infty})}{n},
\end{align*}
and thus we have
\[ \Esp(\|\tilde{f}_{\hat{m}} - f\|^2_2) \leq C(\sigma_0) \inf_{m\in\mathcal{M}_n}(\|f-f_m\|_2^2 + \text{pen}(m)) + \frac{\tilde{C}(\sigma_0, \|f\|_\infty)}{n},\]
which completes the proof of Proposition \ref{Adaptation du deuxieme estimateur}.
\end{proof}

\subsubsection{Proof of Theorem~\ref{Adaptation sans inconnues du deuxieme estimateur}}\label{Preuve_Adaptation_Sans_Inconnues_Deuxieme_Estimateur}

\begin{proof}

We use the same partition of $\Omega$ than in \eqref{eq:Separationenensemble}, so we have
\begin{equation}\label{eq:miseamajorer}
\Esp(\|\tilde{f}_{\hat{m}^*} - f\|^2_2)=\underbrace{\Esp(\|\tilde{f}_{\hat{m}^*} - f\|^2_2\mathds{1}_{\Xi_{\sigma_0}\cap \Gamma_{\hat{m}^*}})}_{=A_1} + \underbrace{\Esp(\|\tilde{f}_{\hat{m}^*} - f\|^2_2\mathds{1}_{\Xi_{\sigma_0}\cap \Gamma_{\hat{m}^*}^c})}_{=A_2} + \underbrace{\Esp(\|\tilde{f}_{\hat{m}^*} - f\|^2_2\mathds{1}_{\Xi_{\sigma_0}^c})}_{=A_3} .
\end{equation}
We consider the following set
\[T = \{\omega\in\Omega, \forall m  \in\mathcal{M}_n, \|G_m^{-1}\|_{op} \leq 2\|\hat{G}_m^{-1}\|_{op}\},\]
and introduce it in the term $A_1$ of \eqref{eq:miseamajorer}. This splits $A_1$ into two terms giving
\begin{align*}
\Esp(\|\tilde{f}_{\hat{m}^*} - f\|^2_2) &=\underbrace{\Esp(\|\tilde{f}_{\hat{m}^*} - f\|_2^2\mathds{1}_{\Xi_{\sigma_0}\cap \Gamma_{\hat{m}^*}\cap T})}_{=A_{11}} +  \underbrace{\Esp(\|\tilde{f}_{\hat{m}^*} - f\|_2^2\mathds{1}_{\Xi_{\sigma_0}\cap \Gamma_{\hat{m}^*}\cap T^c })}_{=A_{12}} \\
 &+ \underbrace{\Esp(\|\tilde{f}_{\hat{m}^*} - f\|^2_2\mathds{1}_{\Xi_{\sigma_0}\cap \Gamma_{\hat{m}^*}^c})}_{=A_2} + \underbrace{\Esp(\|\tilde{f}_{\hat{m}^*} - f\|^2_2\mathds{1}_{\Xi_{\sigma_0}^c})}_{=A_3} .
\end{align*}
Since $A_2$ and $A_3$ are the same quantities as in Section~\ref{Preuve_Adaptation_Deuxieme_Estimateur}, using the same arguments we have
\[ A_2 \leq \|f\|_{\infty}^2 \frac{2}{\pi\sigma_0^2 n} \hspace{15pt} \text{ and } \hspace{15pt} A_3 \leq 12(\|f\|_{\infty}^2 + n^2)e^{-2n \frac{\sigma_0^2}{36}}\leq \frac{C_2}{n}.\]
So it remains to control the terms $A_{11}$ and $A_{12}$.\\
For $A_{11}$ we use that on the set $T$, $\|G_m^{-1}\|_{op} \leq 2\|\hat{G}_m^{-1}\|_{op}$ and on $\Xi_{\sigma_0}, \|\hat{G}_m^{-1}\|_{op} \leq 2\|G_m^{-1}\|_{op}$ for all $m\in \mathcal{M}_n$ as stated in \eqref{eq:majoration normeop hatG}.
Thus we have $\Esp(\|\hat{G}_m^{-1}\|_{op}\mathds{1}_{\Xi_{\sigma_0}}) \leq 2\|G_m^{-1}\|_{op}$.
Using the same proof as in Section~\ref{Preuve_Adaptation_Deuxieme_Estimateur}, taking $\epsilon=1/2$ and  $x>4$, writing $\kappa := 8x > 32$, we set the penalization term to $\widehat{\text{pen}}(m) = \kappa\frac{\|\hat{G}_m^{-1}\|_{op}}{2\pi}\frac{D_m}{n}$. We obtain for any $m\in\mathcal{M}_n$,
\[ A_{11} \leq \frac{2(x+2)}{\sigma_0(x-4)}\|f_m - f\|_2^2 + \frac{8 x}{\pi\sigma_0^2(x-4x)}\frac{\kappa D_m}{n}  + \frac{2x^2}{\sigma_0(x-4)}\frac{C_1}{n}.  \]

For $A_{12}$, we have
\[ A_{12} \leq 2(n^2 + \|f\|_{\infty}^2) \Proba(T^c),\]
with the set $T^c$ defined as follows
\[ T^c = \{ \omega\in\Omega, \exists m \in \mathcal{M}_n, \|G_m^{-1}\|_{op} > 2\|\hat{G}_m^{-1}\|_{op} \}.\]
Thus we have that 
\[ \Proba(T^c) \leq \sum_{m\in\mathcal{M}_n} \Proba( \|G_m^{-1}\|_{op} > 2\|\hat{G}_m^{-1}\|_{op}).\]
But for any $m$ we know that $\|G_m^{-1}\|_{op} \leq \|G_m^{-1} - \hat{G}_m^{-1}\|_{op} + \|\hat{G}_m^{-1}\|_{op}$. 
So if $\|G_m^{-1}\|_{op}> 2\|\hat{G}_m^{-1}\|_{op}$ then we have $\|G_m^{-1} - \hat{G}_m^{-1}\|_{op} > \|\hat{G}_m^{-1}\|_{op} \geq \sigma_0\|G_m^{-1}\|_{op}$ where last inequality is obtained using \eqref{eq: minoration norme op hatGm}.
Thus we have
\[ A_{12} \leq 2(n^2+\|f\|_2^2)\sum_{m\in\mathcal{M}_n}\Proba(\|G_m^{-1} - \hat{G}_m^{-1}\|_{op} > \sigma_0\|G_m^{-1}\|_{op}).\]
Finally using Lemma~\ref{lemme pour la fin} with $\alpha = \sigma_0 \leq 1$ we have
\[ \Proba(\|G_m^{-1} - \hat{G}_m^{-1}\|_{op} > \sigma_0\|G_m^{-1}\|_{op}) \leq \Proba\left(\Xi_{\sigma_0^2}^c\right) \leq 6e^{-2n\frac{(\sigma_0)^4}{36}} \leq \frac{C_3}{n^4},\]
using Lemma~\ref{Lemme} for the penultimate upper bound.
So we have
\[ A_{12} \leq 2(n^2+\|f\|_2^2)|\mathcal{M}_n|\frac{C_3}{n^4} \leq \frac{C_4}{n}.\]
In consequence,  we eventually obtain for all $m\in \mathcal{M}_n$
\begin{align*}
\Esp(\|\tilde{f}_{\hat{m}^*} - f\|^2_2)  &\leq  \frac{2(x+2)}{\sigma_0(x-4)}\|f_m - f\|_2^2 + \frac{8 x}{\pi\sigma_0^2(x-4x)}\frac{\kappa D_m}{n}  \\
&\quad +\left( \frac{2x^2 C}{\sigma_0(x-4)}  + \frac{2 \|f\|_2^2}{\pi\sigma_0^2} + C_2 + C_4\right)\frac{1}{n}\\
&\leq K(\sigma_0) \left( \|f_m - f\|_2^2 + \frac{\kappa D_m}{n}  \right) + \frac{\tilde{K}(\sigma_0, \|f\|_{\infty})}{n}.
\end{align*}
This means
\[ \Esp(\|\tilde{f}_{\hat{m}^*} - f\|^2_2) \leq K(\sigma_0) \inf_{m\in\mathcal{M}_n}\left(\|f-f_m\|_2^2 + \kappa\frac{D_m}{n}\right) + \frac{\tilde{K}(\sigma_0, \|f\|_\infty)}{n},\]
which is the desired result. 

\end{proof}

\section*{Bibliography}

\printbibliography[heading=none]

\end{document}